\documentclass[11pt]{article}
\usepackage{parskip}
\usepackage{amsmath,amsfonts,amssymb,amsthm,mathtools,bm}
\usepackage{bbold}
\usepackage{enumitem}   
\usepackage[T1]{fontenc}
\usepackage[margin=2.54cm]{geometry}
\usepackage{setspace}
\usepackage{graphicx}
\usepackage{standalone}
\usepackage{float}
\usepackage{thmtools}
\usepackage{tikz}
\usepackage{tikzstyle}
\usepackage{stmaryrd}
\usepackage{subcaption}
\usepackage{url}
\usepackage[linktoc=all,hidelinks,colorlinks,unicode=true]{hyperref} 
\usepackage[capitalise,compress,nameinlink,noabbrev]{cleveref} 
\hypersetup{linkcolor={blue!70!black},citecolor={blue!70!black},urlcolor={blue!70!black}}
\usepackage{authblk}

\DeclareMathOperator{\E}{\mathbb{E}}
\DeclareMathOperator{\Prob}{\mathbb{P}}

\newcommand{\R}{\mathbb{R}}

\newcommand{\norm}[1]{\left\lVert#1\right\rVert}
\newcommand{\defn}[1]{\emph{#1}} 
\newcommand{\bigO}[0]{O} 
\DeclarePairedDelimiter\abs{\lvert}{\rvert}

\newcommand{\vecf}{h}

\newtheorem{theorem}{Theorem}[section]
\newtheorem{claim}[theorem]{Claim}
\newtheorem{lemma}[theorem]{Lemma}
\newtheorem{conjecture}[theorem]{Conjecture}
\newtheorem{problem}[theorem]{Problem}

\newtheorem{corollary}[theorem]{Corollary}

\newenvironment{poc}{\begin{proof}}{\end{proof}}

\usepackage[backend=biber, style=numeric, firstinits=true, url=false,eprint=false,maxbibnames=99,doi=true]{biblatex}
\title{Colour-balanced subgraphs}
\author{
Emma Hogan\footnote{Mathematical Institute, University of Oxford, Oxford OX2 6GG, UK,\\ \texttt{\{emma.hogan, alexander.scott, dmitry.tsarev\}@maths.ox.ac.uk}} \footnote{Supported by EPSRC grant EP/W524311/1.}\qquad
Alex Scott\protect\footnotemark[1] \footnote{Supported by EPSRC grant EP/X013642/1.}\qquad
Dmitry Tsarev\protect\footnotemark[1]
\footnote{Supported by EPSRC grant EP/W523781/1.}}
\date{\today}

\begin{document}

\maketitle

\begin{abstract}
    A $k$-edge-coloured graph is \defn{colour-balanced} if each colour appears equally often. Resolving a conjecture of Pardey and Rautenbach, we show that any colour-balanced $k$-edge-coloured complete graph $K_{2kt}$ contains a perfect matching that can be made colour-balanced by recolouring $O(k^2)$ edges. More generally, we obtain analogous bounds for arbitrary bounded-degree spanning subgraphs of edge-coloured complete graphs and for perfect matchings in edge-coloured $r$-uniform complete hypergraphs in a more general vector-label setting. The former result answers a question recently posed by Banerjee and Hollom, and significantly improves earlier bounds for all previously studied classes of subgraph. Our proofs reduce each of these problems to a setting in which we can apply a bound for perfect matchings in the complete bipartite graph, established via a linear relaxation and a necklace-splitting argument.
\end{abstract}
 
\section{Introduction}

A recurring theme in extremal combinatorics is the problem of identifying substructures that are, in some sense, representative of a larger combinatorial object. In graph theory, one formulation of this problem is as follows: given an edge-labelled graph, can one find a subgraph of a prescribed type whose label distribution is close to that of the host graph? Questions of this type have been studied in the context of $k$-edge-coloured complete graphs for a range of substructures, including perfect matchings (\cite{caro2022zero,Ehard2020,hollom2025uniform,kittipassorn2020,pardey2022}), spanning forests (\cite{caro2022zero,hollom2024,mohr2022zero,pardey2023efficiently}), and factors (\cite{banerjee2026}), with various bounds obtained on how closely such a subgraph can reflect the global distribution of colours; however, many natural problems remain open. The aim of this paper is to improve known bounds on a range of such problems. 

Problems on representative subgraphs have their roots in zero-sum Ramsey theory. The underlying question in this field is to find a copy of a fixed subgraph $H$ in a labelled clique $K_n$ such that the sum of the edge labels over the copy of $H$ is zero. When $K_n$ is labelled by colours $\{-1,1\}$, a zero-sum subgraph corresponds to a subgraph with an equal number of edges of each colour, and is therefore representative of a host graph with an equal number of edges of each colour. Variants of the problem where $E(K_n)$ is labelled by some finite abelian group have also received considerable attention. For a survey of early developments on zero-sum graph problems, see~\cite{caro1996}, and for early work on almost zero-sum spanning subgraph problems see~\cite{caro2016zero}.

Inspired by early work of Bialostocki and Dierker~\cite{bialostocki1992} on zero-sum matchings, Caro, Hansberg, Lauri and Zarb~\cite{caro2022zero} asked whether it was always possible to find a zero-sum perfect matching in a zero-sum complete graph $K_{4n}$ with edges labelled by $\{-1,1\}$. The question was answered affirmatively by Ehard, Mohr and Rautenbach~\cite{Ehard2020} and independently by Kittipassorn and Sinsap~\cite{kittipassorn2020}. Kittipassorn and Sinsap further initiated the study of so-called colour-balanced subgraphs with more than $2$ colours.

Let $G$ be a graph and let $c \colon E(G) \to [k]$ be a $k$-edge-colouring of $G$. We call both $G$ and $c$ \defn{colour-balanced} if $\abs{c^{-1}(i)} = \abs{c^{-1}(j)}$ for each $i, j \in [k]$. Motivated by the work of Caro, Hansberg, Lauri and Zarb, Kittipassorn and Sinsap~\cite{kittipassorn2020} asked whether, for any positive integer $t$, every colour-balanced $k$-edge-colouring of the complete graph $K_{2kt}$ admits a colour-balanced perfect matching $M$. This question was answered negatively by Pardey and Rautenbach \cite{pardey2022}, who found an example of a $3$-edge-coloured $K_6$ admitting no colour-balanced perfect matching. Instead, they asked for a bound on the colour imbalance of an optimal perfect matching. Define 
\[
    f_c(M) = \sum_{i=1}^k \abs{\abs{c^{-1}(i) \cap M} - t}
\]
to be the total deviation from being colour-balanced across all colour classes, noting that in a colour-balanced perfect matching of $K_{2kt}$, every colour appears exactly $t$ times. Pardey and Rautenbach proved that, when $k=3$, there exists a perfect matching $M$ of $K_{6t}$ satisfying $f_c(M) \leq 2$, and for general $k$ they conjectured the following. 
\begin{conjecture}[Conjecture 1, \cite{pardey2022}]
    For all integers $k \geq 2$ and $t \geq 1$, every colour-balanced $k$-edge-coloured $K_{2kt}$ admits a perfect matching $M$ satisfying $f_c(M) = \bigO(k^2)$.
\end{conjecture}
Making progress towards this conjecture, they found that every colour-balanced $k$-edge-colouring of $K_{2kt}$ admits a perfect matching $M$ satisfying $f_c(M) \leq 3k\sqrt{kt\log 2k}$. Further recent progress by Hollom~\cite{hollom2025uniform} removed the dependence on $t$ from this bound, establishing the existence of a perfect matching $M$ satisfying $f_c(M) \leq 4^{k^2}$. In the following result, we let $n=kt$ for simplicity. Our first result improves on the bound of Hollom and resolves the conjecture of Pardey and Rautenbach.
\begin{theorem} \label{thm:complete_colours}
    Let $c$ be a colour-balanced $k$-edge-colouring of $K_{2n}$. Then there exists a perfect matching $M$ of $K_{2n}$ satisfying 
    \[ f_c(M) = \bigO(k^2). \]
\end{theorem}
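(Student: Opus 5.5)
We follow the strategy outlined in the abstract: reduce to perfect matchings in a complete bipartite graph, and handle that case with a linear relaxation followed by necklace splitting.

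\emph{Step 1: the bipartite case.} Label each edge $e$ by the vector $\vecf(e)=\mathbb{1}_{c(e)}-\tfrac1k\mathbb{1}\in\R^k$. These vectors have norm at most $\sqrt2$, and a perfect matching $M$ of $K_{2n}$ satisfies $f_c(M)=\norm{\sum_{e\in M}\vecf(e)}_1$. The bipartite claim is the following: for any labelling of $E(K_{m,m})$ by vectors of bounded norm in a $d$-dimensional space, some perfect matching $M$ has $\sum_{e\in M}\vecf(e)$ within $O(d)$ (in $\ell_1$) of $m$ times the average label.

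The plan for this claim has two parts.
\begin{itemize}
\item \emph{Linear relaxation.} The uniform fractional matching $x_e=1/m$ attains the average exactly. By Birkhoff's theorem, write $J/m$ as a convex combination of permutation matrices. Using Carathéodory on the $d$ linear constraints, choose a combination with at most $d+1$ permutations $\sigma_0,\dots,\sigma_d$ with weights $\lambda_j$.
\item \emph{Necklace splitting (rounding).} Arrange the rows $1,\dots,m$ along a line. Row $i$ under $\sigma_j$ contributes $\vecf(i,\sigma_j(i))$. We need to select, for each row, one permutation, roughly a $\lambda_j$-fraction of rows for each $j$, so that the sum is balanced. Selecting row $i$ for $\sigma_j$ however destroys bijectivity.
\end{itemize}
To avoid this, I would instead exploit the cycle structure. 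Take the relaxation to be two perfect matchings $\sigma,\tau$ whose symmetric difference is a union of alternating cycles. Along each cycle we can switch contiguous arcs, and a necklace-splitting (Alon / Goldberg–West) argument with $d$ "colours" (the coordinates) gives a mixed matching using only $O(d)$ cuts. Each cut creates an $O(1)$ defect that must be repaired, costing $O(1)$ per cut.

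Iterating over the $d+1$ matchings in the convex combination, halving the number of matchings at each stage, gives an $O(d)$ total error per level. Using geometric weighting or performing the splitting simultaneously should keep the total at $O(d)$ rather than $O(d\log d)$.

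For colours, $d=k$, and the $\ell_1$ error from $O(k)$ local defects of $\ell_1$ size $O(1)$ each is at most $O(k)$. The count of defects could, however, become $O(k^2)$ once repairs cascade. This is still fine, since our target is $O(k^2)$.

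\emph{Step 2: reduction from $K_{2n}$ to $K_{n,n}$.} We need a bipartition $V=A\cup B$ with $|A|=|B|=n$ such that the cross edges have colour distribution within $O(k^2)$-total-error of balanced after scaling by $n/\binom{2n}{2}$. Split $V$ into pairs $\{u_i,v_i\}$ and choose, for each pair, which vertex goes to $A$. The cross-edge colour counts then form a quadratic function of the choices. Alternatively, choose the bipartition by a random halving and then fix it using the same necklace/discrepancy argument on vertex labels: a vertex's label is its colour-degree vector, of norm $O(n)$.

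This is the main obstacle. A random split has error of order $n^{3/2}$, not $O(1)$, and the quadratic dependence on the choices is hard to control. So rather than balancing the bipartition exactly, I would keep the matching step flexible. Combine the perfect matching of $K_{n,n}$ with the inside edges: apply Step 1 to a weighted relaxation of all of $K_{2n}$. The uniform fractional perfect matching $1/(2n-1)$ on all edges is exactly balanced. Using the Edmonds matching polytope, write it as a convex combination of $O(k)$ perfect matchings, which is again possible by Carathéodory in the $k$-dimensional label space. Then round pairs of matchings by necklace splitting along the alternating cycles of their symmetric difference, exactly as in Step 1.

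This bypasses the bipartition entirely. The remaining difficulty is to show that each rounding between two matchings costs $O(k)$ and that the merging tree has total cost $O(k^2)$. With $O(k)$ matchings merged pairwise, each merge incurs $O(k)$ error, giving $O(k^2)$ in total, which matches the statement.
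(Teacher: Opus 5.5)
Your final route is correct, but it is not the paper's route.

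\textbf{The bipartition you abandoned.} The paper keeps exactly this step, and your estimate for a random split is wrong. For an equipartition the cross-edge sum $\sum_{uv}h(uv)\tfrac{1-x_ux_v}{2}$ (with $x_v=\pm1$) has no linear term. The second-moment computation in \cref{lem:vec-partition} shows the deviation is $O(n)$ in $\ell_2$, i.e.\ $O(\sqrt{k})$ in $\ell_1$ at matching scale, not $n^{3/2}$. The paper then proves \cref{thm:vec-bipartite} via a relaxation whose fractional part has cyclomatic number at most $k$. It splits this into $O(k)$ paths carrying \emph{different} proportions $\alpha_i$ and pays $O(k)$ per path, for $O(k^2)$ in total.

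\textbf{Your route.} You apply Carath\'eodory to the points $h(M)\in\R^k$, obtaining at most $k+1$ perfect matchings whose weighted label sum is exactly the target. Note that Carath\'eodory applies to these image points, not to the fractional matching itself, whose support is all of $E(K_{2n})$. Edmonds is also unnecessary: by symmetry $\tfrac{1}{2n-1}h(K_{2n})$ is the average of $h(M)$ over all perfect matchings. You then merge two matchings at a time along the alternating cycles of their symmetric difference.

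\textbf{What the merge needs.} For a merge to cost $O(k)$ you need an $\alpha$-splitting theorem for signed densities of Stromquist--Woodall type (the paper's \cref{thm:necklaceGeneral}). It must be applied with $2k$ charges, one family for the edges of each of the two matchings. Alon's theorem and Goldberg--West only give equal splits. The defects are $O(1)$ per cut, plus at most one per cut cycle at its wrap-around. In $K_{2n}$ the leftover vertices can be paired arbitrarily, so nothing cascades.

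\textbf{Total cost.} Each merge's error enters the final sum multiplied by the total weight of the matchings being merged. So $k$ sequential merges give $O(k^2)$. In a balanced merge tree the weights at each level sum to $1$, giving $O(k)$ per level over $O(\log k)$ levels, i.e.\ $O(k\log k)$.

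\textbf{Comparison.} Your route is shorter and avoids both the second-moment lemma and the cycle-basis argument. Because each merge uses a single proportion across all cycles, it also gives a stronger bound. It works verbatim in $K_{n,n}$, since leftover vertices are balanced between the sides. It could therefore replace \cref{thm:vec-bipartite} in the paper's other applications. Your unsupported claims of $O(d)$ in Step~1 and via `geometric weighting' are not needed and should be dropped.
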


In fact, our proof techniques allow us to obtain much stronger results for representative subgraphs in the context of vector-labelled graphs. For a graph $G$, a subgraph $G'$ of $G$, and a function $\vecf\colon E(G)\to\R^k$ assigning vectors to the edges of $G$, we denote by $\vecf(G')$ the sum $\sum_{e \in E(G')} \vecf(e)$. We say that $G'$ is \defn{representative} of $G$ if $\vecf(G')/e(G')$ is equal to $\vecf(G)/e(G)$. Analogously to the definition of $f_c$, we define
$$f_\vecf(G') = \left\|\vecf(G') - \frac{e(G')}{e(G)}\vecf(G)\right\|_1.$$
Thus $f_\vecf$ measures how far from representative a subgraph $G'$ of $G$ is. Note that by translating each vector by a constant amount so that $\vecf(G) = 0$, we may equivalently view the problem of finding a representative subgraph as a problem in zero-sum Ramsey theory over $\mathbb{R}^k$. The following theorem shows that, under mild assumptions on $\vecf$, complete graphs admit almost representative perfect matchings.
\begin{restatable}{theorem}{vecComplete}\label{th:vec-complete}
   Let $\vecf \colon E(K_{2n}) \to \mathbb{R}^k$ such that $\norm{\vecf(e)}_1\leq 1$ for all $e\in E(K_{2n})$. Then there is a perfect matching $M$ of $K_{2n}$ satisfying 
   \[
        f_\vecf(M) = \bigO(k^2).
   \]
\end{restatable}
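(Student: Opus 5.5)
Following the strategy announced in the abstract, I will reduce to perfect matchings in a complete bipartite graph and then handle that case by a linear relaxation plus necklace splitting. First normalise: replacing $\vecf(e)$ by $\vecf(e)-\vecf(K_{2n})/e(K_{2n})$ only changes the $\ell_1$ bound by a factor $2$, and makes the target $\vecf(M)=0$. So we may assume $\vecf(K_{2n})=0$ and $\norm{\vecf(e)}_1\le 2$, and we look for a perfect matching with $\norm{\vecf(M)}_1=\bigO(k^2)$.

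\emph{Step 1 (bipartite lemma).} The key intermediate statement is this. For $\vecf\colon E(K_{m,m})\to\R^k$ with $\norm{\vecf(e)}_1\le 1$, some perfect matching $M$ satisfies
\[
\norm{\vecf(M)-\tfrac1m\vecf(K_{m,m})}_1=\bigO(k^2).
\]
The target is the value at the uniform fractional perfect matching $x\equiv 1/m$. Consider the polytope of fractional perfect matchings $x$ with $\vecf(x)=\vecf(K_{m,m})/m$. A vertex of this polytope has support forming a graph with at most $2m-1+k$ edges, because a vertex is determined by tight constraints and there are only $k$ extra equalities beyond the Birkhoff ones. Hence the support is a forest plus $\bigO(k)$ extra edges. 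Removing the integral edges, the fractional part consists of vertices of degree $\ge2$, so it has few branch vertices and splits into $\bigO(k)$ long paths and cycles of fractional edges. Along each such path or cycle, alternately shift weight $\pm\varepsilon$; this preserves the degree constraints. The task is then to round all of these simultaneously while keeping the $k$ linear constraints almost satisfied. I would model each alternating path or cycle as a necklace whose beads are labelled by the vector difference between its two alternating matchings. Then I would apply necklace splitting (Alon's theorem, in its $k$-measure continuous form with $\bigO(k)$ cuts) to choose, piecewise, which alternating class to take. Each cut introduces an error of $\bigO(1)$ per coordinate, i.e.\ $\bigO(k)$ in $\ell_1$ after repair. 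With $\bigO(k)$ cuts this gives $\bigO(k^2)$ in total.

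\emph{Step 2 (reduction from $K_{2n}$ to $K_{n,n}$).} Choose a random equipartition $V=A\cup B$ and let $\vecf'$ be $\vecf$ restricted to $K[A,B]$. We need $\vecf(K[A,B])/n$ to be within $\bigO(k^2)$ of $\frac{n}{e(K_{2n})}\vecf(K_{2n})=0$ in $\ell_1$. A random split only gives fluctuations of order $\sqrt n$ in $\vecf(K[A,B])$, i.e.\ $\bigO(1/\sqrt n)$ after dividing by $n$ for each coordinate, so even a random partition likely suffices. To be safe, I would instead average: since the expectation over equipartitions of $\vecf(K[A,B])$ equals $\frac{n^2}{\binom{2n}{2}}\vecf(K_{2n})=0$, some partition has $\norm{\vecf(K[A,B])}_1/n=\bigO(k)$. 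This follows by a discrete continuity argument (swap vertices one at a time, each swap changing the sum by $\bigO(n)$) combined with a further application of necklace-type splitting over the $k$ coordinates. Applying Step 1 to $K[A,B]$ then gives a perfect matching of $K_{2n}$ with error $\bigO(k^2)+\bigO(k)$.

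\emph{Main obstacle.} I expect the rounding in Step 1 to be the main difficulty. The structural bound on the support of an extreme point is needed to ensure only $\bigO(k)$ fractional components, and the necklace-splitting must be arranged so that the number of cuts, rather than the number of components, controls the error. My first attempt would be to concatenate all fractional cycles into one necklace, apply the $k$-colour, $2$-thief splitting with at most $k$ cuts, and repair each cut locally at $\bigO(k)$ cost.
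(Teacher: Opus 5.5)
\textbf{The genuine gap is in Step~2.} Knowing that $\E\,\vecf(K[A,B])=0$ over random equipartitions only says that $0$ lies in the convex hull of the achievable vectors. It does not give a partition near $0$.

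Your swap argument is an intermediate-value argument, so it only works for $k=1$. For $k\ge 2$, a chain of points with steps of size $\bigO(n)$ can wind around the origin at distance far larger than $kn$ while still having $0$ in its convex hull.

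Necklace splitting does not rescue this. The quantity $\vecf(K[A,B])=\sum_{u\in A,\,v\in B}\vecf(uv)$ is quadratic rather than additive in the indicator of $A$. It is also invariant under $A\leftrightarrow B$, so there is no odd map for a Borsuk--Ulam argument.

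Your heuristic for random partitions is off in scale as well. The cut has $\Theta(n^2)$ edges, and its typical fluctuation is of order $n$, not $\sqrt n$. That is $\Theta(1)$ per coordinate after dividing by $n$, not $\bigO(1/\sqrt n)$.

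What is needed, and what the paper does in \cref{lem:vec-partition}, is an explicit second moment computation. Let $X=\vecf(K[A,B])$, let $p$ be the probability that an edge is cut, and let $p_s$ be the probability that $e'$ is cut given that $e$ is cut and $|e\cap e'|=s$. Split $\sum_{e,e'}\E X_eX_{e'}$ according to $s\in\{0,1,2\}$. Then:
\begin{itemize}
\item the normalisation $\vecf(K_{2n})=0$ kills the term $p\,p_0\norm{\sum_e\vecf(e)}_2^2$;
\item the term $\sum_v d_v^2$, with $d_v=\sum_{e\ni v}\vecf(e)$, has the negative coefficient $p(p_1-p_0)$;
\item what remains is at most $p\,p_0\sum_e\norm{\vecf(e)}_2^2=\bigO(n^2)$.
\end{itemize}
So some equipartition has $\norm{X}_2=\bigO(n)$, hence $\norm{X}_1/n=\bigO(\sqrt k)$, which is more than enough.

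\textbf{Step~1 follows the paper's route but needs three repairs.}
\begin{itemize}
\item The bound of $2m-1+k$ support edges does not control the fractional part when there are many integral edges, since each integral edge is its own component. You need the per-component rank count, i.e.\ that the fractional support has cyclomatic number at most $k$. This is exactly \cref{lem:linear_relaxation}.
\item Alternating shifts along a segment that ends at branch vertices break the degree constraints there. Instead, delete the $\bigO(k)$ edges at branch vertices to get vertex-disjoint paths with weights alternating $\alpha_i,1-\alpha_i$. Round each path to a genuine matching, then re-match the $\bigO(k^2)$ leftover vertices arbitrarily.
\item The $\alpha_i$ differ between paths, so a single $2$-thief split of one concatenated necklace does not work; it only realises proportion $\frac12$. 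You need a proportion-$\alpha_i$ split with $\bigO(k)$ cuts on each path separately (\cref{thm:necklaceGeneral}).
\end{itemize}
The last repair gives $\bigO(k^2)$ cuts in total. The bound then closes only because each cut disturbs $\bigO(1)$ edges of $\ell_1$-norm at most $1$, so it costs $\bigO(1)$ in $\ell_1$, not the $\bigO(k)$ you charged.
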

To see that this is indeed a more general setting than the edge-colouring problem, let $b_1,\dotsc,b_k$ denote the standard basis of $\R^k$. Given a $k$-edge-coloured graph $G$, let $\vecf$ assign each edge $e$ of colour $i$ the vector $b_i$. Then when $G$ is colour-balanced, a perfect matching $M$ of $G$ satisfies $f_\vecf(M) = f_c(M)$, and in general, a subgraph is representative exactly when it contains the expected proportion of the edges in each colour class. Furthermore, when $G$ is not colour-balanced, our theorem gives the same quadratic error bound for representative perfect matchings of graphs with unbalanced colour-counts. 

Various authors have also considered bounds for other colour-balanced subgraphs. Hollom \cite{hollom2025uniform} asked whether, for an $r$-vertex graph $F$, it is always possible to find an $F$-factor $H$ in a colour-balanced $K_n$ such that $f_c(H)$ is bounded by a function of only $F$ and $k$. This question was answered affirmatively by Banerjee and Hollom \cite{banerjee2026}, who obtained a bound of $f_c(H) \leq (8rk)^{(8rk)^{k}}$. When $k=2$, there has also been considerable research on embedding spanning forests. For any fixed spanning forest $F$, it is known that $f_c(F)$ is always bounded by an error that is linear in $\Delta$, with successive improvements to the constant achieved by Mohr, Pardey and Rautenbach \cite{mohr2022zero} and Pardey and Rautenbach \cite{pardey2023efficiently}. Hollom, Mond and Portier \cite{hollom2024} achieved the current best bound of $\frac{\Delta}{2} + 18$, which is tight up to the additive constant. Furthermore, when the restriction to a fixed isomorphism class is dropped, Caro, Hansberg, Lauri and Zarb \cite{caro2022zero} showed that when $k=2$, every colour-balanced complete graph admits a colour-balanced spanning tree.

Our main result is a bound for a far more general class of graphs, which generalises and significantly improves upon the previous known bounds for balanced subgraphs with $k$ colours. The following theorem applies to arbitrary spanning subgraphs with bounded maximum degree in our more general vector setting, and fully resolves a question of Banerjee and Hollom \cite{banerjee2026}. 

\begin{restatable}{theorem}{coolbounds}\label{thm:bounded_deg_bound}
    Let $H$ be an $n$-vertex graph with maximum degree $\Delta$, and let ${\vecf : E(K_n) \to \mathbb{R}^k}$ be such that $\|\vecf(e)\|_1 \leq 1$ for all $e \in E(K_n)$. Then there is a copy $H'$ of $H$ in $K_n$ satisfying
    $$f_\vecf(H') = \bigO(\Delta^2k^2\sqrt{\log2\Delta}).$$
\end{restatable}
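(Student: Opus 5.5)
The approach is to reduce to a bipartite perfect-matching problem, as the abstract indicates. The tool I assume is a bipartite lemma: for any $\vecf$ on $E(K_{m,m})$ with $\norm{\vecf(e)}_1\le 1$, there is a perfect matching $M$ with $f_\vecf(M)=O(k^2)$ relative to the average over $K_{m,m}$. This would be proved by writing the uniform fractional matching $\frac1m J$ as a convex combination of permutation matrices (Birkhoff), taking a vertex of the polytope cut out by the $k$ linear constraints, which has few fractional coordinates, and rounding by necklace splitting. More precisely, the plan for that step is to find $O(k)$ matchings whose average is exactly representative, and then to use a necklace-splitting/rounding argument to select one of them, losing $O(k^2)$ in total.

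\textbf{Step 1 (random colouring).} Properly edge-colour $H$ with $q\le \Delta+1$ colours (Vizing), so that $E(H)=M_1\cup\dots\cup M_q$ with each $M_j$ a matching. Embed $H$ by a uniformly random bijection $\sigma\colon V(H)\to V(K_n)$. The idea is to fix the embedding of "most" of the structure and then re-embed in a controlled way so that each $M_j$ behaves like a perfect matching between two sets to which the bipartite lemma applies.

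\textbf{Step 2 (bipartite re-embedding).} Concretely, I would partition $V(H)$ into classes and choose an arrangement in which, for each colour class $M_j$, we may swap endpoints. Take an independent set $I$ in $H^2$ of size about $n/(\Delta^2+1)$, and the corresponding image set $S=\sigma(I)$. Permuting $\sigma$ on $I$ arbitrarily does not create conflicts, because no two vertices of $I$ share a neighbour. The total label then becomes a linear assignment cost: placing vertex $u\in I$ at $s\in S$ contributes $\vecf(s,N(u))$, which has $\ell_1$ norm at most $\Delta$. This is an assignment problem on $K_{|I|,|I|}$ with vector weights of norm at most $\Delta$. The bipartite lemma, scaled by $\Delta$, yields a permutation whose deviation from the average over all permutations of $I$ is $O(\Delta k^2)$.

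\textbf{Step 3 (matching the global average).} It remains to ensure that the average over permutations of $I$ is close to $\frac{e(H)}{e(K_n)}\vecf(K_n)$. To do this I would iterate: cover $V(H)$ by $O(\Delta^2)$ such classes $I_1,\dots,I_p$, taken from a proper colouring of $H^2$ with equitable sizes, and choose $\sigma$ randomly. Concentration (Azuma/McDiarmid for random bijections) gives a random embedding whose error is $O(\sqrt{n\Delta}\cdot\text{stuff})$, which is far too large. So instead I would correct sequentially. After treating class $I_i$, its contribution equals its conditional expectation given the other classes, up to $O(\Delta k^2)$. Telescoping, the final error is the sum of these per-class errors, $O(p\Delta k^2)=O(\Delta^3k^2)$, plus a martingale-type drift. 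To reach $\Delta^2k^2\sqrt{\log 2\Delta}$, I would instead use classes from a defective colouring. By Lovász Local Lemma-type partitioning, we can take $O(\Delta)$ classes in which each vertex has $O(\log \Delta)$ neighbours inside its class. Handling the small intra-class degree costs the $\sqrt{\log\Delta}$ factor, giving $O(\Delta)$ classes times $O(\Delta k^2\sqrt{\log\Delta})$.

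\textbf{Main obstacle.} The main obstacle is Step 3: showing that sequential re-embedding truly telescopes to the global average rather than to a conditional one that drifts. I would handle this by exactly tracking the expected value under a uniformly random $\sigma$, which is exactly representative by symmetry. I would then argue via a potential function equal to the conditional expectation of the remaining classes under random extension, which is linear in each class's assignment and hence fits the bipartite lemma.
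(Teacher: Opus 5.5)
Your Step 3 has a genuine gap. The potential you describe cannot be both exactly representative at the start and linear in each class's assignment.

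If later classes may go anywhere among the unused vertices, its initial value is indeed $\mu=\frac{e(H)}{e(K_n)}\vecf(K_n)$. But the expected contribution of edges meeting later classes then depends on the entire image set chosen for the current class. So each step is a quadratic, not a linear, assignment problem, and the bipartite theorem does not apply.

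If instead the image sets $S_i=\sigma(I_i)$ are frozen by the initial random $\sigma$, as in your Step 2, the potential is linear. But it telescopes only to $\mu^{pw}$, the average of $\vecf(H)$ over embeddings mapping each $I_i$ onto $S_i$. This equals $\mu$ only in expectation over $\sigma$, not ``exactly by symmetry'' for the partition you actually have. Controlling $\mu^{pw}-\mu$ is the heart of the paper's proof.

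After normalising so that $\vecf(K_n)=0$, a second-moment computation over random partitions with $|S_i|=|I_i|$ gives $\E\norm{\mu^{pw}}_2^2=\bigO(\Delta^2p+C^2n)$. Here $C^2=\sum_i\frac{|I_i|}{n}(d_i-d)^2$, where $d_i$ is the average degree of $I_i$ and $d$ that of $H$. Without degree-uniform classes this error really can be of order $\sqrt{n}$. Take $H$ a forest of stars $K_{1,\Delta}$ with $\Delta\geq 2$ and all centres in one class (an equitable colouring of $H^2$ allows this). Label edges inside one half of $V(K_n)$ by $+1$ and inside the other by $-1$. Then $|\mu^{pw}|$ is typically of order $\sqrt{\Delta n}$, and your sequential correction, which only tracks $\mu^{pw}$, cannot remove this.

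So you need a colouring of $H$ into $\bigO(\Delta)$ independent sets whose average degrees are all close to $d$. The paper takes a uniformly random proper $3\Delta$-colouring. It uses Chatterjee's concentration inequality for random proper colourings, with a union bound over $6\Delta$ events, to get $C=\bigO(\Delta^2n^{-1/2}\sqrt{\log 2\Delta})$. This, via the term $C\sqrt{kn}=\bigO(\Delta^2\sqrt{k\log 2\Delta})$, is where $\sqrt{\log 2\Delta}$ comes from.

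Your choice of classes is also off. Linearity requires independence in $H$, not in $H^2$: shared neighbours are harmless because their images are already fixed. In the paper, with the $S_j$ fixed, edges from $I_i$ to earlier classes are determined. Edges to later classes are replaced by their expectation over a uniformly random extension. So placing $a\in I_i$ at $b\in S_i$ costs $\sum_{w}\vecf(b\varphi(w))+\sum_{j>i}\frac{|E(\{a\},I_j)|}{|S_j|}\sum_{v\in S_j}\vecf(bv)$, where the first sum runs over already-embedded neighbours $w$ of $a$. This vector has $\ell_1$-norm at most $\Delta$, so the bipartite theorem costs $\bigO(\Delta k^2)$ per class and $\bigO(\Delta^2k^2)$ over $3\Delta$ classes.

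Colouring $H^2$ wastes a factor of $\Delta$. The defective colouring is unnecessary, since a proper $(\Delta+1)$-colouring of $H$ already gives $\bigO(\Delta)$ classes with no intra-class edges. If its classes do contain edges of $H$, the cost becomes quadratic in the permutation and the bipartite theorem no longer applies. Nothing in your sketch supports the claim that this costs only a $\sqrt{\log\Delta}$ factor.
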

For certain graphs $H$, we are also able to improve this bound. When $H$ is an $F$-factor for some graph $F$ on $r$ vertices, we obtain an error bound of $\bigO(\Delta r k^2)$, which is at worst $\bigO(r^2 k^2)$, further improving the recent $(8rk)^{(8rk)^{k}}$ bound of Banerjee and Hollom. We also improve the bound in the case where $H$ is a spanning forest. We consider spanning forests in our more general vector setting with arbitrary $k \geq 2$ and obtain an error of $\bigO(\Delta k^2)$, showing that the error remains linear in $\Delta$ when $k \geq 3$. This is the first bound for forests with more than $2$ colours.
\begin{restatable}{theorem}{forestError}\label{thm:forest_error}
    Let $F$ be an $n$-vertex forest with maximum degree $\Delta$, and let $\vecf : E(K_n) \to \mathbb{R}^k$ such that $\|\vecf(e)\|_1 \leq 1$ for all $e \in E(K_n)$. Then there is a copy $F'$ of $F$ in $K_n$ satisfying 
    \[f_\vecf(F') = \bigO(\Delta k^2).\]
\end{restatable}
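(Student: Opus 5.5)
We prove \cref{thm:forest_error} by reducing it to the bipartite perfect matching bound that the abstract describes, obtained via a linear relaxation and necklace splitting. We take that bound as available in the following form: for any $\vecf\colon E(K_{m,m})\to\R^k$ with $\|\vecf(e)\|_1\le 1$, there is a perfect matching $M$ with $\|\vecf(M)-\frac{1}{m}\vecf(K_{m,m})\|_1=\bigO(k^2)$. The strategy is to build $F'$ in $\bigO(\Delta)$ rounds, where each round is a perfect matching problem between a class of edges of $F$ and a pool of vertices of $K_n$. The per-round errors then add up to $\bigO(\Delta k^2)$.

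\emph{Decomposition of the forest.} Root each component of $F$ and orient every edge from child to parent. Each non-root vertex then owns exactly one edge, namely the edge to its parent. First we embed the roots. Then we process the remaining vertices in an order in which every parent is placed before its children, grouping them into rounds. In a given round, the vertices to be placed form a set $C$, their parents are already embedded at vertices $p(v)$, and the free vertices of $K_n$ form a set $U$ with $|U|\ge |C|$. The problem would be a bipartite matching between $C$ and $U$, with weight $\vecf(p(v)u)$ on the pair $(v,u)$. This is not quite a complete bipartite graph of equal sides: a child $v$ has weights depending only on $p(v)$. That is harmless, since the bipartite lemma only needs a weight function on $C\times U$. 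To equalise the sides we pad $C$ with dummy vertices of weight $0$ when $|U|>|C|$; in the final round $|U|=|C|$, so no padding is needed there.

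\emph{Controlling the drift.} The difficulty is that each round's target is the average weight $\frac{1}{|U|}\sum_{u\in U}\vecf(p(v)u)$ over the currently free vertices. This is not the global average $\vecf(K_n)/e(K_n)$, and earlier choices bias which vertices remain free. To fix this we use the bipartite bound in its exact form: the matching $M$ has weight within $\bigO(k^2)$ of the fractional value of the uniform matching. We then telescope, and compare the sum of the per-round fractional values with the value of a uniformly random embedding of $F$, which equals $\frac{e(F)}{e(K_n)}\vecf(K_n)$.

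To make this comparison possible, we perform the rounds in reverse. At the start we choose a uniformly random ordering of vertex sets. Equivalently, we view the random embedding as successively exposing a random bijection, and at each round we replace the random matching by a near-optimal deterministic one. We use the linear relaxation to keep an exact fractional embedding, a doubly stochastic matrix on the remaining part, and in each round we round one layer of it. Each rounding costs $\bigO(k^2)$ via necklace splitting applied to the layer's vectors. We need $\bigO(\Delta)$ layers. After removing leaves and pairing, the edges of a forest can be partitioned into $\bigO(\Delta)$ star-forests or matchings; for example, a forest is properly $\Delta$-edge-colourable, so its edges split into $\Delta$ matchings. Rounding one matching-layer at a time then gives total error $\Delta\cdot\bigO(k^2)$.

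\emph{Main obstacle.} The main difficulty is making the rounding of one layer compatible with the partial embedding already fixed, without destroying the fractional structure of the remaining layers. The first thing I would try is to make the rounding local. I would order the matchings $F_1,\dots,F_\Delta$ so that the vertices of each $F_i$ not yet embedded form a set $C_i$, with every vertex of $C_i$ having exactly one embedded neighbour inside $F_i$. Then I would round only the assignment of $C_i$ to the free vertices, while keeping every vertex of the remaining pool exchangeable. This keeps all later expected contributions equal to global averages.

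If the matching layers do not admit such an order, I would fall back on depth layers. Placing all children of a parent is a perfect matching problem with padding. We would then group children by their index among their siblings, giving at most $\Delta$ groups per depth. To avoid a dependence on the depth, we combine all depths of the same sibling-index into a single round, and use exchangeability of the unplaced vertices.
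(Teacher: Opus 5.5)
Your proposal has a genuine gap. You correctly identify the two difficulties: keeping the number of rounds small, and stopping the per-round targets from drifting away from $\frac{e(F)}{e(K_n)}\vecf(K_n)$. None of the mechanisms you offer resolves either one.

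\emph{Number of rounds.} Parent-before-child rounds by depth can number $\Theta(n)$, for instance on a path rooted at an end. Merging all depths with the same sibling index does not help: on such a path every non-root vertex is a first child, so each vertex lands in the same round as its parent, and the weights $\vecf(p(v)u)$ are undefined. The decomposition into $\Delta$ matchings does not produce vertex-placement rounds at all, because vertices are shared between layers. On a path with alternately coloured edges, the first layer has both endpoints of essentially every edge unembedded, so the ``exactly one embedded neighbour'' property fails. That layer also covers essentially all vertices, so the second layer is then forced and its error is uncontrolled.

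\emph{Drift.} Your round uses weights that see only the edge to the already-placed parent, padded with zero-weight dummies. It is therefore compared with the average over a random injection into the current free pool $U$. But the choice of image set inside $U$ changes the conditional expectation of everything not yet placed. This includes the edges from $C$ to its children, which depend on where each $v$ sits. It also includes the edges between two unplaced vertices; under a uniform completion, their expectation depends quadratically on the image set. So the per-round $\bigO(k^2)$ guarantees do not telescope. The remarks about reversing rounds, rounding one layer of a doubly stochastic matrix, and exchangeability do not define any object whose one-layer rounding is an instance of \cref{thm:vec-bipartite}.

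The paper's argument (\cref{thm:general_uniformity_error}) supplies the three ingredients you are missing.
\begin{enumerate}
\item It partitions the \emph{vertices} of $F$ into $4$ independent sets $U_1,\dots,U_4$ with $\sum_i\frac{n_i}{n}(d_i-d)^2=\bigO(\Delta^2/n)$. This is \cref{lem:forest_uniformity_fine}, built on the centroid recursion of \cref{lem:forest_recurse}. This forest-specific degree uniformity has no counterpart in your plan.
\item It fixes the target pools in advance. Take a uniformly random partition $V(K_n)=V_1\cup\dots\cup V_4$ with $|V_i|=|U_i|$. By a second-moment computation (\cref{lem:graph-uneven-partition}), the average partwise embedding then lies within $\bigO(\sqrt{k(\Delta^2r+C^2n)})$ of $\frac{e(F)}{e(K_n)}\vecf(K_n)$. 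Uniformity is exactly what makes $C^2n=\bigO(\Delta^2)$ here.
\item It embeds each $U_i$ bijectively onto $V_i$, with no padding, by \cref{thm:vec-bipartite} applied to conditional-expectation weights $\vecf_i(a,b)=\sum_v p_{a,bv}\vecf(bv)$. An edge from $a$ to a later class $U_j$ enters with weight $|E(\{a\},U_j)|/|V_j|$ at every $v\in V_j$.
\end{enumerate}
Because the later pools are fixed, the conditional expectation of $\vecf(F')$ is linear in the current matching, and its average over matchings equals the current conditional mean. Since $\|\vecf_i(q)\|_1\le\Delta$, it moves by only $\bigO(\Delta k^2)$ per class (\cref{thm:arbitrary-embedding}). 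Four classes then give $\bigO(\Delta k^2)$. So the factor $\Delta$ comes from the size of the weights in a constant number of rounds, not from $\Delta$ rounds each costing $\bigO(k^2)$.
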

We also give an extension to colourings of hypergraphs: we obtain a quadratic error-bound for perfect matchings of the $rn$-vertex $r$-uniform complete hypergraph $K_{rn}^{(r)}$.  
\begin{restatable}{theorem}{hypergraph}\label{thm:hypergraph_matching}
    Let $\vecf \colon E(K_{rn}^{(r)}) \to \mathbb{R}^k$ such that $\| \vecf(e) \|_1 \leq 1$ for all $e \in E(K_{rn}^{(r)})$. Then there is a perfect matching $M$ of $K_{rn}^{(r)}$ satisfying
   \[
        f_\vecf(M) = \bigO_r(k^2).
   \]
\end{restatable}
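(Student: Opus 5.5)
We prove the statement by reduction to a bipartite-type matching problem. The abstract indicates the main tool: a bound for perfect matchings in complete bipartite graphs, proved via a linear relaxation and necklace splitting. In the $r$-partite hypergraph setting this reads as follows. If $V_1,\dots,V_r$ are disjoint $n$-sets and $\vecf$ labels the transversal $r$-sets with $\|\vecf\|_1\le 1$, then some perfect matching $M$ has $\|\vecf(M)-\bar{\vecf}\|_1=O_r(k^2)$, where $\bar{\vecf}$ is $n$ times the average label over all transversal edges.

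The plan has two steps.

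\textbf{Step 1: reduce to the $r$-partite problem.} We choose a uniformly random equipartition $V_1,\dots,V_r$ of the $rn$ vertices. Every $r$-set is equally likely to be transversal, so the expected average label over the transversal edges equals the global average $\vecf(K^{(r)}_{rn})/e(K^{(r)}_{rn})$. However, we need the transversal average to be within $O_r(k^2)/n$ of the global average in $\ell_1$, which a single random partition only achieves up to fluctuations of order $1/\sqrt n$. We therefore use a correction, in the same spirit as the paper's proof of \cref{th:vec-complete}. Consider the family of all equipartitions. The transversal average is a function of the partition, and swapping two vertices between parts changes it by $O_r(1/n)$ in $\ell_1$. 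Moving along a path of swaps from one partition to its image under a permutation of parts does not help, since such images have the same transversal set. Instead we compare with the mean: by averaging over all equipartitions, the mean of the transversal averages is exactly the global average. Hence the global average lies in the convex hull of the transversal averages, and by Carathéodory it lies in the hull of $k+1$ of them.

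Here we use a linear relaxation rather than a single partition. Work with a fractional combination of at most $k+1$ pairs (partition, perfect matching from the partite result). Then combine them: split the vertex set into $k+1$ blocks, of sizes proportional to the Carathéodory weights and each divisible by $r$. Applying the partite result inside each block costs $O_r(k^2)$ per block, which gives $O_r(k^3)$ in total, too much.

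\textbf{Step 2: avoid the extra factor of $k$ by a single combined application.} The better route is to generalise the partite lemma itself. We form an auxiliary $r$-partite structure whose parts are copies of the vertex set in the different roles, and apply the LP-plus-necklace argument once. The LP solution has at most $k+O(1)$ fractional "cycles". Necklace splitting with $k$ measures then rounds it using $O(k)$ cuts, and each cut costs $O(k)$ in $\ell_1$ (or $O(1)$ per coordinate), which gives $O_r(k^2)$.

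\textbf{Main obstacle.} The hard part is making the relaxation non-partite. This requires showing that the symmetric fractional perfect matching polytope of $K^{(r)}_{rn}$ (uniform weight on all edges) can be decomposed into a near-integral combination with $O(k)$ fractional components. I would first attempt a fixed random equipartition together with an exact linear correction: choose $O(k)$ vertex swaps, each shifting the transversal average by $O_r(1/n)$, to push the partite average to within $O_r(k)/n$ of the global one. I would then apply the partite bound, which yields $f_\vecf(M)=O_r(k^2)$ after accounting for the $n\cdot O_r(k)/n$ discrepancy.
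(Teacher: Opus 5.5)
Your proposal has a genuine gap: the $r$-partite matching bound is assumed rather than proved, and your reduction from $K^{(r)}_{rn}$ to an $r$-partite subhypergraph rests on a false premise. Neither fix you sketch would work.

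\textbf{The $r$-partite bound.} The statement in your first paragraph does not follow from anything you prove. The only tool available is the bipartite bound, \cref{thm:vec-bipartite}. Your Step~2 idea, rerunning the relaxation-plus-necklace argument on an auxiliary $r$-partite structure, fails for $r\ge 3$, because that argument rests on graph-specific facts. The constraints $\sum_{e\ni v}w(e)=1$ and cyclomatic number at most $k$ force the fractional edges into $O(k)$ paths whose weights alternate between $\alpha$ and $1-\alpha$; this is exactly what \cref{lem:path_bound} rounds. For $r=3$ the fractional perfect matching polytope of the complete $3$-partite $3$-graph is not integral. For $n=2$, weight $\tfrac12$ on the triples $(1,1,2),(1,2,1),(2,1,1),(2,2,2)$ is feasible, yet its support contains no perfect matching. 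So adding $k$ linear constraints cannot force near-integrality, and there is no path structure to split.

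The missing idea is induction on $r$, using the bipartite theorem as a black box:
\begin{enumerate}
\item On $U_1,\dots,U_{r-1}$, use the averaged labels $h_1(e')=\frac1n\sum_{e\supseteq e'}h(e)$, which still have $\ell_1$-norm at most $1$, and find a good perfect matching $M_1$ inductively.
\item Form $K_{n,n}$ between $E(M_1)$ and $U_r$, with labels $h_2(e',u)=h(e'\cup\{u\})$.
\item Since $h_2(K_{n,n})=n\,h_1(M_1)$, \cref{thm:vec-bipartite} gives a perfect matching $M_2$ with $\|h_2(M_2)-h_1(M_1)\|_1=O(k^2)$.
\item The induced hypergraph matching $M$ has $h(M)=h_2(M_2)$, so the errors add up to $O(rk^2)$.
\end{enumerate}

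\textbf{The partition step.} A single uniformly random equipartition does not leave the transversal average off by order $1/\sqrt n$. Because the parts have equal sizes, whether a fixed edge through $v$ is transversal is independent of which part contains $v$. So single vertices contribute no leading-order fluctuation. Concretely, centre $h$ so that $h(K^{(r)}_{rn})=0$, and let $X$ be the sum of $h$ over transversal edges. Expand $\E\|X\|_2^2$ in terms of $D_s=\sum_{|S|=s}\bigl(\sum_{e\supseteq S}h(e)\bigr)^2$. The $D_0$ term vanishes, the $D_1$ term has a nonpositive coefficient, and the remaining terms are $O_r(n^{2r-2})$. Hence some equipartition has $\|X\|_1=O_r(\sqrt k\,n^{r-1})$. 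Scaled to a matching of $n$ edges out of $n^r$ transversal edges, this costs only $O_r(\sqrt k)$. This is the second-moment argument of \cref{lem:vec-partition} and \cref{lem:hyper-partition}, and no correction step is needed.

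Your proposed fixes would not have worked even if the premise were true. Making $O(k)$ swaps, each moving the average by $O_r(1/n)$, moves it by only $O_r(k/n)$, which cannot close a $1/\sqrt n$ gap. The Carath\'eodory-and-blocks construction also fails, because restricting partitions of the whole vertex set to blocks changes their transversal averages.
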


Furthermore, we consider the existence of colour-balanced spanning trees in complete graphs, when the restriction to a fixed isomorphism class for the tree is dropped. Generalising a result of Caro, Hansberg, Lauri and Zarb~\cite{caro2022zero} for $2$-edge-coloured complete graphs, we use a matroid-theoretic argument to determine an exact condition for when $k$-edge-coloured complete graphs admit colour-balanced spanning trees, in terms of the number of edges in each colour class. 
\begin{restatable}{theorem}{colourBalancedTree}\label{thm:colour_balanced_tree}
    Let $t \geq 1$ and let $c$ be a $k$-edge-colouring of $K_{2kt+1}$. Suppose that, for all $j \in [k]$ and every set of $j$ colours in $[k]$, there are strictly more than ${2jt \choose 2}$ edges labelled by those $j$ colours. Then there exists a colour-balanced spanning tree of $K_{2kt+1}$.
\end{restatable}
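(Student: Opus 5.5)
The plan is to phrase the problem as an independent transversal question in the graphic matroid of $K_{2kt+1}$ and apply Rado's theorem. A colour-balanced spanning tree of $K_{2kt+1}$ has $2kt$ edges, hence exactly $2t$ edges of each colour. For $i \in [k]$ let $E_i = c^{-1}(i)$. Consider the family $\mathcal{F}$ of $2kt$ sets obtained by listing each $E_i$ exactly $2t$ times.

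\emph{Step 1 (reduction).} An independent transversal of $\mathcal{F}$ in the graphic matroid $\mathcal{M}$ of $K_{2kt+1}$ is a set of $2kt$ distinct edges forming a forest, with $2t$ edges of each colour. A forest with $2kt$ edges on $2kt+1$ vertices is a spanning tree. So it suffices to find such a transversal.

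\emph{Step 2 (Rado's condition).} Rado's theorem says $\mathcal{F}$ has an independent transversal if and only if every subfamily $\mathcal{F}'\subseteq\mathcal{F}$ satisfies $r_{\mathcal{M}}\bigl(\bigcup\mathcal{F}'\bigr)\ge|\mathcal{F}'|$. Fix the set $J$ of colours appearing in $\mathcal{F}'$. Then the union is determined by $J$, and $|\mathcal{F}'|$ is largest when all $2t$ copies of each colour in $J$ are taken. Hence it suffices to prove that for every nonempty $J\subseteq[k]$ with $|J|=j$,
\[ r_{\mathcal{M}}\Bigl(\bigcup_{i\in J}E_i\Bigr)\ge 2tj. \]

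\emph{Step 3 (counting).} This is the only real step. Suppose the inequality fails for some $J$, and let $S=\bigcup_{i\in J}E_i$ and $N=2kt+1$. The graphic rank of $S$ equals $N$ minus the number $q$ of connected components of the spanning graph $(V(K_N),S)$. So $r(S)\le 2tj-1$ means $q\ge N-2tj+1$. Among graphs on $N$ vertices with at least $q$ components, the number of edges is maximised by a single clique on $N-q+1$ vertices together with isolated vertices. This is the standard convexity fact $\binom{a}{2}+\binom{b}{2}\le\binom{a+b-1}{2}$ for $a,b\ge1$, obtained by moving vertices into the largest component. Therefore
\[ |S|\le\binom{N-q+1}{2}\le\binom{2tj}{2}. \]
This contradicts the hypothesis that more than $\binom{2jt}{2}$ edges carry colours from $J$.

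So Rado's condition holds and the transversal from Step 1 gives the required colour-balanced spanning tree. I expect no serious obstacle beyond checking that restricting Rado's condition to "full" subfamilies is legitimate and verifying the extremal edge count in Step 3. Both are routine. The case $j=k$ is consistent with the hypothesis, since $\binom{2kt}{2}<\binom{2kt+1}{2}=e(K_N)$.
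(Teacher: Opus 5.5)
Your proof is correct and is essentially the paper's argument. The paper applies Edmonds' matroid intersection theorem to the graphic matroid of $K_{2kt+1}$ and the partition matroid with capacity $2t$ on each colour class, which is equivalent here to your use of Rado's theorem; both reduce to showing $r\bigl(\bigcup_{i\in J}E_i\bigr)\ge 2t|J|$ for every set $J$ of colours. Your Step 3, which counts components and uses $\binom{a}{2}+\binom{b}{2}\le\binom{a+b-1}{2}$, is a more careful version of the paper's ``smallest clique'' step, since the paper does not explicitly handle the case where the edges of $U$ span several components.
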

A corollary of this result is that every colour-balanced complete graph admits a colour-balanced spanning tree, up to divisibility constraints. 
\begin{corollary}\label{cor:balanced_tree}
    Let $t \geq 1$ and let $c$ be a colour-balanced $k$-edge-colouring of $K_{2kt+1}$. Then there is a colour-balanced spanning tree of $K_{2kt+1}$.
\end{corollary}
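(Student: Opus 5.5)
The corollary should follow directly from \cref{thm:colour_balanced_tree}, so my plan is just to verify its hypothesis. Let $c$ be a colour-balanced $k$-edge-colouring of $K_{2kt+1}$. Since $K_{2kt+1}$ has $\binom{2kt+1}{2} = kt(2kt+1)$ edges, each colour class has exactly $t(2kt+1)$ edges. Fix $j \in [k]$ and any set of $j$ colours. The number of edges with colours in this set is
\[
    jt(2kt+1).
\]
I need to show this is strictly greater than $\binom{2jt}{2} = jt(2jt-1)$. This is equivalent to $2kt+1 > 2jt-1$. That holds because $j \le k$, so $2kt+1 \ge 2jt+1 > 2jt-1$.

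Hence the hypothesis of \cref{thm:colour_balanced_tree} is satisfied for every $j \in [k]$ and every choice of $j$ colours. The theorem then gives a colour-balanced spanning tree of $K_{2kt+1}$.

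There is no real obstacle here. The only point to check is the edge count per colour class, which is an integer because $\binom{2kt+1}{2}$ is divisible by $k$; this is the divisibility constraint mentioned in the text. The substantive work is entirely in \cref{thm:colour_balanced_tree}. There, the natural route is to use a matroid intersection or union argument between the graphic matroid and a partition matroid with capacity $2t$ per colour, with the condition on $j$-colour subsets playing the role of the rank inequality in Rado's theorem.
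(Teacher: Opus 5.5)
Your proof is correct and is the paper's route: the paper derives the corollary immediately from \cref{thm:colour_balanced_tree}, and your computation that any $j$ colour classes together contain $jt(2kt+1) > jt(2jt-1) = \binom{2jt}{2}$ edges is exactly the check of its hypothesis that the paper leaves implicit. Your sketch of the theorem itself also matches the paper, which intersects the cycle matroid of $K_{2kt+1}$ with a partition matroid of capacity $2t$ per colour and applies Edmonds' matroid intersection theorem.
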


Finally, we consider lower bounds. We prove lower bounds for $f_c$ in the complete and complete bipartite perfect matching settings. While Pardey and Rautenbach found an example of a colour-balanced $K_6$ with $k=3$ that does not admit a colour-balanced perfect matching, this appears to be the only previously known counterexample to the existence of colour-balanced perfect matchings in complete graphs. We exhibit an infinite class of counterexamples, establishing that colour-balanced $k$-edge-colourings of $K_{2kt}$ are not guaranteed to admit colour-balanced perfect matchings, even for very large $t$. We also establish that colour-balanced $k$-edge-colourings of $K_{kt,kt}$ are not guaranteed to admit colour-balanced perfect matchings: in fact, for infinitely many pairs $(k,t)$, there exist edge-colourings of $K_{kt,kt}$ where every perfect matching $M$ satisfies $f_c(M) \geq \sqrt{k/2}$. This suggests that there is a hard barrier to significantly improving our upper bounds with similar techniques. 

\subsection{Outline}

A central tool, which we use throughout the paper, is a theorem on representative perfect matchings in the bipartite graph $K_{n,n}$. We begin in \cref{sec:knn_section} by proving this result.
\begin{restatable}{theorem}{vecBipartite}\label{thm:vec-bipartite}
    Let $\vecf \colon E(K_{n,n}) \to \mathbb{R}^k$ be such that $\| \vecf(e) \|_1 \leq 1$ for all $e \in E(K_{n,n})$. Then there is a perfect matching $M$ of $K_{n,n}$ satisfying
    \[
        f_\vecf(M) = \bigO(k^2).
    \]
\end{restatable}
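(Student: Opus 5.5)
We follow the route suggested in the abstract: a linear relaxation, then a rounding step via necklace splitting. Translate the labels so that $\vecf(K_{n,n})=0$. This changes each label by at most $\frac{1}{n^2}\norm{\vecf(K_{n,n})}_1\le 1$, so we now have $\norm{\vecf(e)}_1\le 2$. The goal is then a perfect matching $M$ with $\norm{\vecf(M)}_1=\bigO(k^2)$.

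\textbf{Linear relaxation.} The uniform doubly stochastic matrix $x_e=1/n$ is a point of the Birkhoff polytope with $\sum_e x_e\vecf(e)=0$. Consider the polytope
\[
P=\Bigl\{x \text{ doubly stochastic}: \sum_e x_e \vecf(e)=0\Bigr\}.
\]
It is nonempty, and we take a vertex $x^*$. The Birkhoff constraints together with the $k$ extra linear equations mean that $x^*$ lies on a face of the Birkhoff polytope of dimension at most $k$. By the standard description of faces of the Birkhoff polytope, the support graph $S$ of $x^*$ (restricted to fractional edges) has cyclomatic number at most $k$. Equivalently, $x^*$ is a convex combination of at most $k+1$ perfect matchings $M_0,\dots,M_k$ that all agree outside a set of edges whose fractional part forms a subgraph with few cycles.

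The edges with $x^*_e=1$ already form a partial matching. The fractional edges form a bipartite graph $S$ with every vertex of degree at least $2$ and $|E(S)|\le |V(S)|+k-1$. Such a graph consists of at most $\bigO(k)$ vertices of degree at least $3$, joined by at most $\bigO(k)$ long paths (``threads'').

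\textbf{Rounding the threads.} Along each thread the fractional values alternate between $a$ and $1-a$ for some $a$. Rounding a thread means choosing one of its two alternating halves, although the choice must be consistent at the branch vertices. To do this, enumerate the choices at the $\bigO(k)$ branch structures, and observe that rounding a single long alternating path so that the vector sum is off by at most $\bigO(k)$ is a discrete necklace-splitting problem. Consider the beads along the path: each consecutive pair of edges is one bead, and switching which half we take on a contiguous segment changes $\vecf$ by the alternating sum over that segment. By the Alon--Goldberg--West necklace splitting theorem (or its continuous version with rounding), with $k$ coordinates we can choose $\bigO(k)$ cut points so that the chosen alternating pattern achieves the required fractional share of every coordinate, up to $\bigO(k)$ error coming from the cuts.

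A cut in the middle of a path breaks the alternation, leaving a vertex uncovered or covered twice. We repair each cut locally by a short swap using an edge outside the support, which is available because $K_{n,n}$ is complete. Each such repair costs $\bigO(1)$ in $\ell_1$.

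\textbf{Total error and main obstacle.} Summing over $\bigO(k)$ threads, each incurring $\bigO(k)$ cuts at $\bigO(1)$ cost, gives $\bigO(k^2)$. The main obstacle is making the necklace-splitting step compatible with the matching structure. The cut points must respect the parity and alternation of the paths, and the local repairs must produce a genuine perfect matching. I would try first to handle this by running a single necklace split on the concatenation of all threads, with a $2$-coloured necklace per thread. Cuts would then be placed only at even positions, at $\bigO(1)$ extra cost per cut, and each cut would be repaired by a $4$-cycle swap through a vertex pair in the complement.
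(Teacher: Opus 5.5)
Your route is the paper's. The vertex-of-$P$ argument via faces of the Birkhoff polytope is an equivalent packaging of \cref{lem:linear_relaxation}; the paper reaches the same bound by redistributing weight around fundamental cycles. The thread decomposition, per-thread rounding with $\bigO(k)$ cuts, and the $\bigO(k)\cdot\bigO(k)$ error count also match. Your beads, consecutive edge pairs with value $\vecf(e_{2j-1})-\vecf(e_{2j})$ and target share $a$ of this alternating sum, are slightly leaner than the paper's $2k$ charges $\rho_i^O,\rho_i^E$.

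The genuine gap is the splitting tool. Alon--Goldberg--West and its continuous form split a necklace into \emph{equal} shares. Here thread $i$ needs an arbitrary real share $a_i\in(0,1)$ of signed, real-valued beads. Neither $q$ thieves for $a_i=p/q$ nor iterated halving keeps the number of cuts at $\bigO(k)$. What you need is the Stromquist--Woodall $\alpha$-splitting theorem: for $d$ measures on an interval and any $\alpha\in[0,1]$, some union of at most $d+1$ intervals carries exactly an $\alpha$-fraction of each measure. Supplying this for signed labels is the technical core of the paper's argument (\cref{thm:necklaceGeneral}, proved for charges via the ham sandwich theorem on the moment curve). With your encoding, you could instead apply the classical theorem to the positive and negative parts of the $k$ coordinates ($2k$ measures), then round the cut points to bead boundaries at $\bigO(1)$ cost each.

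The ``main obstacle'' you flag is not a real one, and your proposed fix would not work as stated. A single split of the concatenated threads gives every measure the same fraction, whereas thread $i$ needs its own $a_i$. You also need no enumeration at branch vertices, no parity-respecting cuts, and no $4$-cycle swaps. As in the paper, delete $\bigO(k)$ edges so that the fractional graph becomes $\bigO(k)$ disjoint alternating paths; this means removing edges at branch vertices and one edge from each cycle component. Then round each path separately, discarding an edge wherever a cut leaves a vertex doubly covered. Together with the weight-$1$ edges, what remains is a matching of $K_{n,n}$ missing $\bigO(k^2)$ vertices. Being a matching, it misses equally many vertices on each side, so completeness of $K_{n,n}$ lets you finish with arbitrary edges. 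Each added edge costs at most $2$ in $\ell_1$ after your translation, so the total error is $\bigO(k^2)$.
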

While complete bipartite graphs do not seem to have been considered in the broader literature on representative subgraphs, this result is the cornerstone of our remaining results. In particular, it allows us to construct representative bijections between sets, which we will leverage to construct representative embeddings of more complex subgraphs $H$ in $K_n$. In particular, this paper establishes a variety of strategies for applying \cref{thm:vec-bipartite} to obtain other bounds for representative subgraphs, using a variety of different approaches for partitioning the vertex sets of $H$ and $K_n$.

In \cref{sec:other_graphs}, we use \cref{thm:vec-bipartite} to prove our main results for representative matchings in $K_{2n}$ and general representative spanning subgraphs of bounded degree, as well as for representative matchings of uniform hypergraphs. In all cases, our strategy is broadly the same. We begin with a complete (hyper)graph, and take a random partition of the vertex set into some number of parts. With positive probability, the induced multi-partite (hyper)graph is almost representative of the original complete graph. We then apply \cref{thm:vec-bipartite} to embed our graph in this subgraph, with the precise embedding strategy changing in each context. In the simplest case of matchings in $K_{2n}$, it suffices to find an almost representative $K_{n,n}$ subgraph of $K_{2n}$. For arbitrary spanning graphs $H$, we partition $V(H)$ into independent sets, and then aim to use \cref{thm:vec-bipartite} to embed one independent set at a time into a suitable partition of $V(K_n)$. Suitable bounds for embedding $H$ can be obtained in this manner as long as there is a suitably `uniform' way to partition $V(H)$ into independent sets. By considering properties of random vertex colourings, we show that bounded maximum degree graphs admit suitable uniform partitions. 

In \cref{sec:spanning_forests}, with some additional work, we obtain an improved `uniform' partition for spanning subgraphs that are forests. Applying the embedding result from \cref{sec:other_graphs} with this improved uniformity bound proves \cref{thm:forest_error}. We also characterise the edge-colourings of complete graphs that admit some colour-balanced spanning tree, by reduction to a matroid optimisation problem. Finally, in \cref{sec:lower-bounds} we present a series of constructions that establish lower bounds for the matchings problem on complete and complete bipartite graphs. We conclude in \cref{sec:open_problems} with a discussion of some open problems.

\textbf{Notation and terminology:}
Throughout the paper, we use the following notation. We denote by $[k]$ the set $\{1,\dots,k\}$. For $x,y\in\R^k$, let $x^{(i)}$ denote the $i$th coordinate of $x$, let $\norm{x}_p$ be the $\ell_p$ norm of $x$, and let $x\cdot y$ or simply $xy$ denote the dot product of $x$ and $y$. We also sometimes write $x^2$ for $x\cdot x$. For any graph $G$, we denote by $V(G)$ and $E(G)$ the vertex set and edge set of $G$, respectively, and by $e(G)$ the size of $E(G)$. For (not necessarily disjoint) subsets $A$ and $B$ of $V(G)$, we denote by $E(A,B)$ the set of edges in $G$ with one endpoint in $A$ and the other in $B$, and by $e(A,B)$ the size of $E(A,B)$. 

\section{Upper bound for $K_{n,n}$}\label{sec:knn_section}

In this section, we prove \cref{thm:vec-bipartite}. We begin by outlining the argument.

\textbf{Step 1: Linear relaxation.}
We will first consider a linear relaxation of the problem. A perfect matching of $K_{n,n}$ can be viewed as an edge-weighting of $K_{n,n}$ in which every edge weight is either $0$ or $1$, and the sum of the weights of edges incident with any vertex is precisely $1$. Allowing the edge weights to instead be in $[0,1]$ with the same vertex constraints, we obtain a `fractional perfect matching' of $K_{n,n}$. To construct an almost representative perfect matching, we will first find a fractional perfect matching in which all but a few edges are weighted $0$ or $1$, and the weighted sum of the edge labels is representative. 

\textbf{Step 2: Matching the remaining edges.} We will show that after Step 1, the edges with non-integral weight form $\bigO(k)$ edge-disjoint paths and a small number of additional edges. After deleting these additional edges, the remainder of the problem reduces to finding an appropriate matching on each of the edge-disjoint paths. We aim to split each path into segments, such that in each segment we include either all the even edges or all the odd edges in our matching. Ensuring that this split achieves an almost representative matching with respect to the path edge weights and edge labels is done using a necklace-splitting type theorem. As a result, we make $\bigO(k)$ ``cuts'' on each of the $\bigO(k)$ paths, and we incur an $\bigO(k^2)$ error in our overall matching by arbitrarily matching pairs of unmatched leftover vertices.

\subsection{Linear relaxation}

We need to construct a representative fractional matching of $K_{n,n}$ with few fractional weight edges. Our strategy is to start with every edge weighted $\frac{1}{n}$ and iteratively redistribute weights around certain cycles to increase the number of integer-weighted edges while maintaining that the fractional matching is representative. We continue this process until the remaining fractional subgraph of $K_{n,n}$ is very close to a forest. 

Let $F$ be a spanning forest of a graph $G$, which will later consist of the fractional edges of $K_{n,n}$. For each edge $e \in E(G) \setminus E(F)$, there is a unique cycle $C_e$ contained in $F \cup \{e\}$, and the set of all such cycles is called the \defn{fundamental cycle basis} of $G$. The size of the fundamental cycle basis is known as the \defn{cyclomatic number} of $G$, and is equal to $|E(G)| - |V(G)| + c(G)$, where $c(G)$ is the number of connected components of $G$. Formally, we show that we can appropriately redistribute weights around cycles in the fundamental cycle basis as long as the cyclomatic number of the fractional subgraph remains sufficiently large.

\begin{lemma}\label{lem:linear_relaxation}
    Let $\vecf \colon E(K_{n,n}) \to \mathbb{R}^k$ such that $\| \vecf(e) \|_1 \leq 1$ for all $e \in E(K_{n,n})$. Then there is an edge-weighting $w$ of $K_{n,n}$ satisfying $\sum_{e} w(e)\vecf(e)= \frac{1}{n}\cdot \vecf(K_{n,n})$ and $0 \leq w(e) \leq 1$ for every edge $e$, and for which the subgraph of $K_{n,n}$ consisting of edges with fractional weight has cyclomatic number at most $k$.
\end{lemma}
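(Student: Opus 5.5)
We start from the uniform weighting $w_0(e) = \frac{1}{n}$ for every edge. This is a fractional perfect matching: every vertex has weighted degree $n \cdot \frac1n = 1$. It also satisfies $\sum_e w_0(e)\vecf(e) = \frac1n \vecf(K_{n,n})$. We then modify $w$ step by step, keeping three invariants: $0\le w(e)\le 1$, the weighted degree of every vertex equals $1$, and the weighted sum $\sum_e w(e)\vecf(e)$ is unchanged. Each step strictly increases the number of edges of integral weight. Since there are finitely many edges, the process terminates, and we will argue that it can only stop once the fractional subgraph $G_w$ has cyclomatic number at most $k$. The degree invariant is not needed for the lemma as stated, but it is preserved for free and is what later makes $w$ a fractional perfect matching.

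The key step is as follows. Let $G_w$ be the subgraph of edges $e$ with $0<w(e)<1$, and suppose its cyclomatic number is at least $k+1$. Fix a spanning forest $F$ of $G_w$ and take $k+1$ fundamental cycles $C_{e_1},\dots,C_{e_{k+1}}$. Every cycle in $K_{n,n}$ is even because the graph is bipartite. So for each $C_{e_j}$ we define an alternating vector $a_j\in\R^{E}$ that assigns $+1$ and $-1$ alternately around the cycle and $0$ elsewhere. Adding any multiple of $a_j$ to $w$ preserves every vertex's weighted degree. Now consider the linear map
\[ \lambda=(\lambda_1,\dots,\lambda_{k+1}) \mapsto \sum_j \lambda_j \sum_{e} a_j(e)\vecf(e)\in\R^k. \]
Its domain has dimension $k+1$ and its codomain has dimension $k$, so it has a nonzero kernel vector $\lambda$.

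The direction $d=\sum_j\lambda_j a_j$ is nonzero. Indeed, $a_j$ is the only one of these vectors that is nonzero on the non-forest edge $e_j$, since each fundamental cycle contains exactly one edge outside $F$; so if $\lambda_j\neq 0$ then $d(e_j)\neq 0$. The direction $d$ is supported on $G_w$, it preserves vertex degrees, and it preserves $\sum_e w(e)\vecf(e)$. Every edge in its support has weight strictly between $0$ and $1$. We therefore move along $w+s d$, increasing $s$ from $0$ until the first edge of the support reaches weight $0$ or $1$. All other edges stay in $[0,1]$, and at least one fractional edge becomes integral. Edges that were already integral are untouched, so the number of integral edges strictly increases.

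Iterating this step terminates after at most $e(K_{n,n})$ rounds. At termination, no $k+1$ independent fundamental cycles exist, so $G_w$ has cyclomatic number at most $k$, which gives the lemma. The only point needing care is the nonvanishing of $d$, handled by the fundamental-cycle-basis argument above. The use of the fundamental cycle basis, rather than arbitrary cycles, is exactly what guarantees linear independence of the $a_j$.
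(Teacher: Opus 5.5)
Your proof is correct and follows essentially the same argument as the paper. Both use a linear dependency among the $k+1$ alternating fundamental-cycle vectors in $\mathbb{R}^k$, use the unique non-forest edge of each fundamental cycle to show the combined direction is nonzero, and push along that direction until some fractional edge becomes $0$ or $1$. The only difference is cosmetic: the paper argues extremally (it takes $w$ with the fewest fractional edges and derives a contradiction), whereas you iterate from $w\equiv \frac{1}{n}$ until termination.
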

\begin{proof}
    Let $w$ be an edge-weighting of $K_{n,n}$ such that $\sum_{e} w(e)\vecf(e)= \frac{1}{n}\vecf(K_{n,n})$, for each edge $e$ we have $0 \leq w(e) \leq 1$, and for each vertex $v$ we have $\sum_{e \ni v} w(e) = 1$. Such a $w$ exists, since $w(e) = \frac{1}{n}$ for every $e$ suffices. We choose $w$ such that the number of edges with non-integer weight in $K_{n,n}$ is minimised, and let $H$ be the (spanning) subgraph of $K_{n,n}$ consisting of the edges with non-integer weights. 
    We show that $H$ is close to a forest.
    
    Suppose that a fundamental cycle basis $S$ of $H$ has size at least $k+1$, and let $C = e_1,e_2,\dots,e_{2\ell}$ be a cycle in $S$ of length $2\ell$. For $a \in \R$, define $w_{C,a}$ to be a function that updates the current weighting $w$ on $E(K_{n,n})$, by keeping edge weights of edges not in $C$ constant, and redistributing the edge weights in $C$ as follows:
    \begin{align*}
        w(e_j) \shortleftarrow
        \begin{cases}
		w(e_j) + \varepsilon a & \text{if $j$ is even,}\\
            w(e_j) - \varepsilon a & \text{if $j$ is odd,}
	\end{cases}
    \end{align*}
    for each $j \in [2\ell]$. Note that updating the weights on $C$ by $w_{C,a}$ preserves the property that $\sum_{e \ni v} w(e) = 1$ at every vertex $v$. Furthermore, applying $w_{C,a}$ changes the value of $\sum_{e} w(e)\vecf(e)$ by the vector 
    $u_C = \varepsilon \sum_{j=1}^{2\ell} \vecf(e_j)(-1)^j\in\R^k$.

    Since $S$ has size at least $k+1$, there is a linear dependency among the set of vectors $\{u_C\}_{C\in S}$, say $\sum_{C\in S}a_C u_C=0$, with real values $a_C$ not all zero. For each $C\in S$, modify the weights of edges in $C$ by $w_{C,a_C}$. Since each cycle in the fundamental cycle basis contains an edge that is contained in no other fundamental cycle, it follows that at least one edge in $H$ now has a new weight. By choosing $\varepsilon>0$ minimal such that this action results in at least one edge in $H$ being assigned the weight either $0$ or $1$, it follows that $H$ now has strictly fewer fractionally weighted edges, and every edge $e$ in $H$ still has a weight $0 \leq w(e) \leq 1$. Our choice of vectors necessarily maintains $\sum_{e} w(e)\vecf(e)= \frac{1}{n}\vecf(K_{n,n})$; furthermore, $K_{n,n}$ still satisfies $\sum_{e \ni v} w(e) = 1$ at every vertex $v$. It follows that our choice of $H$ was not minimal with respect to the number of fractionally weighted edges in $K_{n,n}$. Hence, we may assume that a fundamental cycle basis of $H$ contains at most $k$ cycles, and so by definition, $H$ has cyclomatic number at most $k$, as required.
\end{proof}

\subsection{Matching the leftover edges}

In this section we show that we can obtain a representative matching of the fractionally weighted graph obtained in \cref{lem:linear_relaxation} by applying a generalised version of Alon's well-known necklace splitting theorem \cite{alon1987}. We begin by establishing the version of the theorem that we will use. 
A \defn{charge} $\rho$ on $\R^d$ is a signed finite Borel measure that can be written as $\rho=\mu^{+}-\mu^{-}$, where $\mu^{\pm}$ are finite Borel measures satisfying $\mu^{\pm}(P)=0$ for every hyperplane $P$. By a \defn{half-space}, we mean either of the two parts into which a hyperplane partitions $\mathbb{R}^d$. We also allow a half-space to be empty or equal to $\mathbb{R}^d$. Akopyan and Karasev~\cite{Akopyan_2012} showed that the classical proof of the ham sandwich theorem (see Matou{\v{s}}ek~\cite{Matousek2003UsingBorsukUlam}) extends directly to charges as follows.\\
\begin{theorem}[Ham sandwich theorem for charges, \cite{Akopyan_2012}]\label{thm:ham_sandwich_charges}
    Let $\rho_1, \ldots , \rho_d$ be $d$ charges in $\R^d$. Then there exists a half-space $H$ such that, for all $i \in [d]$,
\[
    \rho_i(H)=\frac{1}{2}\cdot\rho_i(\mathbb{R}^d).
\]
\end{theorem}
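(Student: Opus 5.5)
We prove this via the Borsuk--Ulam theorem, following the classical ham sandwich argument; the signed nature of the measures plays no role beyond linearity. We parametrise half-spaces by the unit sphere $S^d\subset\R^{d+1}$. A point $u=(u_0,u_1,\dots,u_d)\in S^d$ determines the set
\[
H(u)=\{x\in\R^d : u_0+u_1x_1+\dots+u_dx_d\ge 0\}.
\]
This is a closed half-space when $(u_1,\dots,u_d)\neq 0$. At the two poles $u=(\pm1,0,\dots,0)$ it is $\R^d$ or $\emptyset$, which the statement explicitly allows. We also note that $H(-u)$ is the complementary half-space of $H(u)$, up to the bounding hyperplane $\{u_0+u\cdot x=0\}$.

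For each $i\in[d]$, write $\rho_i=\mu_i^+-\mu_i^-$ and define
\[
g_i(u)=\rho_i(H(u))-\tfrac12\rho_i(\R^d).
\]
Set $g=(g_1,\dots,g_d)\colon S^d\to\R^d$.

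\textbf{Oddness.} Every hyperplane is $\mu_i^\pm$-null, so $\rho_i(H(u))+\rho_i(H(-u))=\rho_i(\R^d)$ whenever $(u_1,\dots,u_d)\ne0$. At the poles the same identity holds trivially, since one side is $\R^d$ and the other is $\emptyset$. Hence $g(-u)=-g(u)$ for all $u\in S^d$.

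\textbf{Continuity.} By linearity, it suffices to show that $u\mapsto\mu(H(u))$ is continuous for a finite Borel measure $\mu$ vanishing on hyperplanes. Take $u_n\to u$. Away from the boundary set $\{x: u_0+u\cdot x=0\}$, the indicator functions $\mathbb 1_{H(u_n)}(x)$ converge pointwise to $\mathbb 1_{H(u)}(x)$. The boundary set is a hyperplane, which is $\mu$-null, except at the poles where it is empty. Dominated convergence, using finiteness of $\mu$, then gives $\mu(H(u_n))\to\mu(H(u))$. This is the only step where the hypothesis on hyperplanes is really needed, and it is the point I would check most carefully, including at the poles, where the convergence is still pointwise everywhere.

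\textbf{Conclusion.} The map $g\colon S^d\to\R^d$ is continuous and odd, so the Borsuk--Ulam theorem yields $u$ with $g(u)=0$. Then $H=H(u)$ satisfies $\rho_i(H)=\frac12\rho_i(\R^d)$ for all $i\in[d]$. If $H(u)$ is a closed half-space, we may replace it by the open one if desired, since they differ by a null set. This is exactly the statement of \cref{thm:ham_sandwich_charges}.
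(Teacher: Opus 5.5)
Your proposal is correct and is essentially the classical Borsuk--Ulam argument (half-spaces parametrised by $S^d$, with the two poles giving $\R^d$ and $\emptyset$). The paper does not prove this statement itself; it cites Akopyan and Karasev and remarks that the classical proof in Matou\v{s}ek extends directly to charges, and your linearity reduction to $\mu_i^{\pm}$ (for both oddness and continuity) is exactly that extension.
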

We will need a necklace splitting theorem for charges. A theorem of this type for probability measures was proved by Stromquist and Woodall in~\cite{stromquist1985sets}. However, by applying \cref{thm:ham_sandwich_charges} in place of the traditional ham sandwich theorem in their proof, one can obtain the following, more general statement. We provide this slightly amended version of the proof from~\cite{stromquist1985sets} for completeness. 
\begin{theorem} \label{thm:necklaceGeneral}
    Let $d \geq 2$, and let $\rho_1,\dots,\rho_d$ be charges on a segment $I\subset\R$. For each $\alpha \in [0,1]$, there is a set $K_\alpha \subseteq I$ such that $K_\alpha$ is a union of at most $d+1$ intervals, and $\rho_i(K_\alpha) = \alpha\cdot \rho_i(I)$ for each $i \in [d]$.
\end{theorem}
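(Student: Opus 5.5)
We follow the Stromquist--Woodall strategy: reduce the claim to the case $\alpha = 1/m$, and handle that case with the ham sandwich theorem for charges (\cref{thm:ham_sandwich_charges}) applied to the moment curve.

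\textbf{Step 1: embedding.} Parametrise $I=[0,1]$ and consider the moment curve $\gamma(s)=(s,s^2,\dots,s^d)\in\R^d$. Any hyperplane meets $\gamma$ in at most $d$ points, since a nonzero polynomial of degree at most $d$ has at most $d$ roots. Hence for any half-space $H$, the set $\gamma^{-1}(H)$ is a union of at most $\lceil (d+1)/2\rceil$ intervals, and so is its complement. Push each $\rho_i$ forward to a charge $\gamma_*\rho_i$ on $\R^d$.

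\textbf{Step 2: the non-degeneracy hypothesis.} A charge must give zero mass to every hyperplane. A hyperplane meets $\gamma$ in finitely many points, so this holds exactly when the $\rho_i$ have no atoms. I will assume the charges in the statement are atomless. This is the setting in which the paper applies the result: the necklace edges are spread out as uniform densities on unit intervals. Applying \cref{thm:ham_sandwich_charges} then gives a half-space bisecting every $\rho_i$. This settles $\alpha=1/2$ with at most $d+1$ intervals.

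\textbf{Step 3: general $\alpha$.} Bisection alone is not enough for arbitrary $\alpha$, so I would follow Stromquist--Woodall more closely. The case $\alpha=0$ is trivial. For the main step, I would pass to a circle: identify the endpoints of $I$. Their theorem proves, via a Borsuk--Ulam/ham sandwich argument, that for every $\alpha$ there is a set that is a union of at most $d$ arcs of the circle and has $\rho_i$-measure exactly $\alpha\rho_i(I)$ for all $i$. Unrolling the circle back to $I$ splits at most one arc, giving at most $d+1$ intervals.

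To establish the circle statement:
\begin{enumerate}
\item Prove it first for $\alpha=1/m$ by induction on $m$. Use a continuous family of subsets parametrised by a sphere, and use the ham sandwich theorem for charges in place of the measure version. The only analytic facts the argument needs are that measures of sets depend continuously on the parameters (true for atomless charges) and that the set of good parameters is closed.
\item Extend to all $\alpha$ by a limiting or compactness argument. The family of configurations with at most $d$ arcs is compact, and the map from a configuration to its vector of measures $(\rho_i(K))_i$ is continuous, so the solution sets for a sequence $\alpha_n\to\alpha$ accumulate to a solution for $\alpha$.
\end{enumerate}

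\textbf{Main obstacle.} The key step is proving the circle statement for $\alpha=1/m$ with signed measures. In the probability-measure proof, positivity is used in places such as choosing a set of maximal mass and in monotonicity arguments. These uses must be replaced by purely topological arguments (degree and continuity), which are insensitive to sign. I would first check each use of positivity in Stromquist--Woodall and replace it by the charge ham sandwich theorem or by continuity in the cut positions. The remaining bookkeeping, namely that there are at most $d+1$ intervals after unrolling the circle, is routine.
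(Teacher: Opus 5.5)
\textbf{There is a genuine gap in Step 3.}

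\emph{The extension from $\alpha=1/m$ to all $\alpha$ fails.} Your compactness argument only shows that the set of achievable $\alpha$ is closed. It therefore upgrades a \emph{dense} set of good values to all of $[0,1]$. But $\{1/m : m\ge 1\}\cup\{0\}$ is already closed and is not dense; for example, $2/5$ is not a limit of such values. Adding the complements $1-1/m$ does not help.

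\emph{The case $\alpha=1/m$ is never actually proved.} The phrases ``a continuous family of subsets parametrised by a sphere'' and ``replace each use of positivity'' describe work still to be done; they are not an argument. It is also unclear how an induction from $1/(m-1)$ to $1/m$ could go without already knowing the theorem for $\alpha=(m-1)/m$ applied to restricted charges. In effect you cite the circle theorem for charges and defer its proof, and the circle detour is not needed.

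\textbf{The missing idea.} Your Step 2 already contains the whole engine, provided you apply it to \emph{restricted} charges and count endpoints.
\begin{itemize}
\item Let $X$ be the set of $\alpha$ for which some $K_\alpha\subseteq I$ with at most $2d$ interior endpoints (hence at most $d+1$ intervals) satisfies $\rho_i(K_\alpha)=\alpha\rho_i(I)$ for all $i$.
\item Your compactness argument, done directly on $I$, shows $X$ is closed.
\item $0\in X$, witnessed by $\varnothing$.
\item $X$ is closed under $\alpha\mapsto 1-\alpha$, since $I\setminus K_\alpha$ has the same interior endpoints as $K_\alpha$.
\item $X$ is closed under $\alpha\mapsto\alpha/2$. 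Push the restrictions $\rho_i|_{K_\alpha}$ (not $\rho_i$ itself) forward along the moment curve and bisect them with the charge ham sandwich theorem. Set $K'=\gamma^{-1}(H)\cap K_\alpha$ and $K''=K_\alpha\setminus K'$; both have $\rho_i$-measure $\tfrac{\alpha}{2}\rho_i(I)$ for every $i$.
\item Endpoint count: each of the at most $d$ points where $\gamma$ crosses $\partial H$ contributes at most one endpoint to each of $K'$ and $K''$. Each of the at most $2d$ endpoints of $K_\alpha$ is an endpoint of at most one of them. So $K'$ and $K''$ together have at most $4d$ interior endpoints, and one of them has at most $2d$. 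That one witnesses $\alpha/2\in X$.
\end{itemize}
Hence $X$ contains all dyadic rationals, which are dense, so $X=[0,1]$. This is the paper's proof, and it uses no positivity anywhere.

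A minor point: atomlessness is not an extra assumption you need to add. By the paper's definition, a charge on $\mathbb{R}$ vanishes on hyperplanes, which in dimension one are points.
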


\begin{proof}
    Let $X$ be the set of $\alpha\in [0,1]$ for which there exists a set $K_\alpha\subset I$ that is a union of intervals with at most $2d$ endpoints in the interior of $I$ and satisfies $\rho_i(K_\alpha)=\alpha\cdot\rho_i(I)$ for all $i\in[d]$. We will show that $X=[0,1]$, from which the theorem clearly follows. Noting that $\rho_i(x) = 0$ for each point $x \in I$ and each $i \in [d]$, a standard sequential compactness argument shows that $X$ is closed. Therefore, it suffices to show that $X$ is dense in $[0,1]$. We prove this by establishing:
    \begin{enumerate}[label=(\roman*), noitemsep]
        \item $X$ contains $0$, and if $\alpha$ is in $X$, then so is $1-\alpha$; \label{enum:gennecklace-first}
        \item if $\alpha$ is in $X$, then so is $\frac{1}{2}\cdot\alpha$. \label{enum:gennecklace-second}
    \end{enumerate} 
    For~\ref{enum:gennecklace-first}, observe that $0\in X$ is witnessed by $K_0=\varnothing$ and that $K_{1-\alpha}\coloneqq I\setminus K_{\alpha}$ witnesses $1-\alpha\in X$ when $\alpha\in X$, as the number of internal endpoints remains the same after passing to the complement. It remains to show~\ref{enum:gennecklace-second}. Fix $\alpha\in X$ and a corresponding set $K_\alpha$. Define the moment curve $\gamma\colon\R\to\R^d$ by $\gamma(t)=(t,t^2,\dots,t^d)$, and for each $i\in[d]$, let $\eta_i$ be the signed Borel measure on $\R^d$ whose value on a Borel set $\Omega\subseteq\R^d$ is given by 
    \[
        \eta_i(\Omega)=\rho_i(\gamma^{-1}(\Omega)\cap K_\alpha).
    \] 
    Any hyperplane intersects with the moment curve in at most $d$ points and hence has zero $\eta_i$-measure, and so each $\eta_i$ is a charge on $\R^d$. Applying \cref{thm:ham_sandwich_charges} to $\eta_1,\dots,\eta_d$, we obtain a half-space $H\subseteq\R^d$ with $\eta_i(H)=\frac{1}{2}\cdot \eta_i(\R^d)$ for all $i\in[d]$. Define ${K'\coloneq\gamma^{-1}(H)\cap K_\alpha}$ and ${K''\coloneq K_\alpha\setminus K'}$. Both $K'$ and $K''$ are collections of intervals, and for each $i\in[d]$ they satisfy
    \begin{align*}
        \rho_i(K') &= \rho_i(\gamma^{-1}(H)\cap K_\alpha)=\eta_i(H)=\frac{1}{2}\cdot\eta_i(\R^d)=\frac{1}{2}\cdot\rho_i(K_\alpha)=\frac{1}{2}\alpha\cdot\rho_i(I), \\
        \rho_i(K'') &= \rho_i(K_\alpha \setminus K')=\frac{1}{2}\cdot\rho_i(K_\alpha)=\frac{1}{2}\alpha\cdot\rho_i(I).
    \end{align*}
     The set $\gamma^{-1}(H)\cap I$ is a union of intervals with at most $d$ interior endpoints, since either $H$ is bounded by a hyperplane that intersects the moment curve in at most $d$ points, or $H$ is empty or $\R^d$ and the statement holds trivially. The original set $K_\alpha$ has at most $2d$ interior endpoints by assumption, and each interior endpoint of $K'$ and $K''$ arises either from an endpoint of $K_\alpha$ or from an endpoint of $\gamma^{-1}(H)\cap I$. Consequently, the total number of interior endpoints of $K'$ and $K''$ together is at most $2d+2d=4d$, so one of $K'$ and $K''$ has at most $2d$ interior endpoints and thus witnesses that $\frac{\alpha}{2}\in X$, completing the proof.
\end{proof}
We are now ready to apply this generalised necklace splitting theorem to obtain representative matchings of edge-weighted paths.

\begin{lemma}\label{lem:path_bound}
    Let $P_n = e_1,\dots,e_{n-1}$ be an $n$-vertex path, let $\vecf : E(P_n) \to \mathbb{R}^k$ satisfy $\| \vecf(e) \|_1 \leq 1$ for all $e \in E(P_n)$, and let $\alpha \in [0,1]$. Let $w : E(P_n) \to [0,1]$ be defined by 
    \[w(e_i) = \begin{cases}
        \alpha & i \text{ odd} \\
        1 - \alpha & i \text{ even}.
    \end{cases}\]
    Then there is a matching $M$ of $P_n$ of size $\frac{n}{2} - \bigO(k)$ such that 
    \[
        \left\| \vecf(M) - \sum_{e \in P_n}w(e)\vecf(e) \right\|_1  = \bigO(k).
    \] 
\end{lemma}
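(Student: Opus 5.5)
We reduce to \cref{thm:necklaceGeneral} by encoding the path as a necklace. Pair up consecutive edges: for $j = 1,\dots,m$ with $m = \lfloor (n-1)/2\rfloor$, let the $j$th \emph{bead} consist of the pair $(e_{2j-1}, e_{2j})$. If $n-1$ is odd, one final edge $e_{n-1}$ is left over; we discard it at a cost of $\bigO(1)$ in both the matching size and the error. For each bead we define two vectors: $a_j = \vecf(e_{2j-1})$ (the contribution if we take the odd edge) and $b_j = \vecf(e_{2j})$ (the contribution if we take the even edge). Within a run of consecutive beads, choosing all odd edges or all even edges gives a valid matching. At a switch point between runs, the two runs are vertex-disjoint unless we go from an even edge $e_{2j}$ to the odd edge $e_{2j+1}$, which share a vertex; in that case we drop one edge. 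This costs $\bigO(1)$ per cut.

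Next we set up the charges. Let $I = [0,m]$, and place bead $j$ on the interval $[j-1,j)$. For each coordinate $i \in [k]$, define a charge $\rho_i$ with density $a_j^{(i)} - b_j^{(i)}$ (with respect to Lebesgue measure) on $[j-1,j)$. We also add one more charge $\rho_{k+1}$ to control the count of odd edges; since $d \ge 2$ is required, this also keeps $d = k+1 \ge 2$. These are absolutely continuous, so they vanish on points and are genuine charges. Applying \cref{thm:necklaceGeneral} with $d = k+1$ and parameter $\alpha$ gives a set $K_\alpha$, a union of at most $k+2$ intervals, with $\rho_i(K_\alpha) = \alpha\,\rho_i(I)$ for every $i$.

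Now we take odd edges on the beads in $K_\alpha$ and even edges elsewhere. On a bead cut fractionally (there are $\bigO(k)$ such beads), we round to either choice. This changes the sum by at most $2$ in $\ell_1$ per cut bead. Writing $B = \sum_j b_j$, the resulting matching $M$ has
\[
\vecf(M) \approx B + \rho(K_\alpha) = B + \alpha \sum_j (a_j - b_j) = \sum_j \bigl(\alpha a_j + (1-\alpha) b_j\bigr),
\]
which equals $\sum_e w(e)\vecf(e)$ up to the discarded last edge. The total error is $\bigO(k)$, coming from $\bigO(k)$ cut beads, $\bigO(k)$ switch conflicts, and $\bigO(1)$ from the boundary.

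Finally, the size of $M$ is one edge per bead, minus $\bigO(k)$ deleted edges, so $|M| \ge m - \bigO(k) = \frac{n}{2} - \bigO(k)$. The main thing to check carefully is the switch bookkeeping: every boundary of $K_\alpha$ (at most $2(k+2)$ of them) must cost only $\bigO(1)$ in both size and $\ell_1$ error, and the rounding of partial beads must be compatible with the conflict deletions. Both are local $\bigO(1)$ effects per endpoint, so summing over the endpoints gives the $\bigO(k)$ bound.
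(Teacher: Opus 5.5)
Your proof is correct and follows essentially the paper's route: apply \cref{thm:necklaceGeneral} with parameter $\alpha$, take odd edges on $K_\alpha$ and even edges on its complement, and lose $\bigO(1)$ in both matching size and $\ell_1$ error per endpoint of $K_\alpha$. The differences are minor. Pairing edges into beads lets you use the $k$ difference charges with density $a_j-b_j$, plus a dummy charge to ensure $d\ge 2$, instead of the paper's $2k$ charges $\rho_i^O,\rho_i^E$; this improves constants but not the $\bigO(k)$ bound, and you handle even-to-odd switches by dropping an edge, whereas the paper discards every edge whose interval contains an endpoint.
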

\begin{proof}
    \newcommand{\sh}[0]{H} 
    When $i$ is odd, we call $e_i$ an \defn{odd edge} and when $i$ is even we call $e_i$ an \defn{even edge}. Note that the odd edges of $P_n$ all have weight $\alpha$, and the even edges all have weight $1 - \alpha$, so $\sum_{e \in P_n}w(e)\vecf(e)=\alpha \sh_O + (1-\alpha)\sh_E$, where $\sh_O$ and $\sh_E$ denote the sums of $\vecf(e)$ over all odd and all even edges, respectively. 
    We will split $E(P_n)$ into two classes, such that the sum $\sum \vecf(e)$ over the odd (resp. even) edges in the first class is close to $\alpha \sh_O$ (resp. $\alpha \sh_E$). In other words, the first class will receive an $\alpha$ proportion of both $\sh_O$ and $\sh_E$. We will then obtain the desired $M$ by matching the odd edges from the first class and the even edges from the second class. 

    First, identify $P_n$ with the interval $I_n=[0,n)$ by identifying each edge $e_j$ in $P_n$ with the interval $[j-1, j)$. Let $O$ (resp. $E$) denote the subintervals of $I_n$ identified with odd (resp. even) edges of $P_n$, and for each $i \in [k]$, define two charges $\rho_i^O$ and $\rho_i^E$ on $I_n$ as follows. The density of $\rho_i^O$ on $[j-1, j)$ is equal to $\vecf(e_j)^{(i)}$ if $j$ is odd and $0$ otherwise, and the density of $\rho_i^E$ on $[j-1, j)$ is equal to $\vecf(e_j)^{(i)}$ if $j$ is even and $0$ otherwise. It follows from \cref{thm:necklaceGeneral} that there is a set $K_{\alpha} \subseteq I_n$ that is a union of at most $2k+1$ intervals in $I_n$ and satisfies $\rho_i^O(K_\alpha) = \alpha \rho_i^O(I_n) = \alpha \sum_{e \in O}\vecf(e)^{(i)}$ and $\rho_i^E(K_\alpha) = \alpha \rho_i^E(I_n) = \alpha \sum_{e \in E}\vecf(e)^{(i)}$ for each $i \in [k]$. 

    We now restrict our attention to edges of $P_n$ whose associated subintervals in $I_n$ do not contain one of the at most $4k+2$ endpoints of the intervals in $K_\alpha$. Each of these edges is identified with a subinterval that is either contained in the interior of $K_\alpha$, or the interior of
    $K_\alpha^c=I_n\setminus K_\alpha$. For a subset $A \subseteq I_n$, we write $e \in A$ if the subinterval of $I_n$ identified with $e$ is contained in the interior of $A$. Note that for each $i \in [k]$, the support of $\rho_i^O$ in $K_\alpha$ is contained in $K_\alpha \cap O$, and the support of $\rho_i^E$ in $K_\alpha^c$ is contained in $K_\alpha^c \cap E$. Let $\xi$ denote the sum of the vector labels on the at most $4k+2$ edges of $P_n$ that are not contained in the interior of either $K_\alpha$ or $K_\alpha^c$. Then the following inequality holds for each $i \in [k]$.
    \begin{equation}
        \Big\lvert\sum_{e \in K_\alpha \cap O} \left(\rho_i^O(e)\right) + \sum_{e \in K_\alpha^c \cap E} \left(\rho_i^E(e)\right) - \left(\rho_i^O(K_\alpha) + \rho_i^E(K_\alpha^c)\right)\Big\rvert \leq \lvert\xi^{(i)}\rvert
    \end{equation}
    Now, let $M$ be the matching consisting of the edges in $K_\alpha \cap O$ and in $K_\alpha^c \cap E$. It follows that for each $i \in [k]$, we have
    \begin{align*}
        \Big\lvert\sum_{e\in M} \vecf(e)^{(i)} -\sum_{e \in P_n}w(e)\vecf(e)^{(i)}\Big\rvert &= \Big\lvert\sum_{e\in K_\alpha \cap O} \vecf(e)^{(i)} + \sum_{e\in K_\alpha^c \cap E} \vecf(e)^{(i)} - \sum_{e \in P_n}w(e)\vecf(e)^{(i)}\Big\rvert \\
        &= \Big\lvert\sum_{e\in K_\alpha\cap O}\rho_i^O(e) + \sum_{e\in K_\alpha^c\cap E}\rho_i^E(e) - \sum_{e\in P_n} w(e)\vecf(e)^{(i)} \Big\rvert\\ 
        &\leq \Big\lvert\rho_i^O(K_\alpha) + \rho_i^E(K_\alpha^c) -\sum_{e\in P_n} w(e)\vecf(e)^{(i)} \Big\rvert + \lvert\xi^{(i)}\rvert \\
        &= \Big\lvert\alpha\sum_{e\in O}\vecf(e)^{(i)} + (1-\alpha)\sum_{e\in E}\vecf(e)^{(i)} - \sum_{e\in P_n} w(e)\vecf(e)^{(i)} \Big\rvert + \lvert\xi^{(i)}\rvert \\ 
        &= \lvert\xi^{(i)}\rvert.
    \end{align*}
    Since each of the at most $4k+2$ edges of $P_n$ that are not contained in the interior of either $K_\alpha$ or $K_\alpha^c$ satisfies $\|\vecf(e)\|_1 \leq 1$, it follows that $\|\xi\|_1 \leq 4k+2=\bigO(k)$, and so we have achieved the desired error term. Finally, it follows from the construction of $M$ that each of the at most $2k+1$ path segments of $P_n$ contained strictly in the interior of either $K_\alpha$ or $K_\alpha^c$ has at most $2$ vertices not incident with $M$. It follows that $|M| \geq \frac{n}{2} - 2k - 1$. This completes the proof.
\end{proof}

Finally, we prove our result on the complete bipartite graph. 
\vecBipartite*
\begin{proof}
    By \cref{lem:linear_relaxation}, there is an edge-weighting $w$ of $K_{n,n}$ satisfying $\sum_e w(e) h(e) = \frac{1}{n}\vecf(K_{n,n})$ and $0 \leq w(e) \leq 1$ at every edge, and for which the subgraph $H$ of $K_{n,n}$ consisting of edges with fractional weights has cyclomatic number at most $k$. 

    Since $H$ has cyclomatic number at most $k$, we have that $|E(H)| \leq k + |V(H)| - 1$. Take a partial matching $M$ of $K_{n,n}$ consisting of every edge in $K_{n,n}$ with edge-weight $1$, and note that $H$ is vertex disjoint from this matching. Furthermore, since $\sum_{e \ni v} w(e) = 1$ at every vertex $v$ in $K_{n,n}$, it follows that $\sum_{e \ni v} w(e) = 1$ at every vertex $v$ in $H$, and hence that $H$ has no vertices of degree $1$. It follows that the average degree of vertices in $H$ is at least $2$, and since $|E(H)| \leq k + |V(H)| -1$, we have that $H$ has at most $2k$ vertices with degree greater than $2$, and is at most $2k$ edge-deletions away from a graph with maximum degree $2$. By deleting all but one edge at each vertex in $H$ with degree strictly greater than $2$, we remove at most $4k$ edges from $H$ to obtain a graph in which every non-leaf vertex had degree $2$ in $H$. By deleting at most a further $k$ edges, we obtain a disjoint union of paths $F$ with this property. Note that $F$ was obtained from $H$ by deleting at most $5k$ edges, and so $F$ is a union of at most $5k$ paths, $P_1,\dots,P_{5k}$. Since every vertex $v$ in $H$ satisfies $\sum_{e \ni v} w(e) = 1$, it follows that the edge weights on each component $P_i$ of $F$ alternate between $\alpha_i$ and $1-\alpha_i$ along the path, for some $0 < \alpha_i < 1$. 

    Applying \cref{lem:path_bound} to each of the $\bigO(k)$ paths in $F$, we obtain a matching $M'$ of $F$ of size at least $\frac{|V(F)|}{2} - \bigO(k^2)$ satisfying $\|\vecf(M') - \sum_{e \in F}w(e)\vecf(e) \|_1 = \bigO(k^2)$. Since we deleted at most $5k$ edges of weight at most $1$ from $H$ to obtain $F$, it follows that $\|\vecf(M') - \sum_{e \in H}w(e)\vecf(e) \|_1 = \bigO(k^2)$. Observe that the union of $M'$ and $M$ is a matching of $K_{n,n}$. Furthermore, $M$ is a perfect matching of every vertex in $K_{n,n}$ that is not in $H$, and we deleted at most $5k$ edges of weight at most $1$ from $H$ to obtain $F$. So, since $M'$ is at most $\bigO(k^2)$ edges smaller than a perfect matching of $F$, we may obtain from $M' \cup M$ a perfect matching of $K_{n,n}$ by arbitrarily matching the remaining unmatched vertices with at most an additional $\bigO(k^2)$ edges, each of which satisfies $\|\vecf(e)\|_1 \leq 1$. The theorem statement now follows. 
\end{proof}

\section{Embedding bounded-degree (hyper)graphs}\label{sec:other_graphs}

In this section we use \cref{thm:vec-bipartite} to construct almost representative subgraphs in a range of other contexts. We begin with perfect matchings of $K_{2n}$, proving \cref{th:vec-complete}. Next, we generalise this result to obtain bounds for representative perfect matchings of complete $r$-uniform hypergraphs. Finally, we prove \cref{thm:arbitrary-embedding}, for embeddings of bounded-degree spanning graphs in the complete graph. In all cases, our strategy is broadly the same. We begin with a complete (hyper)graph, and take a random partition of the vertex set into some number of parts. With positive probability, the induced multi-partite (hyper)graph is almost representative of the original complete graph. We then apply \cref{thm:vec-bipartite} to embed our graph in this subgraph, with the precise embedding strategy changing in each context.

\subsection{Perfect matchings of complete graphs}

In order to apply \cref{thm:vec-bipartite} to prove \cref{th:vec-complete}, we need to pass from $K_{2n}$ to a $K_{n,n}$ subgraph that is close to representative of the original $K_{2n}$. A straightforward second moment argument shows that this is possible. 

\begin{lemma}\label{lem:vec-partition}
    Let $K_{2n} = (V,E)$, and suppose that $\vecf \colon E \to \R^k$ satisfies $\norm{\vecf(e)}_1\leq 1$ for all $e \in E$. Then $K_{2n}$ contains a complete bipartite subgraph $K_{n,n}$ such that
    \[
        \frac{1}{n}\cdot f_\vecf(K_{n,n}) = \bigO(\sqrt{k}).
    \]
\end{lemma}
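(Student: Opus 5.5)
We choose a uniformly random bisection $V = A \cup B$ with $|A| = |B| = n$ and let $K_{n,n}$ be the complete bipartite graph between $A$ and $B$. Translating labels would break the norm bound, so instead we work with $g(e) = \vecf(e) - \bar{\vecf}$, where $\bar{\vecf} = \vecf(K_{2n})/e(K_{2n})$. Then $\|g(e)\|_1 \le 2$, $\sum_{e\in E} g(e)=0$, and
\[
f_\vecf(K_{n,n}) = \Bigl\|\sum_{e\in E(A,B)} g(e)\Bigr\|_1 .
\]
Let $X = \sum_{e} \mathbb{1}[e \text{ crosses}]\, g(e)$. It suffices to show $\E\|X\|_1 = \bigO(n\sqrt{k})$, since then some bisection achieves this bound. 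By Cauchy--Schwarz,
\[
\E\|X\|_1 \le \sqrt{k}\,\sqrt{\E\|X\|_2^2},
\]
so the target reduces to $\E\|X\|_2^2 = \bigO(n^2)$. Since $\|g(e)\|_2\le\|g(e)\|_1\le 2$, we may treat each coordinate $g^{(i)}$ separately, with $\sum_i |g^{(i)}(e)|\le 2$, and sum over $i$ at the end.

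To compute the second moment, let $p = \Prob(e\text{ crosses}) = n/(2n-1)$ and write $X = \sum_e (\mathbb{1}_e - p) g(e)$, which is valid because $\sum_e g(e)=0$. Then
\[
\E\|X\|_2^2 = \sum_{e,e'} \mathrm{Cov}(\mathbb{1}_e,\mathbb{1}_{e'})\, g(e)\cdot g(e').
\]
We split the pairs $(e,e')$ into three types: $e=e'$, pairs sharing exactly one vertex, and disjoint pairs. The diagonal contributes $\bigO(n^2)$ since there are $\bigO(n^2)$ edges with $\|g(e)\|_2^2\le 4$. For disjoint pairs, the covariance is $\bigO(1/n)$ and depends only on the type of the pair. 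Using $\sum_e g(e)=0$, the sum of $g(e)\cdot g(e')$ over disjoint pairs equals minus the sum over intersecting pairs, which is $\bigO(n^3)$. So disjoint pairs contribute $\bigO(1/n)\cdot\bigO(n^3)=\bigO(n^2)$. For pairs sharing a vertex $v$, the covariance is also $\bigO(1/n)$ (it is $\Prob(\text{both cross}) - p^2 = \tfrac{n-1}{2(2n-1)}\cdot\ldots - p^2$). The number of such pairs is $\bigO(n^3)$, so the contribution is $\bigO(n^2)$.

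The main obstacle is making the covariance bookkeeping clean, because the three constants must combine correctly. To avoid this, it may be easier to first analyse an independent uniform $\pm1$ assignment $\sigma$, where $\mathbb{1}_e = (1-\sigma_u\sigma_v)/2$. Then
\[
X = -\tfrac12\sum_{uv}\sigma_u\sigma_v\, g(uv) + \tfrac12\sum_e g(e) = -\tfrac12\sum_{uv}\sigma_u\sigma_v\, g(uv),
\]
and the terms $\sigma_u\sigma_v$ are pairwise orthogonal, so $\E\|X\|_2^2 = \tfrac14\sum_e\|g(e)\|_2^2 = \bigO(n^2)$ exactly. The bisection constraint is then handled by conditioning. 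We can condition on $\sum_v \sigma_v=0$, which has probability $\Theta(1/\sqrt n)$, but naive conditioning loses a $\sqrt n$ factor. Alternatively, we can move the $\bigO(\sqrt n)$ excess vertices, which in expectation changes $X$ by $\bigO(\sqrt n\cdot n)$ in $\ell_1$; this is too much. So we will rely on the direct hypergeometric covariance computation above, possibly written as $\mathbb{1}_e - p = -\tfrac12\sigma_u\sigma_v + \bigO(1/n)$ with $\sigma$ a uniform balanced $\pm1$ vector. In that form, $\E[\sigma_u\sigma_v\sigma_x\sigma_y] = \bigO(1/n^2)$ for distinct vertices and $\E[\sigma_u\sigma_v\sigma_u\sigma_y] = \E[\sigma_v\sigma_y] = -1/(2n-1)$. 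This yields the three-type bound directly.
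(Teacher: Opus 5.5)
Your proof is correct and follows essentially the paper's route: a uniformly random bisection, then $\|X\|_1\le\sqrt{k}\,\|X\|_2$, then a second-moment computation that splits ordered pairs $(e,e')$ by $|e\cap e'|\in\{0,1,2\}$ and uses $\sum_e g(e)=0$. The paper does exact bookkeeping, rewriting the incident-pair contribution via $\sum_v d_v^2$ (with $d_v=\sum_{e\ni v}g(e)$) and noting its coefficient is negative, which gives an explicit constant; you bound each pair type crudely in absolute value, which is equally valid for an $O(\cdot)$ statement.

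Two harmless slips:
\begin{itemize}
\item The disjoint-pair covariance is actually $p\bigl(\tfrac{n-1}{2n-3}-p\bigr)=\Theta(1/n^2)$, so you do not even need the zero-sum trick for that pair type.
\item The probability that two incident edges both cross is $\tfrac{n}{2(2n-1)}$, not the expression you started to write.
\end{itemize}
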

\begin{proof}
    Note that $f_\vecf$ is invariant under shifting $\vecf$ by a constant vector, and scales linearly if $\vecf$ is scaled by a constant. Replace $\vecf$ with $\frac{1}{2}(\vecf-\bar\vecf)$, where $\bar\vecf$ is the average value of $\vecf(e)$. It follows that $\vecf(K_{2n})=0$ and $\norm{\vecf(e)}_1\leq 1$ for all $e\in E$. We show that, under these assumptions, $K_{2n}$ contains a $K_{n,n}$ subgraph such that $\frac{1}{n}f_\vecf(K_{n,n}) = \|\frac{1}{n}\vecf(K_{n,n})\|_1=\bigO(\sqrt{k})$. Consider a random equipartition $(A,B)$ of $V$ and let $K_{n,n}$ have vertex parts $A$ and $B$.
    
    For conciseness, we denote $\vecf(e)$ by $\vecf_e$ and for each $v \in V$, let $d_v=\sum_{e\ni v}\vecf_e$. 
    For each $e\in E$, define $X_e$ to be equal to $\vecf_e$ if $e\in E(A,B)$ and $0$ otherwise, so that ${X\coloneq\sum_{e\in E}X_e=\vecf(K_{n,n})}$ is the quantity we are aiming to minimise. 
    Let $p=\frac{n}{2n-1}$ denote the probability that a fixed edge $e$ has endpoints in both $A$ and $B$, and for $s\in\{0,1,2\}$ let $p_s$ be the probability that an edge $e'$ has endpoints in both $A$ and $B$, given that $e$ does, and that $|e'\cap e|=s$. Thus $p_0=\frac{n-1}{2n-3}$ for disjoint $e$ and $e'$, $p_1=\frac{1}{2}$ for incident edges and $p_2=1$ for $e'=e$.
    For any $e,e'\in E$ with $|e\cap e'|=s$, we have that
    \[
        \E X_e X_{e'} = p\cdot p_s\cdot \vecf_e \vecf_{e'}.
    \]
    Since $\sum_{e,e'}h_eh_{e'}=\left(\sum_e h_e\right)^2=0$ and  $h_e^2\leq\|h_e\|_1\leq 1$ for all $e\in E$, it follows that 
    \begin{align*}
        \E \|X\|_2^2 = \E\left[\sum_{e,e'\in E}X_e X_{e'}\right] &= p\left(p_0\sum_{e,e'\text{ disj.}}\vecf_e \vecf_{e'} + p_1\sum_{e,e'\text{ inc.}}\vecf_e \vecf_{e'} + p_2\sum_{e\in E}\vecf_e^2\right)\\
        &= p\left(p_0\sum_{e,e'\in E}\vecf_e\vecf_{e'}+(p_1-p_0)\sum_{v\in V}d_v^2+(p_0-2p_1+p_2)\sum_{e\in E}\vecf_e^2\right) \\ &\leq p (p_2 -2p_1+p_0)\sum_{e\in E}\vecf_e^2 \leq p\cdot p_0\cdot|E|<n^2.
    \end{align*}
    Hence, there exists an equipartition of $V$ such that $\|X\|_2 < n$, and thus a $K_{n,n}$ subgraph of $K_{2n}$ such that $f_\vecf(K_{n,n})=\|X\|_1\leq\sqrt{k}\|X\|_2<\sqrt{k}\cdot n$, as desired. \qedhere
\end{proof}

\vecComplete*
\begin{proof}
    Apply \cref{lem:vec-partition} to find a $K_{n,n}$ subgraph of $K_{2n}$ such that $\frac{1}{n}f_\vecf(K_{n,n}) = \bigO(\sqrt{k})$.
    Applying \cref{thm:vec-bipartite} to $K_{n,n}$, we obtain a perfect matching $M$ with the required property.
\end{proof}
\cref{thm:complete_colours} now follows from \cref{th:vec-complete} by assigning the basis vector $b_i$ to each edge of colour $i$.

\subsection{Perfect matchings of complete hypergraphs}

We now generalise our approach to obtain bounds for representative perfect matchings in complete hypergraphs. The $r$-uniform \defn{complete hypergraph} $K_n^{(r)}$ is the $n$-vertex hypergraph whose hyperedge set consists of all $r$-vertex subsets of $V(K_n^{(r)})$. An $r$-uniform \defn{$r$-partite} hypergraph is a hypergraph with $r$ vertex parts such that every edge contains exactly one vertex from each of the $r$ parts. We call an $r$-partite hypergraph \defn{complete} if every possible such edge is in the edge-set. A perfect matching of an $r$-uniform hypergraph $\mathcal{H}$ is a set of pairwise vertex-disjoint hyperedges containing every vertex of $\mathcal{H}$. 

We begin with a partitioning lemma, which generalises \cref{lem:vec-partition} to complete $r$-uniform hypergraphs.
\begin{lemma}\label{lem:hyper-partition}
    Let $\vecf\colon E(K_{rn}^{(r)})\to\R^{k}$ satisfy $\|\vecf(e)\|_1\leq 1$ for each edge $e$. Then there is a balanced complete $r$-partite subhypergraph $H$ satisfying $f_h(H)/(nr)^{r-1}=\bigO(\sqrt{k})$.
\end{lemma}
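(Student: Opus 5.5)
We mimic the second moment argument of \cref{lem:vec-partition}. First normalise: since $f_\vecf$ is invariant under adding a constant vector and scales linearly, we replace $\vecf$ by $\frac12(\vecf-\bar\vecf)$, where $\bar\vecf$ is the average label. This gives $\vecf(K_{rn}^{(r)})=0$ and $\|\vecf_e\|_1\le 1$, hence $\vecf_e^2\le 1$. It then suffices to find a balanced complete $r$-partite subhypergraph $H$ with $\|\vecf(H)\|_1 = \bigO(\sqrt{k}\,(nr)^{r-1})$. Take a uniformly random equipartition $(V_1,\dots,V_r)$ of $V$ into parts of size $n$. Let $X_e=\vecf_e$ if $e$ meets every part (is transversal) and $0$ otherwise, and let $X=\sum_e X_e=\vecf(H)$.

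We compute $\E\|X\|_2^2=\sum_{e,e'} \Pr[e,e' \text{ both transversal}]\,\vecf_e\vecf_{e'}$. By symmetry this probability depends only on $s=|e\cap e'|$; call it $q_s$. Grouping pairs by intersection size gives
\[
\E\|X\|_2^2=\sum_{s=0}^{r} q_s\, S_s,\qquad S_s=\sum_{|e\cap e'|=s}\vecf_e\vecf_{e'} .
\]
As in the graph case, we express each $S_s$ through the quantities $T_j=\sum_{|U|=j}\|d_U\|_2^2$ for $0\le j\le r$, where $d_U=\sum_{e\supseteq U}\vecf_e$. Inclusion--exclusion gives $T_j=\sum_{s\ge j}\binom{s}{j}S_s$, so $S_s=\sum_{j\ge s}(-1)^{j-s}\binom{j}{s}T_j$. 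Therefore
\[
\E\|X\|_2^2=\sum_j c_j T_j,\qquad c_j=\sum_{s\le j}(-1)^{j-s}\binom{j}{s}q_s,
\]
which is the $j$-th forward difference of $(q_s)$. Here $T_0=\|\vecf(K)\|_2^2=0$.

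The main obstacle is controlling the remaining terms $T_j$ for $1\le j\le r$. Trivially $\|d_U\|_2\le \binom{rn-j}{r-j}$, which gives $T_j=\bigO_r(n^{2r-j})$. We therefore need $|c_j|=\bigO_r(n^{-(r-j)})\cdot(\text{the base transversal probability } q_0\approx r!/r^r)$, which, since $q_0=\Theta_r(1)$, amounts to $|c_j|=\bigO_r(n^{-(r-j)})$. Then every term is $\bigO_r(n^{2r-2})$, with the $j=r$ term giving $\sum_e\vecf_e^2\le\binom{rn}{r}$ at a constant coefficient. The plan is to compute $q_s$ exactly: given that $e$ is transversal and $e'$ shares $s$ vertices with it, the probability that the remaining $r-s$ vertices of $e'$ land in the $r-s$ parts missed by the shared vertices is a ratio of falling factorials in $n$. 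Such a ratio is a rational function of $1/n$ whose limit is independent of $s$, and it is perturbed at order $1/n$ in a way polynomial in $s$. Its $j$-th finite difference in $s$ should vanish to order $n^{-(r-j)}$ only when combined with the correct scaling. In the graph case this was exactly the sign argument: $c_1=p(p_1-p_0)$ was not small, but it came paired with $\sum_v d_v^2$, and it had a favourable sign. We would first attempt the sign/size bookkeeping: either show $c_j\le 0$ or $|c_j|=\bigO(n^{-(r-j)})$ for each $j$, using that $T_j\ge 0$. If necessary, we would weaken the claimed bound to $\bigO_r$, since $r$ is fixed.

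Granting this, $\E\|X\|_2^2=\bigO_r(n^{2r-2})$. Hence some equipartition has $\|X\|_2=\bigO_r(n^{r-1})$, and therefore $f_\vecf(H)=\|X\|_1\le\sqrt{k}\,\|X\|_2=\bigO(\sqrt{k}\,(nr)^{r-1})$ with the $r$-dependence absorbed. Undoing the normalisation changes the bound only by a factor of $2$. Note that $f_\vecf(H)$ measures deviation relative to $e(H)/e(K)$, which is exactly $\|\vecf(H)\|_1$ after centring.
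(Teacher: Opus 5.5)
Your framework (centring, a random equipartition, grouping pairs by $s=|e\cap e'|$, and inverting to $\E\|X\|_2^2=\sum_j c_jT_j$ with $T_0=0$) is exactly the paper's. There is a genuine gap, though: the only step with real content is left as ``granting this'', and the criterion you set for it has the wrong exponent and is in fact false.

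\emph{The criterion is wrong.} Since $T_j=\bigO_r(n^{2r-j})$, what you need is $c_j=\bigO(n^{j-2})$. Your requirement $|c_j|=\bigO(n^{-(r-j)})$ only gives $c_jT_j=\bigO(n^{r})$, and it cannot be met for $r\ge 3$.

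\emph{Why it cannot be met.} Write $q_s=p\,p_s$, where $p$ is the probability that $e$ is transversal and $p_s=(n-1)^{r-s}/\binom{r(n-1)}{r-s}$ is the conditional probability that $e'$ is transversal. Then $p_s\to(r-s)!/r^{r-s}$, which depends on $s$ once $s\ge 2$, contrary to your heuristic. Consequently $c_2=q_0-2q_1+q_2$ tends to a positive constant. Neither branch of your dichotomy ($c_2\le 0$ or $|c_2|=\bigO(n^{-(r-2)})$) can be established at $j=2$. At $j=1$ only the sign branch is available, because $c_1$ is of order exactly $1/n$.

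\emph{How to repair it.} The fix is short, and it is what the paper does.
For $j\ge 2$ nothing is needed: $0\le q_s\le 1$ gives $|c_j|\le 2^j$, while $T_j\le\binom{rn}{j}\binom{rn-j}{r-j}^2=\bigO_r(n^{2r-2})$.
For $j=1$ you need the explicit ratio $p_1/p_0=1-\frac{r-1}{r(n-1)}$. Hence $c_1=p(p_1-p_0)\le 0$, and since $T_1\ge 0$ the $j=1$ term can simply be dropped. Alternatively, $|c_1|=\bigO(1/n)$ against $T_1=\bigO(n^{2r-1})$ also suffices.
This gives $\E\|X\|_2^2=\bigO_r(n^{2r-2})$, and the rest of your argument goes through. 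The paper additionally truncates at $s\le 2$ and bounds the $T_2$ term (its $D_2$) by $2\cdot\frac12(nr)^{2r-2}$, which yields the explicit leading constant $(nr)^{2r-2}$.
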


\begin{proof}
    As in the proof of \cref{lem:vec-partition}, by shifting and scaling $h$ we may assume that $\vecf(K_{rn}^{(r)})=0$ and $\norm{\vecf(e)}_1\leq 1$ for all $e \in E(K_{rn}^{(r)})$. Consider a random equipartition of $V(K_{rn}^{(r)})$ into $r$ vertex parts. For each edge $e \in E(K_{rn}^{(r)})$, define $X_e$ to be $h(e)$ if $e$ spans all $r$ vertex parts and $0$ otherwise and let $X=\sum_{e\in E}X_e$. Suppose $e$ and $e'$ are any two fixed edges in $E(K_{rn}^{(r)})$.
    Let ${p=n^r/\binom{rn}{r}}$ denote the probability that $e$ is spanning, and let ${p_s=(n-1)^{r-s}/\binom{(n-1)r}{r-s}}$ denote the probability that $e'$ is spanning given that $e$ is spanning and $|e \cap e'| = s$. 
    We will bound $\E\left(\|X\|_2^2\right)$. First note that we have
    \[
        \E X_e X_{e'} = \vecf(e)\vecf(e')\cdot p\cdot p_s.
    \]
     Letting $\sigma_s\coloneq\sum_{e,e'\colon |e\cap e'|=s}h(e) h(e')=\bigO(n^{2r-s})$, it follows that
     \begin{equation}\label{eq:hypergraph_expectation}
        \E\|X\|_2^2 = \sum_{e,e'}\E X_e X_{e'} = p\cdot\sum_{s=0}^r p_s\sigma_s = p\cdot\left(p_0\sigma_0+p_1\sigma_1+p_2\sigma_2+\bigO(n^{2r-3})\right).
     \end{equation}
     To complete the proof, we require more precise bounds for the values of the first three terms. As in the proof of \cref{lem:vec-partition}, we will write each term in terms of the `degrees' of subsets of the vertex set. For each set $S \subseteq V(K_{rn}^{(r)})$, we consider $\sum_{e \supseteq S}h(e)$ to be its `degree' in $K_{rn}^{(r)}$, analogous to the $d_v$ terms used in \cref{lem:vec-partition}. We introduce the sum of squares of these terms.
     \begin{align*}
         D_s \coloneq \sum_{S\colon|S|=s} \left(\sum_{e \supseteq S}h(e)\right)^2
         = \sum_{e,e'} {|e \cap e'| \choose s}h(e)h(e')
         = \sum_{j=0}^r \binom{j}{s}\sigma_j
         .
     \end{align*}
     Applying a standard binomial inversion identity to $D_s$, we obtain
     \[
        \sigma_s = \sum_{j=0}^r (-1)^{s+j}\binom{j}{s}D_j.
     \]
     Finally, observe that $p_1=(1-\frac{r-1}{(n-1)r})p_0$, and so $p_0\geq p_1$. Furthermore, we have that ${D_0=0}$, $D_1\geq 0$ and $D_2 \leq \binom{nr}{2}\binom{nr-2}{r-2}^2\leq\frac{1}{2}(nr)^{2r-2}$. Substituting these values into \eqref{eq:hypergraph_expectation}, we get
     \begin{align*}
         \E\|X\|_2^2  &= p\cdot\left(p_0 D_0+(-p_0+p_1)D_1 + (p_0-2p_1+p_2)D_2 \right)+\bigO(n^{2r-3})\\
         &\leq   (nr)^{2r-2} + \bigO(n^{2r-3}).
    \end{align*}
    Hence, there exists a partition such that $\|X\|_2=\bigO((nr)^{r-1})$. Letting $H$ be the complete $r$-partite hypergraph with this partition, we have $f_\vecf(H)=\|X\|_1\leq{\sqrt{k}\|X\|_2}={\bigO(\sqrt{k}\cdot (nr)^{r-1})}$.
\end{proof}

We again use \cref{thm:vec-bipartite} to embed our matching into this $r$-partite subgraph. This time, we proceed iteratively, applying \cref{thm:vec-bipartite} at each stage to embed the matching one vertex part at a time.
\hypergraph*
\begin{proof}
    Partition $V(K_{rn}^{(r)})$ into vertex parts $V_1,V_2,\dots,V_r$ each of size $n$ such that the subgraph $H$ of $K_{rn}^{(r)}$ whose edge-set is every edge of $K_{rn}^{(r)}$ containing exactly one vertex from each of the $r$ vertex parts satisfies $f_\vecf (H) = \bigO((rn)^{r-1}\sqrt{k})$. Such a subgraph $H$ exists by \cref{lem:hyper-partition}. Our proof will proceed by induction, applying \cref{thm:vec-bipartite} to complete the induction step, with the following hypothesis.
    \begin{claim}
        Let $H^{(r)}$ be an $rn$-vertex complete $r$-partite $r$-uniform hypergraph with vertex parts of size $n$, and let $\vecf \colon E(H^{(r)}) \to \R^k$ such that $\| \vecf(e) \|_1 \leq 1$ for all $e \in E(H^{(r)})$. Then there is a perfect matching $M$ of $H^{(r)}$ satisfying $$f_\vecf(M) \leq Crk^2$$
        for some absolute constant $C$.
    \end{claim}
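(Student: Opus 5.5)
We prove the claim by induction on $r$, with base case $r=2$ given directly by \cref{thm:vec-bipartite} (a complete bipartite $2$-uniform graph with parts of size $n$ is $K_{n,n}$, so we may take $C$ at least the constant of that theorem). For $r=1$ the statement is trivial: the unique perfect matching consists of all edges and has zero error.

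For the induction step, let $H^{(r)}$ have parts $V_1,\dots,V_r$, each of size $n$. We first merge $V_{r-1}$ and $V_r$ by choosing a bijection $\phi\colon V_{r-1}\to V_r$. For a fixed $\phi$, the pairs $\{v,\phi(v)\}$ act as the vertices of a merged part $V'$. This gives an $(r-1)$-partite $(r-1)$-uniform complete hypergraph $H_\phi$ on $V_1,\dots,V_{r-2},V'$, whose edge $\{v_1,\dots,v_{r-2},\{u,\phi(u)\}\}$ carries the label $\vecf(\{v_1,\dots,v_{r-2},u,\phi(u)\})$. Perfect matchings of $H_\phi$ correspond exactly to perfect matchings of $H^{(r)}$ that pair $V_{r-1}$ with $V_r$ via $\phi$. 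We choose $\phi$ so that $H_\phi$ is representative of $H^{(r)}$ up to $\bigO(k^2)$ error, after rescaling.

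To choose $\phi$, apply \cref{thm:vec-bipartite} to $K_{n,n}$ on $V_{r-1}\cup V_r$ with the label
\[ g(\{u,u'\}) = n^{-(r-2)}\sum_{v_1\in V_1,\dots,v_{r-2}\in V_{r-2}} \vecf(\{v_1,\dots,v_{r-2},u,u'\}). \]
Each $g(e)$ is an average of vectors of $\ell_1$ norm at most $1$, so $\|g(e)\|_1\le 1$. The theorem gives a perfect matching $\phi$ with
\[ \Bigl\|g(\phi)-\tfrac1n g(K_{n,n})\Bigr\|_1=\bigO(k^2). \]
By construction $\vecf(H_\phi)=n^{r-2}g(\phi)$ and $\vecf(H^{(r)})=n^{r-2}g(K_{n,n})$. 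Since $e(H_\phi)/e(H^{(r)})=1/n$, this gives
\[ \Bigl\|\tfrac{\vecf(H_\phi)}{e(H_\phi)}-\tfrac{\vecf(H^{(r)})}{e(H^{(r)})}\Bigr\|_1 = n^{-(r-2)}\,\bigO(k^2)\cdot n^{r-2}/n^{r-1}\cdot n^{r-2}\!, \]
that is, an average deviation of $\bigO(k^2)/n$ per edge. Next, apply the induction hypothesis to $H_\phi$ to get a perfect matching $M$ with $f_\vecf(M)\le C(r-1)k^2$ relative to $H_\phi$. Since $|M|=n$, the difference between measuring representativeness relative to $H_\phi$ and relative to $H^{(r)}$ is $n$ times the per-edge average deviation, namely $\|g(\phi)-\tfrac1n g(K_{n,n})\|_1\le C_0k^2$, where $C_0$ is the constant from \cref{thm:vec-bipartite}. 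By the triangle inequality, $f_\vecf(M)\le C(r-1)k^2 + C_0k^2\le Crk^2$ once $C\ge C_0$.

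The main obstacle is the bookkeeping in this last step: we must check that the errors add linearly in $r$ rather than compounding, which rests on the normalisation $|M|=n$ at every level. We also need the averaged label $g$ to keep $\ell_1$ norm at most $1$, so that the same constant $C_0$ applies at each step. After the claim, \cref{thm:hypergraph_matching} follows by combining it with \cref{lem:hyper-partition}. The $r$-partite $H$ from that lemma has $f_\vecf(H)/e(H)\cdot n = \bigO(\sqrt{k}\,(nr)^{r-1}n/n^r)=\bigO_r(\sqrt{k})$, so passing from $H$ to $K_{rn}^{(r)}$ costs only an extra $\bigO_r(\sqrt k)$, again by the triangle inequality with $|M|=n$.
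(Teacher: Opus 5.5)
Your proof is correct and is essentially the paper's argument with the two steps done in the opposite order. The paper first applies the induction hypothesis to $U_1,\dots,U_{r-1}$ with labels averaged over $U_r$, and then uses \cref{thm:vec-bipartite} to attach $U_r$ to the edges of that matching; you instead first fix the pairing $\phi$ of $V_{r-1}$ with $V_r$ via \cref{thm:vec-bipartite} (labels averaged over the other $r-2$ parts) and then recurse on the merged hypergraph $H_\phi$, and in both versions the errors add by the triangle inequality to give $Crk^2$. The only blemish is your middle display, which should simply read $\frac{1}{n}\|g(\phi)-\tfrac{1}{n}g(K_{n,n})\|_1$; the quantity you actually use in the final triangle-inequality step is the correct one.
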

    \begin{poc}
        The claim holds when $r = 2$ by \cref{thm:vec-bipartite}, so suppose $r > 2$ and that the claim holds for $r-1$. Let the vertex parts of $H^{(r)}$ be $U_1,\dots,U_r$, each of size $n$, and consider the complete $(r-1)$-partite, $(r-1)$-uniform hypergraph $H^{(r-1)}$ defined on vertex parts $U_1,\dots,U_{r-1}$. Define $\vecf_1 \colon E(H^{(r-1)}) \to \R^k$ as follows. For all $e'$ in $E(H^{(r-1)})$, we have
        \[\vecf_1(e') = \frac{1}{n}\sum_{e \in H^{(r)} : e' \subseteq e} \vecf(e).\]
        Applying the induction hypothesis, and the fact that $\vecf_1(H^{(r-1)}) = \frac{1}{n}\vecf(H^{(r)})$, we obtain a perfect matching $M_1$ of $H^{(r-1)}$ satisfying 
        \begin{equation}\label{eq:hypergraph_1}
            f_{\vecf_1}(M_1) = \left\|\vecf_1(M_1) - \frac{e(M_1)}{e(H^{(r-1)})}\frac{1}{n}\vecf(H^{(r)}) \right\|_1 \leq C(r-1)k^2.
        \end{equation}
        
        We now construct from $H^{(r)}$ an auxiliary bipartite graph $K_{n,n}$, whose vertex parts are $E(M_1)$ and $U_r$. Each edge $(e', u)$ in $K_{n,n}$ with $e' \in E(M_1)$ and $u \in U_r$ corresponds to a hyperedge $e' \cup u$ in $H^{(r)}$. We therefore define $\vecf_2((e',u)) = \vecf(e' \cup u)$ on each edge $(e',u)$. Note that by definition of $\vecf_1$, we have that ${\vecf_2(K_{n,n}) = n\cdot \vecf_1(M_1)}$. By applying \cref{thm:vec-bipartite} to $K_{n,n}$, we obtain a matching $M_2$ of $K_{n,n}$ satisfying 
        \begin{equation}\label{eq:hypergraph_2}
            f_{\vecf_2}(M_2) = \left\|\vecf_2(M_2) - \frac{1}{n} \vecf_2(K_{n,n})\right\|_1 = \|\vecf_2(M_2) - \vecf_1(M_1)\|_1 \leq Ck^2.
        \end{equation}
        Let $M$ be the perfect matching of $H^{(r)}$ consisting of every edge $e' \cup u$ for some $(e', u)$ in $M_2$. By definition of $\vecf_2$, we have $\vecf(M) = \vecf_2(M_2)$. Noting that $\frac{e(M)}{e(H^{(r)})} = \frac{1}{n}\frac{e(M_1)}{e(H^{(r-1)})}$, and combining inequalities \ref{eq:hypergraph_1} and \ref{eq:hypergraph_2}, we achieve $$f_\vecf(M) \leq Ck^2 + C(r-1)k^2 = Crk^2,$$ as desired. 
    \end{poc}
    The theorem now follows immediately by applying the claim to $H$.
\end{proof}

\subsection{Embedding other spanning subgraphs}

We now consider the problem of finding a representative copy of an arbitrary spanning subgraph with bounded maximum degree. Our strategy is again broadly the same -- we find a suitable partition of $V(K_n)$, and then embed our copy of $H$ into the resulting partite graph. However, both partitioning and embedding are more difficult in this setting.

We begin with our embedding method. Let $H$ be an $n$-vertex $r$-partite graph with vertex parts $U_1,\dots,U_r$, and let $G$ be an $n$-vertex graph with vertex partition $V_1,\dots,V_r$ such that $|U_i| = |V_i|$ for each $i \in [r]$. A \defn{partwise embedding} of $H$ into $G$ is an embedding of $H$ in $G$ such that for each $i$, the vertex part $U_i$ is mapped bijectively into $V_i$. We will always work with graphs with similarly labelled vertex parts, so even when multiple parts have the same size, there is no ambiguity about which vertex parts of $H$ will be embedded in which vertex parts of $G$. We show that if $H$ has maximum degree $\Delta$, we can find a partwise embedding of $H$ into a vector-labelled $r$-partite $G$ that is close to representative of the average partwise embedding of $H$ into $G$.
\begin{theorem}\label{thm:arbitrary-embedding}
    Let $G$ be the complete $n$-vertex $r$-partite graph with vertex parts $V_1, V_2, \dots, V_r$, and let $\vecf \colon E(G) \to \mathbb{R}^k$ be such that $\| \vecf(e) \|_1 \leq 1$ for all $e \in E(G)$. Let $H$ be an $n$-vertex $r$-partite graph with maximum degree $\Delta$ and vertex parts $U_1, U_2, \dots, U_r$ such that $|V_i| = |U_i|$ for all $i \in [r]$. Then there is a partwise embedding $H'$ of $H$ in $G$ such that
    \[
        \|\vecf(H') -\mu^{pw}_\vecf(H)\|_1=\bigO(\Delta\cdot r\cdot k^2),
    \] where $\mu^{pw}_\vecf(H)$ denotes the average value of $h(H)$ over all partwise embeddings of $H$ into $G$. 
\end{theorem}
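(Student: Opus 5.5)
We embed $H$ one part at a time, $U_1$ first, then $U_2$, and so on, applying \cref{thm:vec-bipartite} at each step. The invariant is that the expected label of a uniformly random completion of the partial embedding stays within the accumulated error of $\mu^{pw}_\vecf(H)$. Concretely, suppose $\phi_{i-1}$ is a partwise bijection of $U_1\cup\dots\cup U_{i-1}$ onto $V_1\cup\dots\cup V_{i-1}$. Define
\[
\Phi(\phi_{i-1}) \coloneq \E\,\vecf(H'),
\]
where $H'$ is a uniformly random partwise embedding extending $\phi_{i-1}$. Then $\Phi(\emptyset)=\mu^{pw}_\vecf(H)$, and $\Phi(\phi_r)=\vecf(H')$ for the final embedding. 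It therefore suffices to extend $\phi_{i-1}$ to $\phi_i$ so that $\|\Phi(\phi_i)-\Phi(\phi_{i-1})\|_1=\bigO(\Delta k^2)$. Summing over the $r$ steps then gives $\bigO(\Delta r k^2)$.

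For the extension step, decompose $\Phi(\phi)$ edge by edge. Edges of $H$ with both endpoints already embedded contribute fixed amounts. An edge $uu'$ with $u\in U_i$ and $u'$ either already embedded or in a later part $U_j$ ($j>i$) contributes as follows. If $u$ is mapped to $v\in V_i$, the contribution is $\vecf(v\phi(u'))$ when $u'$ is embedded, and $\frac{1}{|V_j|}\sum_{x\in V_j}\vecf(vx)$ otherwise. Edges between two parts that are both still unembedded give contributions independent of $\phi_i$. Hence $\Phi(\phi_i)$ equals a constant plus $\sum_{u\in U_i} g(u,\phi_i(u))$, where
\[
g(u,v)=\sum_{u'\in N(u)\setminus U_i} \vecf\text{-average of the edge } uu' \text{ as described above}.
\]
Each $g(u,v)$ is a sum of at most $\Delta$ vectors of $\ell_1$-norm at most $1$, so $\|g(u,v)\|_1\le\Delta$. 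Moreover, averaging over a uniform bijection $U_i\to V_i$ gives exactly $\Phi(\phi_{i-1})$, since a uniformly random completion restricts to a uniform bijection on $U_i$, and this bijection is independent of the later parts' placements in expectation.

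Now view $g/\Delta$ as a labelling of the complete bipartite graph $K_{|U_i|,|V_i|}$ with $\ell_1$-norms at most $1$. By \cref{thm:vec-bipartite}, there is a perfect matching, i.e.\ a bijection $\phi_i|_{U_i}$, with
\[
\Bigl\|\sum_{u} g(u,\phi_i(u)) - \frac{1}{|U_i|}\sum_{u,v} g(u,v)\Bigr\|_1=\bigO(\Delta k^2),
\]
and the subtracted term is exactly the uniform-bijection average. This closes the induction.

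The main point to verify carefully is the linearity claim: conditional expectations over later parts must not depend on the choice within $U_i$ beyond the additive terms $g(u,\phi_i(u))$. This holds because $H$ is $r$-partite, so there are no edges inside $U_i$, and because uniform random bijections on distinct parts are independent. Once this is checked, everything else is bookkeeping.
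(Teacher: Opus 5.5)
Your proposal is correct and is essentially the paper's proof. The paper also fixes $U_1,\dots,U_r$ in turn and labels the auxiliary $K_{|U_i|,|V_i|}$ by $h_i(a,b)=\sum_v p_{a,bv}\,h(bv)$, which is exactly your $g(a,b)$ with $\|h_i\|_1\le\Delta$; it then applies \cref{thm:vec-bipartite} to $h_i/\Delta$ and telescopes the conditional averages $\mu^{pw}_\vecf(H\mid\varphi_i)$ over the $r$ steps. One case is missing from your list: edges between an already-embedded part and a later part $U_j$ with $j>i$. Their contribution is also independent of $\phi_i$, so your decomposition into a constant plus $\sum_u g(u,\phi_i(u))$ still holds.
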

\begin{proof}
    Analogously to the start of \cref{lem:vec-partition}, by shifting and scaling $h$ we may assume that $\mu^{pw}_\vecf(H) = 0$. We say an embedding $\varphi$ that maps a subset $A$ of $V(H)$ injectively into $V(G)$ is a \defn{partial partwise embedding} if, for each $i \in [r]$, we have $\varphi(A \cap U_i) \subseteq V_i$. Our strategy is to construct a partwise embedding $\varphi$ that maps $V(H)$ into $V(G)$ by constructing $\varphi$ iteratively, at each stage considering a partial partwise embedding ${\varphi_{i-1} : \bigcup_{j=1}^{i-1}U_j \to \bigcup_{j=1}^{i-1}V_j}$. 

    Define $\varphi_0$ to be an embedding of the empty vertex set into $V(G)$. An \defn{extension} $\varphi'$ of $\varphi_{i-1}$ is any partial partwise embedding of $A \subseteq V(H)$ into $V(G)$ such that $\bigcup_{j=1}^{i-1}U_j \subseteq A$ and $\varphi'$ restricts to $\varphi_{i-1}$ on $\bigcup_{j=1}^{i-1}U_j$. A \defn{complete extension} of $\varphi_{i-1}$ is an extension of $\varphi_{i-1}$ that bijectively maps the complete vertex set $V(H)$ into $V(G)$. We will construct the extension $\varphi_i$ of $\varphi_{i-1}$ by determining a bijection from $U_i$ into $V_i$ such that the remaining set of complete extensions of $\varphi_i$ is still close to representative of the original edge-set of $G$. For each partial partwise embedding $\varphi_i$, we denote by $\mu^{pw}_\vecf(H \mid \varphi_i)$ the average value of $h(\varphi_r(H))$ over every possible complete extension $\varphi_r$ of $\varphi_i$.
    
    For each $i \in [r]$, we construct a partial partwise embedding $\varphi_i\colon\bigcup_{j=1}^{i}U_j\to\bigcup_{j=1}^{i}V_j$ by extending $\varphi_{i-1}$ as follows. Let $B'=\bigcup_{j=1}^{i-1}V_j$ denote the set of vertices of $G$ contained in vertex parts in the image of $\varphi_{i-1}$. For a vertex $a \in U_i$ and an edge $e=(b,v)$ of $G$ with $b\in V_i$, let $p_{a,e}$ denote the probability that $e$ is contained in the image of a uniform random complete extension of $\varphi_{i-1}$ that embeds $a$ at $b$. Explicitly, 
    \begin{equation*}
        p_{a,e} = 
        \begin{cases}
            0 & v \in B' \text{ and } (a, \varphi_{i-1}^{-1}(v)) \notin E(H), \\
            1 & v \in B' \text{ and } (a, \varphi_{i-1}^{-1}(v)) \in E(H), \\
            \frac{|E(\{a\},U_j)|}{|V_j|} & v\in V_j \text{ with } j>i.
        \end{cases}
    \end{equation*}

    Consider the auxiliary complete bipartite graph $K^{(i)}$ with vertex parts $U_i$ and $V_i$. For each edge $q=(a,b)$ of $K^{(i)}$, define $\vecf_i\colon E(K^{(i)})\to\R^k$ by 
    \[
        \vecf_i(q) = \sum_{v \in V(G)}p_{a,bv}\cdot\vecf(bv).
    \]
    Observe that for any $q\in E(K^{(i)})$ we have $\|\vecf_i(q)\|_1\leq \sum_{v\in V(G)}p_{a,bv} = d_{H}(a) \leq \Delta$. Furthermore, we have $\frac{1}{|V_i|}\sum_{q\in E(K^{(i)})} \vecf_i(q)=\mu^{pw}_\vecf(E(U_i,V(H)\setminus U_i)\mid\varphi_{i-1})$. By applying \cref{thm:vec-bipartite} to $K^{(i)}$ and $\frac{1}{\Delta}\vecf_i$, we obtain a perfect matching $M$ of $K^{(i)}$ that satisfies 
    \[
        \| \vecf_i(M)-\mu^{pw}_\vecf(E(U_i,V(H)\setminus U_i)\mid\varphi_{i-1})\|_1=\bigO(\Delta\cdot k^2).
    \]
    Finally, we define $\varphi_i$ to be the extension of $\varphi_{i-1}$ where each vertex in $U_i$ is mapped to the unique vertex of $V_i$ that it is matched to by $M$. It follows that
    \[
        \|\mu_\vecf^{\text{pw}}(H \mid \varphi_i)-\mu_\vecf^{pw}(H \mid \varphi_{i-1})\|_1=\bigO(\Delta\cdot k^2),
    \]
    and so by induction, $\varphi_r$ is a valid complete embedding of $H$ such that ${\|\vecf(\varphi_r(H))\|_1=\bigO(\Delta \cdot r \cdot k^2)}$, which completes the proof.
\end{proof}

\cref{thm:arbitrary-embedding} states that we can find an embedding of any bounded-degree $r$-partite $H$ into a vector-labelled $r$-partite $G$ that is close to representative of the average partwise embedding. We will now establish a generalisation of \cref{lem:vec-partition}, which says that, under certain conditions on $H$, we can partition a vector-labelled $K_n$ into suitable vertex parts $V_i$ such that the average partwise embedding of $H$ is close to the expected weight $\frac{e(H)}{e(K_n)}h(K_n)$ of a random embedding of $H$ in $K_n$. The condition on $H$ is that the vertex degrees in $H$ are spread reasonably uniformly among the vertex parts. Formally, let $H$ be an $n$-vertex graph with average degree $d$. Let $U_1,\dots,U_r$ be a partition of $V(H)$ into $r$ independent sets, and for each $i \in [r]$, let $n_i = |U_i|$ and let $d_i=\frac{1}{n_i}\sum_{v\in U_i} d(v)$ denote the average degree of the vertices in $U_i$. We say $U_1,\dots,U_r$ is an \defn{$(r,C)$-partition} of $V(H)$ if $\sum_{i\in[r]}\frac{n_i}{n}(d_i-d)^2\leq C^2$. If $H$ admits an $(r,C)$-partition, we say it is \defn{$(r,C)$-uniform}. Observe that if $d_i\in[d-C,d+C]$ for each $i\in[r]$, then the partition $U_1,\dots,U_r$ is an $(r,C)$-partition of $V(H)$.

\begin{lemma}\label{lem:graph-uneven-partition}
    Let $H$ be an $n$-vertex, $(r,C)$-uniform graph with maximum degree $\Delta$ and $(r,C)$-partition $U_1,\dots,U_r$. Let ${\vecf\colon E(K_n)\to\R^k}$ be such that $\|\vecf(e)\|_1 \leq 1$ for all $e \in E(K_n)$, and let $\mu_\vecf(H)=\frac{e(H)}{e(K_n)}\vecf(K_n)$ denote the average value of $\vecf(H)$ over all embeddings of $H$ into $K_n$. 
    Then there is a partition $V(K_n)=\bigcup_{i=1}^r V_i$ such that $|V_i|=|U_i|$ and 
    \[
        \|\mu^{pw}_\vecf(H) - \mu_\vecf(H) \|_1 \leq \bigO(\sqrt{k(\Delta^2 r + C^2 n)}),
    \]
    where $\mu^{pw}_\vecf(H)$ denotes the average value of $\vecf(H)$ over all partwise embeddings of $H$ into $K_n$.
\end{lemma}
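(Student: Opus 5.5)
Following the proofs of \cref{lem:vec-partition} and \cref{lem:hyper-partition}, we shift $\vecf$ by its average and halve it. Then $\vecf(K_n)=0$, so $\mu_\vecf(H)=0$, and $\norm{\vecf(e)}_1\le 1$ still holds. The goal is a random partition $V_1,\dots,V_r$ with $|V_i|=n_i$ for which $X\coloneq\mu^{pw}_\vecf(H)$ has small second moment. We then finish with $\|X\|_1\le\sqrt{k}\|X\|_2$.

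\textbf{Step 1: write $X$ as a linear form.} For a fixed partition, a uniform partwise embedding sends an edge of $H$ between $U_i$ and $U_j$ to a uniform edge of $E(V_i,V_j)$. Writing $m_{ij}$ for the number of edges of $H$ between $U_i$ and $U_j$, we get
\[
X=\sum_{i<j}\frac{m_{ij}}{n_in_j}\,\vecf(E(V_i,V_j))=\sum_{uv\in E(K_n)} c_{\pi(u)\pi(v)}\vecf_{uv},
\]
where $c_{ij}=m_{ij}/(n_in_j)$, $c_{ii}=0$, and $\pi(v)$ is the part containing $v$.

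\textbf{Step 2: subtract a degree term.} The natural decomposition is
\[
c_{ij}=\frac{d_i}{n}+\frac{d_j}{n}-\frac{d}{n}+r_{ij}.
\]
Let $d_v=\sum_{e\ni v}\vecf_e$. Because $\vecf(K_n)=0$, the constant part contributes nothing. The degree part contributes
\[
\frac1n\sum_v (d_{\pi(v)}-d)\,d_v.
\]
This is a random sum, since $\pi$ is a uniformly random assignment with prescribed part sizes. We have $\sum_v d_v=2\vecf(K_n)=0$ and $\|d_v\|_2\le n$. A variance computation for sampling without replacement therefore gives expected squared $\ell_2$ norm at most
\[
\frac{1}{n^2}\cdot\frac{1}{n}\sum_i n_i(d_i-d)^2\cdot\sum_v\|d_v\|_2^2\lesssim \frac{C^2}{n^2}\cdot n\cdot n^2=C^2n.
\]
This is exactly where the $C^2 n$ term comes from.

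\textbf{Step 3: the residual term.} The remaining part is $Y=\sum_{uv}r_{\pi(u)\pi(v)}\vecf_{uv}$. Its coefficients have row sums over $j$, weighted by $n_j$, that vanish up to $O(1/n)$ corrections, since $\sum_j n_jc_{ij}=d_i$. We expand $\E\|Y\|_2^2$ by the overlap type of pairs of edges, as in \cref{lem:vec-partition}.
\begin{itemize}
\item Disjoint pairs: the weighted sum of $r_{ij}$ vanishes, so their covariance contributions cancel up to lower-order terms.
\item Incident pairs: these contribute roughly $\sum_v\|d_v\|^2$ times a conditional variance of $r$. This should again be absorbed into the degree term, which is why the degree term was subtracted.
\item Identical pairs: these contribute $\sum_{e}\E r^2_{\pi(e)}\|\vecf_e\|_2^2\le \sum_{i,j}\frac{n_in_j}{n^2}\,n^2 c_{ij}^2$.
\end{itemize}
For the diagonal bound, use $c_{ij}\le \min(\Delta/n_i,\Delta/n_j)$ together with $\sum_j n_jc_{ij}=d_i\le\Delta$. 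This gives
\[
\sum_{i,j}n_in_jc_{ij}^2\le \sum_i n_i\cdot\frac{\Delta}{n_i}\cdot\Delta=\Delta^2 r,
\]
which is the $\Delta^2 r$ term.

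\textbf{Main obstacle.} The hard part is Step 3 for disjoint and incident pairs: we must show that the covariances of the without-replacement partition are small enough that only $O(\Delta^2 r)$ survives. I would handle this by instead taking independent random assignments with $\Prob(\pi(v)=i)=n_i/n$, where the off-diagonal covariances vanish exactly, and then argue a correction. The cleaner alternative is to compute the exact hypergeometric joint probabilities $\Prob(\pi(u)=i,\pi(v)=j,\pi(x)=i',\pi(y)=j')$ and check that their deviation from the product form is $O(1/n)$ relative. Combined with $\sum\vecf=0$ and $|\sum_{e,e'}\vecf_e\vecf_{e'}|$-type cancellations, this would bound everything by $O(\Delta^2r+C^2n)$. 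The lemma then follows from the existence of a partition with $\|X\|_2^2\le\E\|X\|_2^2$.
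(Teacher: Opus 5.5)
You have a genuine gap: Step~3, which you flag as the main obstacle, is the entire content of the paper's proof, and you leave it unproved.

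\textbf{What is correct.} Step~2 is right and isolates the $C^2n$ term cleanly. With $a_v=d_{\pi(v)}-d$ you have $\sum_v a_v=0$ deterministically, $\E a_u^2=R:=\sum_i\frac{n_i}{n}(d_i-d)^2$, and $\E a_ua_v=-R/(n-1)$ for $u\neq v$. Together with $\sum_v d_v=0$ this gives $\E\norm{Z}_2^2=\frac{R}{n(n-1)}\sum_v\norm{d_v}_2^2\le Rn$. The diagonal bound $\sum_{i,j}n_in_jc_{ij}^2\le r\Delta^2$ is also right; it is the paper's $Q\le r\Delta^2$. You should justify replacing $r_{ij}^2$ by $c_{ij}^2$: the residual matrix $(r_{ij})$ is the orthogonal projection of $c$, in the inner product $\sum_{i,j}n_in_ja_{ij}b_{ij}$, away from matrices of the form $(a_i+a_j)_{i,j}$. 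Hence $\sum n_in_jr_{ij}^2\le\sum n_in_jc_{ij}^2$.

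\textbf{The gap.} The sums $\sum_{|e\cap e'|=0}\vecf_e\vecf_{e'}$ and $\sum_{|e\cap e'|=1}\vecf_e\vecf_{e'}$ can be of order $n^3$. So you need an actual bound of order $(\Delta^2r+C^2n)/n^3$ on the disjoint and incident correlations of $r$, not just ``lower order''. Neither suggested route delivers this as stated.
\begin{itemize}
\item The independent model does not give $|V_i|=|U_i|$, yet your coefficients $c_{ij}=m_{ij}/(n_in_j)$ presuppose those sizes. Conditioning on the sizes is just the hypergeometric model again. A crude rebalancing moves order $\sqrt n$ vertices, each shifting $X$ by up to about $2\Delta$. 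That costs order $\Delta\sqrt n$, far more than $\sqrt{\Delta^2r+C^2n}$ when $C$ is small, as in the applications.
\item The claim that hypergeometric joint probabilities are within relative $O(1/n)$ of the product form is false for small parts. If $n_i=1$, then $\Prob(\pi(u)=\pi(v)=i)=0$, not $1/n^2$. Singleton parts do occur (Case~1 of \cref{lem:forest_uniformity_fine}).
\item Your incident-pair bullet misdescribes the mechanism. In the product model that contribution of $Y$ equals $\E\bigl[(\E[r_{\pi(u)\pi(v)}\mid\pi(u)])^2\bigr]$. The conditional mean $\sum_j\frac{n_j}{n}r_{\pi(u)j}$ is identically $0$, so nothing is ``absorbed into the degree term''.
\end{itemize}

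\textbf{How to close it.} Use that $\sum_j n_jr_{ij}=0$ holds exactly, and expand the exact falling-factorial probabilities, e.g.\ $\Prob(\pi(u)=i,\pi(v)=j,\pi(w)=l)=n_i(n_j-\delta_{ij})(n_l-\delta_{il}-\delta_{jl})/(n)_3$. All leading and first-order terms vanish identically. What remains is $\E[r_{\pi(u)\pi(v)}r_{\pi(u)\pi(w)}]=\frac{1}{(n)_3}\bigl(2\sum_in_ir_{ii}^2-\sum_{i,j}n_in_jr_{ij}^2\bigr)$ and $\E[r_{\pi(u)\pi(v)}r_{\pi(x)\pi(y)}]=\frac{1}{(n)_4}\bigl(d^2+2\sum_{i,j}n_in_jr_{ij}^2-6\sum_in_ir_{ii}^2\bigr)$. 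Use $|r_{ii}|=|d-2d_i|/n\le 2\Delta/n$, the projection bound, and the fact that the three pair-sums have absolute value at most $n^3$. This gives $\E\norm{Y}_2^2=O(r\Delta^2)$, and the lemma follows from $\norm{X}_2^2\le 2\norm{Z}_2^2+2\norm{Y}_2^2$.

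\textbf{Comparison with the paper.} The paper does the same bookkeeping without decomposing $c$. It computes $K_0,K_1,K_2$ for $\rho_{ij}=c_{ij}$ directly, in terms of $d^2$, $M=\sum_i\frac{n_i}{n}d_i^2$ and $Q$. Inclusion--exclusion is cleaner there because $\rho_{ii}=0$, whereas your $r_{ii}\neq0$. It then writes $\E\norm{X}_2^2=(K_1-K_0)D_1+(K_0-2K_1+K_2)D_2$, using $D_0=\norm{\vecf(K_n)}_2^2=0$. The $D_1$ coefficient is about $(M+n(M-d^2))/n^3$, matching your Step~2. The $D_2$ coefficient is about $Q/n^2$, matching your diagonal term. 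Your Hoeffding-type decomposition is a reasonable reorganization of the same computation, but as written it omits the part that needs proving.
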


\begin{proof}
    As in \cref{lem:vec-partition}, by shifting and scaling $\vecf$ by a factor of $\frac{1}{2}$ we may assume
    that $\vecf(K_n)=0$ and $\|\vecf(e)\|_2\leq\|\vecf(e)\|_1\leq 1$ for all $e \in E(K_n)$. Under this assumption, we will find a partition that satisfies $\|\mu_\vecf^{pw}(H)\|_2^2\leq \bigO(\Delta^2r+C^2n)$. Since ${\|\mu_\vecf^{pw}(H)\|_1\leq\sqrt{k}\cdot\|\mu_\vecf^{pw}(H)\|_2}$, this suffices to complete the proof.

    Let $d$ be the average degree of $H$. For each $i \in [r]$, let $n_i=|U_i|$ and let $d_i$ denote the average degree of $U_i$. Finally, for each pair $i,j \in [r]$, not necessarily distinct, let $\rho_{ij}\coloneq\frac{e(U_i,U_j)}{n_i n_j}$ denote the edge-density of $H$ between $U_i$ and $U_j$. Note that $\rho_{ii} = 0$ for each $i \in [r]$. It will be useful for us to introduce several quantities that depend only on $H$ and its $(r,C)$-partition. Each of them in some sense measures the average value of squares of degrees in $H$, and in particular is bounded above by a multiple of $\Delta^2$. 
    Let 
    \[
        M=\sum_{i\in[r]}\frac{n_i}{n}d_i^2 = \sum_{i,j,k}\frac{n_i}{n}n_jn_k\rho_{ij}\rho_{ik} 
    \]
    be the weighted average of squares of $d_i$, where the second sum is over (not necessarily distinct) triples $i,j,k \in [r]$. Clearly $M\leq\Delta^2$, and by definition of $(r,C)$-uniformity we have that $R\coloneq M-d^2\leq C^2$. Additionally, let \[Q=\sum_{i,j}n_i n_j\rho_{ij}^2=\sum_{i,j}\frac{e(U_i,U_j)}{n_i}\frac{e(U_i,U_j)}{n_j},\] where both sums are over (not necessarily distinct) pairs $i,j \in [r]$. Then, letting $d_{ij}$ denote $\frac{e(U_i,U_j)}{n_i}$, we have that $\sum_{j}d_{ij}=d_i\leq\Delta$, and therefore 
    \[
        Q=\sum_{i,j}d_{ij}\cdot d_{ji}\leq\sum_{i,j}\Delta \cdot d_{ji}
        =\sum_i \Delta \cdot d_i \leq \sum_i \Delta^2 = r\Delta^2.
    \]
    
    Consider now a uniformly random partition $V(K_n)=\bigcup_{i=1}^r V_i$ such that $|V_i|=|U_i|$ for each $i \in [r]$. For each edge $e\in E(K_n)$ with endpoints in $V_i$ and $V_j$, let $X_e$ be equal to $\rho_{ij}$. Then we have
    \[
        X\coloneq\mu_\vecf^\text{pw}(H)=\sum_{e\in E(K_n)}\vecf(e)\cdot X_e.
    \]
    Observe that for $e',e''\in E(K_n)$, the expectation $\E X_{e'}X_{e''}$ depends only on the value of ${s\coloneq|e'\cap e''|\in\{0,1,2\}}$. We denote this expectation by $K_s$. Similarly to in the proof of \cref{lem:hyper-partition}, for each $s\in\{0,1,2\}$, we let $\sigma_s=\sum_{e,e'\colon|e\cap e'|=s}h(e)h(e')$ and ${D_s=\sum_{|S|=s}(\sum_{e\supseteq S}h(e))^2}$. It follows that
    $\sigma_0 = D_0 - D_1 + D_2$, $\sigma_1 = D_1 - 2D_2$, and $\sigma_2 = D_2$. Therefore,
    \begin{equation}\label{eq:EX2}
        \E \|X\|_2^2 = K_0\sigma_0+K_1\sigma_1+K_2\sigma_2=K_0D_0+(K_1-K_0)D_1+(K_0-2K_1+K_2)D_2.
    \end{equation}
    We now show that $K_0$, $K_1$ and $K_2$ differ by a factor of $n^{-2}(1+\bigO(n^{-1}))$ from $d^2$, $M$ and $Q$, respectively. The simplest case is $K_2$.
    \[
        K_2 = \E X_e^2 = \frac{1}{n(n-1)}\sum_{i,j\in[r]}n_i n_j\rho_{ij}^2=\frac{1}{n(n-1)}Q.
    \]
    To calculate $K_1$, we average $\rho_{ij}\rho_{ik}$ over ordered triplets $(v_1,v_2,v_3)$ of distinct vertices from (not necessarily distinct) vertex parts $U_i$, $U_j$ and $U_k$, respectively. In the following calculation, we sum over all ordered triplets (of not necessarily distinct vertices) and subtract the contribution of triplets where $v_1=v_3$. Since $\rho_{ii} = 0$ for all $i \in [r]$, this calculation gives precisely the sum over all ordered triplets of distinct vertices. Letting $(n)_k$ denote the falling factorial $n(n-1)\cdots(n-k+1)$, we have
    \[
        K_1= \frac{1}{(n)_3}\left(\sum_{i,j,k}n_i n_j n_k \rho_{ij}\rho_{ik}-\sum_{i,j}n_i n_j \rho_{ij}^2\right) = \frac{1}{(n)_3}(nM-Q).
    \]
    By similar calculations, averaging over quadruplets of distinct vertices that define two disjoint edges, we have
    \[
        K_0=\frac{1}{(n)_4}\left(n^2d^2-4nM + 2Q\right).
    \]
    Now, since $M\geq d^2$ and $Q\geq 0$, it follows that
    \begin{align*}
        K_0-2K_1+K_2 &\leq \frac{1}{(n)_4}(n^2d^2 - 2n(n-1)M + (n-1)(n-2)Q) \\
        &\leq \frac{1}{(n)_4}((n-1)(n-2)Q),
        \intertext{and}
        K_1-K_0      &\leq \frac{1}{(n)_4}\left(nM + n^2(M-d^2)\right).
    \end{align*}
    Finally, we substitute these values into \eqref{eq:EX2} with $D_0=0$, $D_1\leq n(n-1)^2$ and $D_2\leq\binom{n}{2}$. Recalling that $Q \leq \Delta^2r$, $M \leq \Delta^2$ and $M - d^2 \leq C^2$, then for $n \geq 4$ we have
    \[
        \E\|X\|_2^2\leq \frac{1}{(n)_4}\left(n(n-1)^2(nM+n^2C^2)+\frac{n(n-1)^2(n-2)}{2}r\Delta^2\right)\leq 6(\Delta^2 r + C^2 n),
    \]
    and so a partition with the desired property exists. \qedhere
\end{proof}

Combining the random partitioning approach with the embedding strategy now allows us to prove the following general result for $(r,C)$-uniform graphs
\begin{theorem}\label{thm:general_uniformity_error}
    Let $H$ be a $(r,C)$-uniform $n$-vertex graph with maximum degree $\Delta$, and let $\vecf : E(K_n) \to \mathbb{R}^k$ be such that $\|\vecf(e)\|_1 \leq 1$ for all $e \in E(K_n)$. Then there is a copy $H'$ of $H$ in $K_n$ satisfying
    \[f_\vecf(H') = \bigO(C\sqrt{kn} + \Delta r k^2).\]
\end{theorem}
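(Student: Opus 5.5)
The plan is to combine \cref{lem:graph-uneven-partition} with \cref{thm:arbitrary-embedding} through the triangle inequality. Fix an $(r,C)$-partition $U_1,\dots,U_r$ of $V(H)$; each $U_i$ is an independent set, so $H$ is an $r$-partite graph with these parts. First apply \cref{lem:graph-uneven-partition} to obtain a partition $V(K_n)=\bigcup_{i=1}^r V_i$ with $|V_i|=|U_i|$ and
\[
    \|\mu^{pw}_\vecf(H)-\mu_\vecf(H)\|_1=\bigO\bigl(\sqrt{k(\Delta^2 r+C^2 n)}\bigr).
\]
Next, let $G$ be the complete $r$-partite subgraph of $K_n$ with parts $V_1,\dots,V_r$, and restrict $\vecf$ to $E(G)$. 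Every partwise embedding of $H$ maps edges of $H$ between different parts, since each $U_i$ is independent. Hence these embeddings are exactly the partwise embeddings into $G$, and the quantity $\mu^{pw}_\vecf(H)$ is the same whether it is computed in $K_n$ or in $G$. Applying \cref{thm:arbitrary-embedding} to $G$, $H$ and this restriction gives a partwise embedding $H'$ with $\|\vecf(H')-\mu^{pw}_\vecf(H)\|_1=\bigO(\Delta r k^2)$. Since $H'$ is a copy of $H$ in $K_n$, we have $e(H')=e(H)$, so $f_\vecf(H')=\|\vecf(H')-\mu_\vecf(H)\|_1$. The triangle inequality then yields
\[
    f_\vecf(H')=\bigO\bigl(\Delta r k^2+\sqrt{k}\,\Delta\sqrt{r}+C\sqrt{kn}\bigr),
\]
using $\sqrt{a+b}\le\sqrt a+\sqrt b$.

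It remains to absorb the middle term $\Delta\sqrt{kr}$. Since $k\ge1$ and $r\ge1$, we have $\sqrt{kr}\le rk^2$, so $\Delta\sqrt{kr}\le \Delta r k^2$. This gives the claimed bound $\bigO(C\sqrt{kn}+\Delta r k^2)$.

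The only point requiring care is the identification of partwise embeddings of $H$ into $K_n$ with partwise embeddings into the complete $r$-partite graph $G$. This relies on the parts $U_i$ being independent, which is built into the definition of an $(r,C)$-partition. With that in hand, the average $\mu^{pw}_\vecf(H)$ appearing in the lemma and the one appearing in the theorem coincide, and everything else is bookkeeping. The step most likely to need a second look is the absorption of the cross term, but it is elementary.
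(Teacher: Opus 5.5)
Your proposal is correct and follows essentially the same route as the paper: obtain the partition from \cref{lem:graph-uneven-partition}, apply \cref{thm:arbitrary-embedding} to get a partwise embedding, and combine the two bounds with the triangle inequality. You also spell out two points the paper leaves implicit. The first is why partwise embeddings into $K_n$ are the same as partwise embeddings into the complete $r$-partite graph. The second is the absorption of the $\Delta\sqrt{rk}$ term into $\Delta r k^2$.
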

\begin{proof}
    Let $U_1,\dots U_{r}$ be an $(r,C)$-partition of $V(H)$. Applying \cref{lem:graph-uneven-partition}, we obtain a partition $V_1,\dots,V_r$ of $V(K_n)$ such that $|V_i| = |U_i|$ for each $i \in [r]$, and 
    \begin{align*}
        \|\mu_\vecf^{pw}(H) - \mu_\vecf(H) \|_1 &\leq \bigO(\sqrt{k(\Delta^2 r + C^2 n)}) = \bigO(\Delta\sqrt{rk} + C\sqrt{kn}),
    \end{align*}
     where $\mu_\vecf^{pw}(H)$ denotes the average value of $\vecf(H)$ over all partwise embeddings of $H$ in $K_n$, and $\mu_\vecf(H) = \frac{e(H)}{e(K_n)}\vecf(K_n)$ is the average value of $h(H)$ over all embeddings of $H$ in $K_n$. By \cref{thm:arbitrary-embedding}, there is a partwise embedding $H'$ of $H$ in $K_n$ such that
    \begin{align*}
        \|h(H') - \mu_\vecf^{pw}(H) \|_1 = \bigO(\Delta r k^2).
    \end{align*}
    It follows that this copy $H'$ satisfies
    \begin{align*}
        f_\vecf(H') = \|h(H') - \mu_\vecf(H) \|_1 = \bigO(C\sqrt{kn} + \Delta r k^2),
    \end{align*}
    as required.
\end{proof}

To achieve an error-bound for an almost representative embedding of a subgraph of $K_n$, it now suffices to establish that the subgraph is sufficiently uniform. We begin with the special case of $F$-factors. 
 \begin{lemma}\label{lem:factor_uniformity}
     Let $F$ be a graph on $r$ vertices with maximum degree $\Delta$. The $n$-vertex $F$-factor $H$ is $(r,\frac{r^3}{n})$-uniform.
 \end{lemma}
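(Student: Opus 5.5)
The plan is to construct the partition explicitly from the factor structure, using cyclic shifts so that almost all of the degree imbalance cancels. Write $V(F)=\{x_1,\dots,x_r\}$ and $m=n/r$. Then $H$ is the disjoint union of copies $F_1,\dots,F_m$ of $F$, and $x_s^{(j)}$ denotes the copy of $x_s$ in $F_j$. For $i\in[r]$ I will let $U_i$ be the set of vertices $x_s^{(j)}$ with $s+j\equiv i \pmod r$.

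\textbf{Independence and part sizes.} Fix a copy $F_j$. The map $s\mapsto s+j \bmod r$ is a bijection of $[r]$. So the $r$ vertices of $F_j$ lie in $r$ distinct parts, one in each. Every edge of $H$ lies inside a single copy, so each $U_i$ is an independent set. It also follows that $n_i=|U_i|=m$ for every $i$.

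\textbf{Degree averages.} Let $S_i=\sum_{v\in U_i}d(v)$, so that $d_i=S_i/m$. Let $\deg_F(x_s)$ be the degree of $x_s$ in $F$, and note that $d=2e(F)/r$.
\begin{itemize}
\item Write $m=qr+t$ with $0\le t<r$, and group the copies into $q$ blocks of $r$ consecutive indices, leaving a remainder block of $t$ copies.
\item Within a full block $\{F_{j+1},\dots,F_{j+r}\}$, part $U_i$ receives exactly one copy of each $x_s$, since $s\equiv i-j' \pmod r$ runs over all residues as $j'$ does. So each full block contributes exactly $\sum_s \deg_F(x_s)=2e(F)$ to every $S_i$.
\item The remainder block contributes to $S_i$ some value between $0$ and $t\Delta$.
\end{itemize}
Hence $S_i=2e(F)q+\varepsilon_i$ with $0\le\varepsilon_i\le t\Delta$, and the average of the $S_i$ equals $md$. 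Therefore $|S_i-md|\le t\Delta< r\cdot r=r^2$, using $\Delta\le r-1$.

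\textbf{Conclusion.} The previous step gives $|d_i-d|\le r^2/m=r^3/n$ for every $i$. By the observation following the definition of an $(r,C)$-partition, $U_1,\dots,U_r$ is then an $(r,r^3/n)$-partition. Equivalently, $\sum_i\frac{n_i}{n}(d_i-d)^2\le (r^3/n)^2$.

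The only step needing care is the bookkeeping for the remainder block when $r\nmid m$. I expect the crude bound $t\Delta<r^2$ to be enough, since the stated constant $r^3/n$ is generous. If $r\mid m$, then $d_i=d$ exactly and the bound holds trivially.
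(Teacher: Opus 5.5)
Your proof is correct and follows the paper's argument. Both assign the $r$ cyclic shifts of a fixed ordering of $V(F)$ to the copies of $F$ so that each shift is used $\lfloor m/r\rfloor$ or $\lceil m/r\rceil$ times, which gives independent parts of size exactly $n/r$ whose degree sums lie within $r^2$ of $\frac{n}{r}d$; your observation that $\frac{n}{r}d$ is the mean of the $S_i$, all of which lie in an interval of length $t\Delta\le (r-1)^2$, is a slightly cleaner version of the paper's remainder bookkeeping.
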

 \begin{proof}
     Let $d$ denote the average degree of $F$ (and hence of $H$). We show that it is possible to partition $V(H)$ into $r$ independent sets $U_1,\dots,U_r$ each of size $\frac{n}{r}$ such that for each $U_i$, the sum of the degrees of vertices in $U_i$ is in $[\frac{n}{r}d-r^2, \frac{n}{r}d+r^2]$, from which the lemma statement follows.

     Fix a vertex ordering on $V(F)$ and consider the $r$ distinct cyclic permutations of this vertex ordering. Each permutation corresponds to a unique bijection between $V(F)$ and $\{U_1,\dots,U_r\}$. If $\frac{n}{r}$ = $jr$ for some positive integer $j$, then we may partition $V(H)$ so that each of these permutations appears as a copy of $F$ in $H$ precisely $j$ times. Since each vertex of $F$ is mapped into each vertex part of $H$ exactly $j$ times, it follows that the sum of vertex degrees in each $U_i$ is precisely $\frac{n}{r}d$. Hence, in general we consider a partition of $V(H)$ into parts such that each permutation appears either $j$ or $j+1$ times for some $j$. The maximum degree of $F$ is $r-1 < r$ and the minimum degree is $0$, so it follows that the degree sum of each $U_i$ is at least $(\frac{n}{r}-r)d$ and at most $(\frac{n}{r}-r)d + r^2$. Since $d < r$, this completes the proof.
 \end{proof}

Applying \cref{thm:general_uniformity_error} with the uniformity result from \cref{lem:factor_uniformity} proves the following.
 \begin{theorem}
    Let $F$ be an $r$-vertex graph with maximum degree $\Delta$, and let $H$ be an $n$-vertex $F$-factor. Let $\vecf : E(K_n) \to \mathbb{R}^k$ such that $\|\vecf(e)\|_1 \leq 1$ for all $e \in E(K_n)$. Then there is a copy $H'$ of $H$ in $K_n$ satisfying 
    \[f_\vecf(H') = \bigO(\Delta rk^2).\]
\end{theorem}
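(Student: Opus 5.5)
The plan is to apply \cref{thm:general_uniformity_error} directly, using the partition supplied by \cref{lem:factor_uniformity}. By \cref{lem:factor_uniformity}, the $F$-factor $H$ admits an $(r,C)$-partition $U_1,\dots,U_r$ with $C = r^3/n$. The parts are independent sets: each copy of $F$ places its $r$ vertices in $r$ distinct parts via a cyclic shift, and $H$ has no edges between different copies. \cref{thm:general_uniformity_error} then yields a copy $H'$ of $H$ in $K_n$ with
\[
    f_\vecf(H') = \bigO\!\left(C\sqrt{kn} + \Delta r k^2\right) = \bigO\!\left(\frac{r^3\sqrt{k}}{\sqrt{n}} + \Delta r k^2\right).
\]

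It remains to absorb the first term into $\bigO(\Delta r k^2)$. I split into two cases according to the size of $n$.

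\emph{Large $n$.} If $n \geq r^4$, then $r^3/\sqrt{n} \leq r$. Hence $r^3\sqrt{k}/\sqrt{n} \leq r\sqrt{k} \leq \Delta r k^2$ whenever $\Delta \geq 1$. If $\Delta = 0$, then $H$ has no edges, and $f_\vecf(H') = 0$ trivially.

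\emph{Small $n$.} If $n < r^4$, this crude bound is not good enough, and I would go back to the proof of \cref{lem:graph-uneven-partition} and use a sharper estimate. The quantity actually bounded there is $\sqrt{k(\Delta^2 r + C^2 n)}$, and with $C = r^3/n$ we get $C^2 n = r^6/n$. This is not automatically $\bigO(\Delta^2 r)$. However, the degree-sum deviation in \cref{lem:factor_uniformity} is really at most $r\Delta$ per part, not $r^2$, so one may take $C = r^2\Delta/n$ (since $|d_i - d| \leq r\Delta/(n/r)$). This gives $C^2 n = r^4\Delta^2/n$, which is at most $\Delta^2 r$ as soon as $n \geq r^3$. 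The remaining range $n < r^3$ still needs handling. Alternatively, one can note that $f_\vecf(H') \leq 2e(H) \leq \Delta n$ always holds, which is $\bigO(\Delta r k^2)$ when $n = \bigO(rk^2)$. Interpolating between these two routes covers all $n$.

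\textbf{Main obstacle.} The expected obstacle is exactly this bookkeeping for small $n$ relative to $r$. My first attempt would be to sharpen the uniformity constant as above to $C = \bigO(r\Delta\sqrt{r}/\sqrt{n})$, using the fact that each cyclic-shift class contributes degree imbalance bounded by $\Delta$ per vertex. With that, $C^2 n = \bigO(\Delta^2 r)$ uniformly in $n$, and \cref{lem:graph-uneven-partition} gives a partition error of $\bigO(\Delta\sqrt{rk}) = \bigO(\Delta r k^2)$. Adding the embedding error $\bigO(\Delta r k^2)$ from \cref{thm:arbitrary-embedding} then completes the proof.
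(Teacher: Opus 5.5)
Your first paragraph is the paper's entire proof: the paper applies \cref{thm:general_uniformity_error} with the $(r,r^3/n)$-partition from \cref{lem:factor_uniformity}, and it does not spell out how $C\sqrt{kn}=r^3\sqrt{k/n}$ is absorbed into $\bigO(\Delta rk^2)$. You are right that this absorption is not automatic when $n$ is small compared with $r^4$. However, your patch for that range does not close the gap.

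\textbf{The interpolation claim is false.} Measure your sharpened constant $C=r^2\Delta/n$ against the actual target. It gives $C\sqrt{kn}=r^2\Delta\sqrt{k/n}$, which is $\bigO(\Delta rk^2)$ only when $n\geq c\,r^2/k^3$. The trivial bound $f_\vecf(H')\leq 2e(H)\leq\Delta n$ only covers $n\leq c\,rk^2$. For fixed $k$ and large $r$ neither route applies. For example, with $k=1$ and $n=r^{3/2}$, the two routes give errors of order $\Delta r^{5/4}$ and $\Delta r^{3/2}$ respectively.

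\textbf{The closing paragraph has an arithmetic slip.} Taking $C=\bigO(r^{3/2}\Delta n^{-1/2})$ gives $C^2n=\bigO(\Delta^2r^3)$, not $\bigO(\Delta^2r)$. The resulting partition error is then $\bigO(\Delta r^{3/2}\sqrt{k})$, which is again not $\bigO(\Delta rk^2)$ when $r\gg k^3$.

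\textbf{The missing observation} is simply that $C\leq\Delta$ always. Every $d_i$ and $d$ lies in $[0,\Delta]$, so $\sum_i\frac{n_i}{n}(d_i-d)^2\leq\Delta^2$. This handles the two ranges as follows.
\begin{itemize}
\item For $n\leq r^2$: $C\sqrt{kn}\leq\Delta\sqrt{kn}\leq\Delta r\sqrt{k}$.
\item For $n\geq r^2$: use your bound $C\leq r^2\Delta/n$, which gives $C\sqrt{kn}\leq r^2\Delta\sqrt{k/n}\leq\Delta r\sqrt{k}$. This bound is valid because the complete rounds of cyclic shifts contribute exactly $jrd$ to each part's degree sum, and the $s<r$ leftover copies shift that sum by at most $s\Delta$.
\end{itemize}
Equivalently, since $s\leq\min(r,n/r)$, you get $|d_i-d|\leq\Delta\min(1,r^2/n)$ directly. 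In both cases \cref{thm:general_uniformity_error} yields $f_\vecf(H')=\bigO(\Delta r\sqrt{k}+\Delta rk^2)=\bigO(\Delta rk^2)$, uniformly in $n$.
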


We now turn to the more general problem of arbitrary bounded-degree graphs. We will show that a bounded-degree graph $H$ admits a suitable partition by considering properties of a uniform random vertex colouring of $H$. The following concentration result of Chatterjee~\cite{chatterjee2005} will be useful.

\begin{lemma}[Proposition 4.5, \cite{chatterjee2005}] \label{lem:prop4.5}
    Let $H$ be an $n$-vertex graph with maximum degree $\Delta$ and let $X$ be a uniform random proper $q$-colouring of $H$. Let $g : [q]^{V(H)} \to \mathbb{R}$ satisfy ${|g(\sigma) - g(\sigma')| \leq \sum_{v \in V(H)} c_v \mathbb{1}_{\sigma(v) \neq \sigma'(v)}}$ for any two $q$-colourings $\sigma$ and $\sigma'$ of $H$. If $q > 2\Delta$, then for all $t \geq 0$, we have
    \begin{align*}
        \Prob[|g(X) - \E g(X)| \geq t] \leq 2\exp\left(-\frac{\gamma t^2}{\sum_{i=1}^n c_v^2}\right),
    \end{align*}
    where $\gamma = (q-2\Delta)/(q-\Delta)$.
\end{lemma}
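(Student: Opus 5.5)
The statement is Proposition 4.5 of Chatterjee~\cite{chatterjee2005}, and I would reprove it with his method of exchangeable pairs. Heat-bath Glauber dynamics for the uniform measure $\pi$ on proper $q$-colourings of $H$ provides the exchangeable pair. Given a colouring $x$, pick a uniform vertex $v$ and recolour it with a colour chosen uniformly from $[q]\setminus x(N(v))$. This chain is reversible with respect to $\pi$. Hence if $X\sim\pi$ and $X'$ is one step from $X$, the pair $(X,X')$ is exchangeable.

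The abstract tool I would use is the following, as in Chatterjee's thesis. Suppose $F$ is antisymmetric, and set $f(x)=\E[F(X,X')\mid X=x]$ and
\[
v(x)=\tfrac12\E\bigl[\abs{(f(X)-f(X'))F(X,X')}\bigm| X=x\bigr].
\]
If $v\le B$ almost surely, then $\Prob(\abs{f(X)}\ge t)\le 2e^{-t^2/(2B)}$. This follows from the bound $\tfrac{d}{d\theta}\E e^{\theta f}\le \theta B\,\E e^{\theta f}$, which one derives from the identity $\E[f(X)\phi(X)]=\tfrac12\E[(\phi(X)-\phi(X'))F(X,X')]$ together with the inequality $\abs{e^a-e^b}\le\tfrac12\abs{a-b}(e^a+e^b)$.

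\textbf{Choice of $F$.} I would take the standard one, $F(x,x')=\sum_{k\ge0}\bigl(P^kg(x)-P^kg(x')\bigr)$, where $P$ is the Glauber kernel. Provided the series converges, it telescopes to give $f=g-\E g$, by the identity $\sum_k (P^k g - P^{k+1}g)=g-\E g$.

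\textbf{Contraction.} To control $F$ I would use a path-coupling estimate for colourings differing at a single vertex $u$. Run the two chains with the same choice of vertex, and couple the colour choices optimally. The disagreement is repaired with probability at least $\frac1n\cdot\frac{q-\Delta}{q-\Delta}$-type terms when $u$ itself is chosen. It spreads to a neighbour $w$ with probability at most $\frac1n\cdot\frac{1}{q-\Delta}$ per neighbour. Here I want a weighted contraction: the expected weighted disagreement $\sum_w c_w\mathbb 1[\text{disagree at }w]$ should shrink by a factor $1-\frac{\gamma}{n}$ each step, with $\gamma=(q-2\Delta)/(q-\Delta)$.

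I expect this step to be the main obstacle. A naive bound gives the contraction for unweighted Hamming distance, but the Lipschitz weights $c_v$ are arbitrary, so the disagreement can move to high-weight vertices. I would handle this as Chatterjee does. Write $\abs{P^kg(x)-P^kg(x')}\le\sum_w c_w\,a_k(u,w)$, where $a_k(u,w)$ is the probability that the coupled chains disagree at $w$ after $k$ steps. Then bound the matrix $A=\sum_k a_k$ in the Schur-test sense: both its row sums and its column sums are at most $n/\gamma$. The column sums use the same per-step contraction, applied to the transposed (symmetric) neighbour structure.

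\textbf{Bounding $v$.} For $x'$ obtained from $x$ by recolouring $v$, this gives $\abs{F(x,x')}\le\sum_w c_w A(v,w)$. It also gives $\abs{f(x)-f(x')}\le c_v$. Averaging over the uniform choice of $v$ (a factor $\frac1n$) yields
\[
v(x)\le \frac{1}{2n}\sum_{v,w}c_vc_wA(v,w)\le\frac{1}{2n}\cdot\frac{n}{\gamma}\sum_v c_v^2,
\]
where the last step uses Cauchy--Schwarz with the row and column sum bounds on $A$. Substituting $B=\sum_v c_v^2/(2\gamma)$ into the concentration tool gives exactly $2\exp(-\gamma t^2/\sum_v c_v^2)$, which is the claimed bound.
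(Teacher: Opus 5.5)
The paper gives no proof of this lemma; it imports it from Chatterjee, and your proposal is a faithful reconstruction of his argument. The ingredients match: the heat-bath Gibbs sampler as the exchangeable pair, $F(x,x')=\sum_{k\ge0}(P^kg(x)-P^kg(x'))$, a Dobrushin-type coupling with interdependence at most $\frac{1}{q-\Delta}$ between neighbours, and the final bound $\sum_{v,w}c_vc_wA(v,w)\le\frac{n}{\gamma}\sum_v c_v^2$. One point needs stating more carefully: $a_k(u,w)$ depends on the starting pair and must be bounded uniformly over pairs, via the coordinatewise inequality $\E d_{k+1}\le L\,\E d_k$ for the disagreement vector $d_k$, where $L=(1-\frac1n)I+\frac{1}{n(q-\Delta)}\mathrm{Adj}(H)$. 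This gives the entrywise bound $A\le\sum_k L^k=n\bigl(I-\frac{1}{q-\Delta}\mathrm{Adj}(H)\bigr)^{-1}$, and it is the symmetry of this majorant, not the scalar Hamming contraction, that yields your column-sum bound.
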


We now establish our uniformity bound for arbitrary bounded degree subgraphs.
\begin{lemma}\label{lem:bounded_deg_uniformity}
    Let $H$ be an $n$-vertex graph with maximum degree $\Delta$. Then $H$ is $\left(3\Delta, \bigO\left(\Delta^2n^{-\frac{1}{2}}\sqrt{\log2\Delta}\right)\right)$-uniform.
\end{lemma}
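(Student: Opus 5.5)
Take $q = 3\Delta$ and let $X$ be a uniformly random proper $q$-colouring of $H$; such colourings exist since $q > \Delta$. Set $U_i = X^{-1}(i)$ for $i \in [q]$. Each $U_i$ is an independent set, so $U_1,\dots,U_q$ partitions $V(H)$ into $r = 3\Delta$ independent sets (we allow empty parts). Write $n_i = |U_i|$ and $s_i = \sum_{v\in U_i} d(v)$. The uniformity quantity then becomes
\[
\sum_{i}\frac{n_i}{n}(d_i-d)^2=\frac1n\sum_i \frac{(s_i-dn_i)^2}{n_i}.
\]
It therefore suffices to show that, with positive probability, every $n_i$ is at least of order $n/\Delta$ and every $|s_i - d n_i|$ is $O(\sqrt{n\log 2\Delta})$. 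Indeed, the sum is then at most
\[
\frac1n\cdot 3\Delta\cdot \frac{O(n\log 2\Delta)}{n/\Delta}=O\Bigl(\frac{\Delta^2\log 2\Delta}{n}\Bigr),
\]
which is at most $C^2$ for $C = O(\Delta^2 n^{-1/2}\sqrt{\log 2\Delta})$. This is even slightly better than the claimed bound, which leaves slack if one of the estimates is lossy.

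For the concentration we apply \cref{lem:prop4.5} with $\gamma = (3\Delta-2\Delta)/(3\Delta-\Delta) = 1/2$, separately to $g_i(\sigma) = |\sigma^{-1}(i)|$ and to $g'_i(\sigma) = \sum_{v:\sigma(v)=i} (d(v)-d)$. Recolouring a single vertex changes $g_i$ by at most $1$ and $g'_i$ by at most $\Delta$, so we may take $c_v = 1$ and $c_v = \Delta$ respectively. This gives tail bounds $2\exp(-t^2/(2n))$ and $2\exp(-t^2/(2n\Delta^2))$. Choosing $t = O(\sqrt{n\log 2\Delta})$ for $g_i$ and $t = O(\Delta\sqrt{n\log 2\Delta})$ for $g'_i$, a union bound over the $2q = 6\Delta$ functions gives simultaneous concentration with positive probability. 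Note that $g'_i$ carries an extra factor $\Delta$ compared with the rough target above. Redoing the arithmetic with $|s_i - dn_i| = O(\Delta\sqrt{n\log 2\Delta})$ gives a sum of $O(\Delta^4\log(2\Delta)/n)$, which is exactly $C^2$ with $C = O(\Delta^2 n^{-1/2}\sqrt{\log 2\Delta})$, matching the statement. So the crude Lipschitz constant $\Delta$ is fine.

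The main obstacle is controlling the expectations. We need $\E g_i$ to be of order $n/\Delta$, and $\E g'_i$ to be small, at most $O(\Delta\sqrt{n\log 2\Delta})$. Since $|g'_i - (s_i - d n_i)|$ involves only the centred degrees, $\E g'_i = 0$ would suffice. The natural route is symmetry: the uniform distribution on proper colourings is invariant under permutations of the colour set $[q]$, so $\E g_i$ and $\E g'_i$ do not depend on $i$. Summing over $i$ gives $\E g_i = n/q = n/(3\Delta)$ and $\E g'_i = \frac1q\sum_v (d(v)-d) = 0$. Combining this with the concentration bounds, every $n_i \ge n/(3\Delta) - O(\sqrt{n\log 2\Delta}) \ge n/(6\Delta)$ once $n \ge c\,\Delta^2\log 2\Delta$. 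For smaller $n$ the bound $C \ge \Delta^2 n^{-1/2}\sqrt{\log 2\Delta}$ is $\Omega(\Delta)$. In that range any partition into independent sets, for instance the colour classes of a greedy colouring with empty classes added up to $3\Delta$ parts, already satisfies $\sum_i \frac{n_i}{n}(d_i-d)^2 \le \Delta^2 \le C^2$ after adjusting the constant. The only point to double-check is that \cref{lem:prop4.5} indeed applies with the restriction $q > 2\Delta$, which $q = 3\Delta$ satisfies.
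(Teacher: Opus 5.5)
Your proposal is correct and takes the same route as the paper: a uniform random proper $3\Delta$-colouring, \cref{lem:prop4.5} with $\gamma=1/2$ applied to the class sizes and degree sums, a union bound over the $6\Delta$ events, and the conclusion that each class has size $\Theta(n/\Delta)$ and degree-sum deviation $O(\Delta\sqrt{n\log 2\Delta})$. The differences are minor refinements. You concentrate the centred sum $s_i-dn_i$ directly, you justify the expectations by colour-permutation symmetry, and you explicitly handle the regime $n = O(\Delta^2\log 2\Delta)$, which the paper leaves implicit when it bounds $n_i$ below by $n/(6\Delta)$.
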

\begin{proof}
    Let $H$ have average degree $d$. We first consider a proper vertex-colouring $\sigma$ of $H$ with $3\Delta$ colours. For any given $\sigma$, we let $U_i$ denote the set of vertices of $H$ coloured $i$. Define $n_i(\sigma) = |U_i|$ to be the number of vertices of $H$ coloured $i$ by $\sigma$, and $m_i(\sigma) = \sum_{v \in U_i}d(v)$ to be the sum of the degrees of colour $i$ vertices.
    
    Let $X$ be a uniform random proper $3\Delta$-colouring of $H$. Then we have that $\E[n_i(X)] = \frac{n}{3\Delta}$ and $\E[m_{i}(X)] = d\frac{n}{3\Delta}$. Furthermore, for each $i \in [3\Delta]$, and any distinct $(3\Delta)$-colourings $\sigma$ and $\sigma'$, we have that 
    \begin{align*}
        |n_i(\sigma) - n_i(\sigma')| &\leq \sum_{v \in V(H)}\mathbb{1}_{\sigma(v) \neq \sigma'(v)}, \\
        \intertext{and}
        |m_i(\sigma) - m_i(\sigma')| &\leq \sum_{v \in V(H)}d(v)\mathbb{1}_{\sigma(v) \neq \sigma'(v)}.
    \end{align*}
    Applying \cref{lem:prop4.5}, we find that $\gamma = 1/2$, and for each $i \in [3\Delta]$, we have
 \begin{align}
     \Prob \left( \left|n_i(X) - \frac{n}{3\Delta}\right| \geq t \right) &\leq 2\exp\left(-\frac{t^2}{2n}\right), \label{eq:vtx_concentration}
    \intertext{and}
     \Prob\left(\left|m_i(X) - d\frac{n}{3\Delta}\right| \geq t \right) &\leq 2\exp\left(-\frac{t^2}{2\sum_{v}d(v)^2}\right), \label{eq:degree_concentration}
 \end{align}
where $\sum_v d(v)^2 \leq n\Delta^2$. We have $3\Delta$ functions $n_i$ and $3\Delta$ functions $m_i$, and so we choose values for $t$ such that the probabilities in \eqref{eq:vtx_concentration} and \eqref{eq:degree_concentration} are both bounded above by $\frac{1}{7\Delta}$. Union-bounding over all $6\Delta$ events, we find that there exists a proper $3\Delta$ colouring $\sigma$ of $H$ such that ${\left|n_i(\sigma) - \frac{n}{3\Delta}\right| \leq \sqrt{2n\log(14\Delta)}}$ and $\left|m_i(\sigma) - d\frac{n}{3\Delta}\right| \leq \Delta\sqrt{2n\log(14\Delta)}$ for all $i \in [3\Delta]$. Let $U_1,\dots,U_{3\Delta}$ be the vertex partition of $H$ given by $\sigma$. Then the average degree of each $U_i$ is given by $\frac{m_i(\sigma)}{n_i(\sigma)}$, which satisfies
\[
    \frac{m_i(\sigma)}{n_i(\sigma)} - d 
    = \frac{(d+\Delta)\sqrt{2n\log(14\Delta)}}{\frac{n}{3\Delta}-\sqrt{2n\log(14\Delta)}}\leq \frac{2\Delta\sqrt{2n\log(14\Delta)}}{\frac{n}{6\Delta}}=\bigO(\Delta^2n^{-\frac{1}{2}}\sqrt{\log2\Delta}).
\]
The lower bound follows similarly, and so $H$ is $\left(3\Delta, \bigO\left(\Delta^2n^{-\frac{1}{2}}\sqrt{\log2\Delta}\right)\right)$-uniform, as desired. 
 \end{proof}

Applying \cref{thm:general_uniformity_error} with the uniformity result obtained in \cref{lem:bounded_deg_uniformity} completes the proof of our main result.
\coolbounds*

\section{Spanning forests}\label{sec:spanning_forests}

Embedding almost representative spanning forests in labelled complete graphs is a variant of our problem of particular interest, since such subgraphs have received considerable attention from the perspective of zero-sum Ramsey theory (see, for example, \cite{caro1996}, \cite{caro2022zero}, \cite{furedi1992zero}, \cite{mohr2022zero}). In particular, when $K_n$ has a zero-sum labelling by $\{-1,+1\}$ (equivalent to our $k=2$ colour case), the bound for embedding a fixed spanning forest $F$ is known to be $f_c(F) \leq \frac{\Delta}{2} + \bigO(1)$ \cite{hollom2024}. In this section, we prove that the error is linear in $\Delta$ for all values of $k$. We further show that, up to divisibility constraints, every colour-balanced complete graph admits some colour-balanced spanning tree, generalising a result of Caro, Hansberg, Lauri and Zarb \cite{caro2022zero} for the $2$-colour case.

\subsection{Embedding fixed spanning forests}

We begin with the proof of \cref{thm:forest_error}. The strategy is once again to apply \cref{thm:general_uniformity_error}, but with a stronger uniformity result that we will now obtain for spanning forests. Given a forest $F$ on $n$ vertices and a vertex $v \in V(F)$, we say that $v$ is a \defn{centroid} of $F$ if no component of $F \backslash v$ contains more than $\frac{n}{2}$ vertices. We say a vertex-colouring of a graph $G$ is \defn{$i$-dominant} if there are more than $\frac{|V(G)|}{2}$ vertices of $G$ coloured $i$.

\begin{lemma}\label{lem:forest_recurse}
    Let $F$ be a forest on $n$ vertices with maximum degree $\Delta$. For any integer $R \geq 1$ there is a set $X$ of at most $R$ vertices in $F$ such that $F \backslash X$ has a proper $2$-colouring in which each colour class has size in $[\frac{n-|X|}{2} - \frac{n}{2^R}, \frac{n-|X|}{2} + \frac{n}{2^R}]$. 
\end{lemma}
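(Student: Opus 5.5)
The plan is to combine two ingredients: removing centroids, which shrinks components, and choosing a colour swap in each component, which balances the imbalances of different components against each other.

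Each tree component $T$ of a forest has a unique proper $2$-colouring up to swapping the colours. Write $a_T\ge 0$ for the difference between its two class sizes, so $a_T\le |T|$. A proper $2$-colouring of the forest amounts to choosing a sign $\varepsilon_T\in\{\pm1\}$ for each component, and the two global class sizes are then $\frac{N}{2}\pm\frac12\sum_T\varepsilon_T a_T$, where $N$ is the number of vertices. It therefore suffices to find a set $X$ with $|X|\le R$ such that the components of $F\setminus X$ admit signs with
\[
  \Bigl|\sum_T \varepsilon_T a_T\Bigr|\le \frac{2n}{2^R}.
\]

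The basic tool is the greedy, or alternating, signing. List the values $a_{T_1}\ge a_{T_2}\ge\dots$ and give them alternating signs. The resulting sum lies in $[0,a_{T_1}]$, and more precisely equals $a_{T_1}-a_{T_2}+a_{T_3}-\cdots$. So the whole task reduces to making this alternating sum small. The second tool is the centroid fact: every tree has a centroid, and deleting the centroid of a tree on $s$ vertices leaves components with at most $s/2$ vertices each.

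I would then argue by induction on $R$, with the invariant that after $j$ deletions the alternating sum $\Phi_j$ of the sorted component imbalances of $F\setminus X_j$ is at most $2n/2^{j}$. For $j=0$ this holds trivially, since $\Phi_0\le n$. In the inductive step I would not simply cut the largest component. Instead I would look at the sorted list and use the telescoping structure $\Phi=(a_1-a_2)+(a_3-a_4)+\cdots$: only a component whose imbalance substantially exceeds its partner's contributes to $\Phi$. Deleting the centroid of such a component $T$ replaces $a_T$ by imbalances of pieces of size at most $|T|/2$, and these pieces can themselves be signed alternately so that their net contribution is at most $|T|/2$. The aim is to show that one well-chosen deletion at least halves $\Phi$. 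For the base case $R=1$ this works directly. If the largest component has more than $n/2$ vertices, deleting its centroid makes every component have at most $n/2$ vertices, and greedy signing then gives imbalance at most $n/2\le 2n/2$. Otherwise no deletion is needed.

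The main obstacle is the inductive halving step. When many components of comparable size (say about $n/2^j$) are present, a single centroid deletion cannot shrink all of them. The argument must therefore exploit the fact that equal-sized components already cancel in the alternating sum, so that only the top unpaired contribution matters.

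My first attempt would be to strengthen the invariant to a statement about the sorted multiset of imbalances: after $j$ steps, every element exceeding $n/2^j$ is matched by a partner within $n/2^j$ of it. I would then show that a centroid deletion in the single unmatched large component restores this invariant at level $j+1$. If that bookkeeping fails, the fallback is to apply the statement recursively inside the largest tree. This means finding, within that one tree, a set of at most $R-1$ vertices whose removal balances it to within $|T|/2^{R-1}\le n/2^{R}$, while the remaining components (each of size at most $n/2$ after the first centroid cut) are absorbed by greedy signing.
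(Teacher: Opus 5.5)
Your proposal has a genuine gap, and it lies in your basic tool, not only in the unfinished halving step. You have the right ingredients (centroid deletion and per-component colour swaps), but reducing the task to making the sorted alternating sum $\Phi$ small loses too much. Alternating signs let comparable components cancel one another, but never let many small components absorb the imbalance of a large one. As a result, $\Phi_j\le 2n/2^j$ cannot be maintained in general.

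For instance, let $F$ be the disjoint union of the stars $K_{1,i+1}$, $1\le i\le m$, so the imbalances are $1,\dots,m$ and $n=(m^2+5m)/2$. Write $\Phi=\#\{x\ge1: N(x)\text{ odd}\}$ with $N(x)=\#\{T: a_T\ge x\}$. The parity of $N$ alternates on $\{2,\dots,m\}$, and deleting any $R$ vertices changes that pattern in only $O(R)$ places. Hence every choice of $X$ leaves $\Phi\ge m/2-R-1$. For $2^R\approx 8m$ and large $m$ this exceeds the target $2n/2^R\approx m/8$. Yet signs with $|\sum_T\varepsilon_Ta_T|\le 1$ already exist for $X=\varnothing$.

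The same example shows that your strengthened invariant is not a strengthening: all consecutive gaps equal $1$, but they accumulate to $\Phi\approx m/2$. The fallback fails too. Recursing into the \emph{largest} component and signing the others greedily leaves the others' signed sum uncontrolled. For example, a balanced path on $n/2$ vertices next to stars of imbalance about $n/3$ and $n/6$ ends with imbalance about $n/6$, however the path is treated.

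The missing idea, which is how the paper argues, is a discrete intermediate-value step that singles out the one component worth refining. Delete a centroid $v_1$ of $F$, take any proper $2$-colouring of $F\setminus v_1$, and swap the colours on its components one at a time. Swapping all of them reverses the majority colour, so some component $T_1$ is \emph{pivotal}: its swap flips the majority. This forces the signed sum over all other components to be smaller than $a_{T_1}$ in absolute value, so each class is within $|T_1|\le n/2$ of half.

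Next, delete a centroid $v_2$ of $T_1$ and swap the components of $T_1\setminus v_2$ one at a time. This interpolates between the two colourings on either side of the flip, producing a new pivot $T_2$ with $|T_2|\le |T_1|/2$. After $R$ rounds the pivot has at most $n/2^R$ vertices, which gives the bound. Your worry about many comparable components disappears, because only the pivot ever needs refining: everything else is already balanced to within the pivot's imbalance.
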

\begin{proof}
    Let $v_1$ be a centroid of $F$, and consider a proper $2$-colouring of $F \backslash v_1$. Suppose without loss of generality that this initial colouring is $2$-dominant. By interchanging the colour classes on each tree in $F \backslash v_1$ in turn, we obtain a $2$-dominant $2$-colouring $c_1$ of $F \backslash v_1$ such that for some component $T_1$ of $F \backslash v_1$, interchanging the colour classes in $T_1$ results in a $1$-dominant colouring of $F \backslash v_1$. Repeating this idea, we now find a centroid $v_2$ of $T_1$ and consider the effect on $c_1$ of interchanging the colour classes of each component of $T_1 \backslash v_2$ in turn. We obtain a $2$-dominant proper colouring $c_2$ of $F \backslash \{v_1, v_2\}$, and a component $T_2$ of $T_1 \backslash v_2$ such that interchanging the colour classes in $T_2$ results in a $1$-dominant colouring of $F \backslash \{v_1, v_2\}$. Observe that by the definition of a centroid, $T_1$ has at most $\frac{n}{2}$ vertices, and so $T_2$ has at most $\frac{n}{4}$ vertices. Iterating this process $m = \min(R, \lfloor\log_2(n)\rfloor)$ times, we obtain a set $X = \{v_1,\dots, v_m\}$ of at most $R$ vertices, and a $2$-dominant proper colouring $c_R$ of $F \backslash X$ such that interchanging the colour classes on a component of $F \backslash X$ with at most $\frac{n}{2^m}$ vertices produces a $1$-dominant colouring. The lemma statement immediately follows for $R \leq \log_2(n)$. If $R > \log_2(n)$, then we have obtained a proper $2$-colouring of $F \backslash X$ in which the sizes of the colour classes differ by at most $1$, and we can delete up to one additional vertex to precisely balance the colour classes. The lemma statement follows.
\end{proof}

We now obtain a bound on the uniformity of a spanning forest in terms of $\Delta$. Recall that an $n$-vertex graph $H$ with average degree $d$ is \defn{$(r,C)$-uniform} if there exists a partition of $V(H)$ into $r$ independent sets with sizes $\{n_i\}_{i\in[r]}$ and average degrees $\{d_i\}_{i\in[r]}$ such that $\sum_{i\in[r]}\frac{n_i}{n}(d_i-d)^2\leq C^2$.
\begin{lemma}\label{lem:forest_uniformity_fine}
    Let $F$ be a forest on $n$ vertices with maximum degree $\Delta$. Then $F$ is $(4,\bigO(\Delta n^{\frac{-1}{2}}))$-uniform.
\end{lemma}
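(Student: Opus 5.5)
We want a partition of $V(F)$ into $4$ independent sets $U_1,\dots,U_4$ with $\sum_i \frac{n_i}{n}(d_i-d)^2 = \bigO(\Delta^2/n)$. It suffices to make every degree sum nearly proportional to its part size, i.e.\ to arrange $|m_i - d n_i| = \bigO(\Delta)$ with $n_i = \Theta(n)$ for each $i$, where $m_i=\sum_{v\in U_i} d(v)$. Indeed then $|d_i-d| = \bigO(\Delta/n)$ and the weighted sum is $\bigO(\Delta^2/n^2)\le \bigO(\Delta^2/n)$; the weaker requirement $|m_i-dn_i|=\bigO(\Delta\sqrt n)$ would also suffice. We may assume $n$ is large compared with $\Delta$, since otherwise the bound $C=\Delta$ is trivial (any proper colouring of a forest with two colours has all $|d_i-d|\le\Delta$, and $\Delta\le \bigO(\Delta n^{-1/2})$ when $n=\bigO(1)$; for moderate $n$ one can absorb the constant).

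The plan is to apply \cref{lem:forest_recurse} twice, using two different weightings, and then intersect. First I would adapt the proof of \cref{lem:forest_recurse} to a weighted version. The centroid/swapping argument works verbatim for any nonnegative vertex weight $\omega$: choose weighted centroids and flip components one at a time. Each flip changes the imbalance by at most the weight of the flipped component, and the components shrink geometrically in weight. The output is a set $X$ of at most $R$ vertices together with a proper $2$-colouring of $F\setminus X$ whose two classes have $\omega$-weights differing by at most $\omega(F)/2^R$ plus the weight of a single vertex.

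I would balance two quantities simultaneously: the vertex count ($\omega\equiv1$) and the degree sum ($\omega=d$). A single $2$-colouring cannot in general balance both at once. However, there is a natural $4$-colouring route. Take the proper $2$-colouring $(A,B)$ of $F\setminus X$ balanced for vertex counts with $R=\lceil\log_2 n\rceil$, so the counts differ by $\bigO(1)$ and $|X|=\bigO(\log n)$. Then split each of $A$ and $B$ into two halves: $A=A_1\cup A_2$ and $B=B_1\cup B_2$. Every subset of an independent set is independent, so the splitting of each class is completely free. The vertices of $X$ also have to be placed somewhere, and since $X$ may not be independent with respect to $A$ or $B$, this needs care. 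With four parts we can, for instance, put the vertices of $X$ into parts $U_3,U_4$ only, choosing for each its part greedily among the two; this stays proper because $X$ is small and we may first move neighbours of $X$ out of the relevant parts. In any case $X$ contributes only $\bigO(\Delta\log n)$ to degree sums, which is fine if we aim for $\bigO(\Delta\sqrt n)$.

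The key freedom is the split inside each class. Within a class $A$, order its vertices by degree and place them alternately into $A_1$ and $A_2$. This gives $|A_1|-|A_2|\in\{0,\pm1\}$, and the degree sums of the two halves differ by at most $\Delta$ (a standard alternating-sum bound). Thus $m(A_1)\approx m(A_2)$ and $m(B_1)\approx m(B_2)$, but $m(A)$ need not be close to $m(B)$. This is precisely the obstacle, and I expect it to be the main difficulty. To handle it, pair $A_1$ with $B_1$ and $A_2$ with $B_2$ differently: since a degree imbalance between $A$ and $B$ cannot be fixed by splitting, I would instead choose the $2$-colouring itself to balance the weight $\omega(v)=d(v)-d$. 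This weight has total zero, though it is signed. The swapping argument still works for signed weights when we measure the weighted imbalance, with each flip changing it by at most $\Delta$ times the component size. The cost is that vertex counts are then no longer balanced. The fix is to run the flipping procedure for the vector weight $(1, d(v)-d)$, using one extra centroid level per coordinate. Concretely, first obtain a colouring that is count-balanced up to $n/2^R$. Then, among the many small components present at the last levels of the recursion, flip a subfamily to correct the degree imbalance, using a greedy or Steinitz-type argument on $2$-dimensional vectors of norm $\bigO(\Delta\cdot\text{size})$. The residual error is then $\bigO(\Delta\sqrt n)$ in degree sum and $\bigO(\sqrt n)$ in counts. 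Taking the two halves of each class via the alternating split then gives four parts with $n_i=n/4+\bigO(\sqrt n)$ and $m_i - dn_i=\bigO(\Delta\sqrt n)$, hence $|d_i-d|=\bigO(\Delta n^{-1/2})$, as required.
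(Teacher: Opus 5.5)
The skeleton of your proposal is essentially the paper's: a count-balanced proper $2$-colouring of $F\setminus X$ from \cref{lem:forest_recurse}, reinserting $X$ on one side, properly $2$-colouring the small forest this creates, and splitting the remaining vertices of each class alternately in sorted-degree order. There are, however, genuine gaps at the two points that matter.

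\textbf{The obstacle you work around does not exist.} Take a proper $2$-colouring $(A,B)$ of $F\setminus X$, with degrees taken in $F$. Every edge of $F\setminus X$ has exactly one end in each class, so $m(A)=e(F\setminus X)+e(A,X)$ and $m(B)=e(F\setminus X)+e(B,X)$. Hence $|m(A)-m(B)|\le\Delta|X|$. For $X=\varnothing$ one even has $\sum_{v\in A}(d(v)-d)=d(n/2-|A|)$. So balancing vertex counts already balances degree sums, and also the weight $d(v)-d$. Your vector-weight/Steinitz step is therefore unnecessary. As written it is also not an argument: nothing guarantees enough small components at the last levels of the recursion, and the residuals $O(\Delta\sqrt n)$ and $O(\sqrt n)$ are asserted rather than derived.

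\textbf{Your accounting for reinserting $X$ is wrong.} This is the genuinely delicate step. The vertices of $X$ themselves cost only $\Delta|X|$. But keeping the parts independent forces the up to $\Delta|X|$ neighbours of $X$ into prescribed halves, and these can carry degree mass of order $\Delta^2|X|$. An alternating split of the remaining free vertices, which makes them equal, does not cancel this. With $R=\lceil\log_2 n\rceil$ (or any $R=\Theta(\log n)$) you only get $|m_i-dn_i|=O(\min(\Delta^2\log n,n))$. That is larger than the required $O(\Delta\sqrt n)$ throughout the range $\sqrt n/\log n\ll\Delta\ll\sqrt n$.

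\textbf{The missing idea is to trade $R$ off against $\Delta$.} The paper takes $R=\sqrt n/\Delta$. Then the two classes, with $X$ distributed between them, span at most $\Delta R=\sqrt n$ edges, touching at most $2\sqrt n$ vertices of total degree $O(\Delta\sqrt n)$. Meanwhile the count error $n/2^R\le n/R=\Delta\sqrt n$ still costs only $O(\Delta\sqrt n)$ in degree sums, because $d<2$. This choice needs $R>1$, so the paper treats $\Delta\ge\sqrt n/4$ separately. There $C^2=\Omega(1)$, so two classes of size $\Omega(n)$ with average degree $O(1)$, plus two singleton classes, suffice.
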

\begin{proof}
    Let $c(F)$ denote the number of components of $F$, so that $F$ has $n-c(F)$ edges, and let $d = \frac{2(n-c(F))}{n}$ denote the average degree of $F$. We consider two cases.
    
    \noindent\textbf{Case 1: $\Delta \geq \frac{\sqrt n}{4}$.}
        Apply \cref{lem:forest_recurse} with $R=2$ to obtain a set $X$ of at most $2$ vertices such that $F \setminus X$ has a proper $2$-colouring with colour classes $V_1$ and $V_2$ each of $\Omega(n)$ size. Arbitrarily move up to $2$ vertices from $V_1$ into $X$ so that $|X| = 2$. Note that the total degree of each of $V_1$ and $V_2$ is at most $n-c(F) < n$, and so the average degree of each of $V_1$ and $V_2$ is $\bigO(1)$. Colour the vertices in $X$ with additional colours $3$ and $4$, respectively, to obtain a proper $4$-colouring of $F$. Each of the at most two additional colour classes contains at most $1$ vertex, and has average degree at most $\Delta$. Since $d<2$, it follows that
    \begin{align*}
        \sum_{i=1}^4 \frac{|V_i|}{n}(d_i-d)^2
        = \bigO\left(\frac{|V_1|+|V_2|}{n}\right)
          + \bigO\left(\frac{\Delta^2}{n}\right) = \bigO\left(\frac{\Delta^2}{n}\right),
    \end{align*}
    since $\Delta \geq \sqrt n/4$. So by definition, $F$ is $(4,O(\Delta n^{-1/2}))$-uniform.

    \noindent\textbf{Case 2: $\Delta \leq \frac{\sqrt n}{4}$.}
    Let $R = \lfloor\frac{\sqrt n}{\Delta}\rfloor$ and note that since $\Delta \leq \sqrt n/4$, we have $R > 1$. For simplicity, we will use $R = \frac{\sqrt{n}}{\Delta}$ in the following analysis, as it does not affect the asymptotics. Apply \cref{lem:forest_recurse} with $R$ to obtain a set $X$ of at most $R$ vertices such that $F\setminus X$ has a proper $2$-colouring with within $\frac{n}{2^R}$ of $\frac{n-|X|}{2}$ vertices in each of the two colour classes. Distribute the vertices of $X$ as evenly as possible between colours $1$ and $2$, and let $V_1$ and $V_2$ be the resulting (not necessarily proper) colour classes. Then $|V_i|$ is within $\frac{n}{2^R}$ of $\frac{n}{2}$ for each $i \in [2]$. Observe that since $|X| \leq R$, we have that $F[V_1] \cup F[V_2]$ is a forest containing at most $\Delta R = \sqrt{n}$ edges, and therefore at most $2\sqrt{n}$ vertices. We denote by $F_i$ the subgraph of non-isolated vertices in the induced forest $F[V_i]$.

    Our strategy is to partition each colour class into two classes to obtain a proper $4$-colouring of $F$ such that the colour classes have nearly equal sizes, and nearly equal total degrees. We will apply the same partitioning argument to both $V_1$ and $V_2$. Let $I$ denote the independent set of vertices in $V_1$ that are not incident with edges of $F_1$, and partition the vertices in $V_1$ into two sub-colour classes, $a$ and $b$, as follows. First, take any proper $2$-colouring of $F_1$ using $a$ and $b$. Then, for each possible degree $x$ of a vertex in $I$, proceed sequentially from $x=0$ to $x=\Delta$, and colour half of the degree-$x$ vertices in $I$ with $a$ and the other half with $b$, alternating which colour class receives an extra vertex when there are parity constraints. In this $2$-colouring of $I$, the vertices are divided as evenly as possible between $a$ and $b$, and the total degrees of each class $a$ and $b$ in $I$ are within $\Delta$ of each other. 
    
    Since $I$ contains at least $|V_1| - 2\sqrt{n}$ vertices, and the vertices in $I$ are divided evenly between $a$ and $b$, we have
    \begin{align*}
        \frac{\frac{n}{2} - \frac{n}{2^R} + \bigO(\sqrt{n})}{2} \leq |V_c| \leq \frac{\frac{n}{2} + \frac{n}{2^R} + \bigO(\sqrt{n})}{2}.
    \end{align*}
    Furthermore, since $I$ is incident with every edge in $F$ that is not incident with $F_1$ or $F_2$, the total degree of $I$ is at least $(n-c(F)) - 2\Delta\sqrt{n}$ and at most $(n-c(F))$. Each colour class $c \in \{a,b\}$ receives within $\Delta$ of the  $\frac{(n-c(F))}{2} + \bigO(\Delta\sqrt{n})$ equal share of the incident edges to $I$, and the additional contribution of the degrees from vertices in $F_1$ is $\bigO(\Delta\sqrt{n})$. Hence,
    \begin{align*}
        \sum_{v \in V_c} d(v) = \frac{(n-c(F)) +  \bigO(\Delta\sqrt{n})}{2}.
    \end{align*}
    Note that $\frac{n}{2^R} \leq \frac{n}{R} = \Delta\sqrt{n} \leq \frac{n}{4}$ and $d \leq 2$. It follows that each colour class $c$ has average degree $d_c$ satisfying
    \begin{align*}
        d_c \leq \frac{(n-c(F)) +  \bigO(\Delta\sqrt{n})}{\frac{n}{2} - \frac{n}{2^R} + \bigO(\sqrt{n})} = d + \frac{d\bigO(\frac{n}{2^R}+\sqrt{n}) + \bigO(\Delta\sqrt{n})}{\frac{n}{2} - \frac{n}{2^R} + \bigO(\sqrt{n})} = d + \frac{\bigO(\Delta\sqrt{n})}{\Omega(n)} = d + \bigO(\Delta n^{\frac{-1}{2}})
    \end{align*}
    and similarly $d_c \geq d - \bigO(\Delta n^{\frac{-1}{2}})$. Applying the same partitioning strategy with $2$ additional colours to $V_2$, we obtain a proper $4$-colouring of $F$ in which every colour class has an average degree within $\bigO(\Delta n^{\frac{-1}{2}})$ of the average degree $d$ of $F$. This completes the proof.
\end{proof}

Finally, we apply \cref{thm:general_uniformity_error} with our improved uniformity result for forests.
\forestError*
\begin{proof}
     By  
     \cref{lem:forest_uniformity_fine}, we have that $F$ is $(4,\bigO(\frac{\Delta}{\sqrt{n}}))$-uniform. Thus, applying \cref{thm:general_uniformity_error}, we have
     \begin{align*}
        f_\vecf(F') &= \bigO(\sqrt{8k(4\Delta^2 + \Delta^2)} + 4\Delta k^2) = \bigO(\Delta k^2),
    \end{align*}
     as required.
\end{proof}

\subsection{Existence of colour-balanced spanning trees}

We conclude with the proof of \cref{thm:colour_balanced_tree}, which characterises when edge-coloured complete graphs admit colour-balanced spanning trees, if we no longer fix the isomorphism class of the tree. To avoid parity constraints, we restrict our attention to $k$-edge-coloured graphs of the form $K_{2kt+1}$ for some integer $t \geq 1$. It is shown in \cite{caro2022zero} that when $k=2$ and $K_{4t+1}$ is coloured such that $|c^{-1}(i) \cap K_{4t+1}| > {2t \choose 2}$ for $i \in \{1,2\}$, there is always a colour-balanced spanning tree in $K_{4t+1}$. By applying a standard optimisation result for matroids, we show that this is just a special case of a more general condition for $k$-edge-coloured complete graphs. We begin with some necessary definitions.

A \defn{matroid} $M$ is an ordered pair $(E, \mathcal{I})$ consisting of a \defn{ground set} $E$ and a collection $\mathcal{I}$ of subsets of $E$ satisfying the following properties.
\begin{enumerate}
    \item[(I1)] $\varnothing \in \mathcal{I}$,
    \item[(I2)] If $I \in \mathcal{I}$ and $I' \subseteq I$ then $I' \in \mathcal{I}$, and
    \item[(I3)] If $I_1$ and $I_2$ are in $\mathcal{I}$ and $|I_1| < |I_2|$ then there is some element $e \in I_2 \backslash I_1$ such that $I_1 \cup \{e\} \in \mathcal{I}$.
\end{enumerate}
The sets in $\mathcal{I}$ are known as the \defn{independent sets} of $M$. Given a set $A \subseteq E$, the \defn{rank} of $A$ is the cardinality of the largest independent set contained in $A$, and the \defn{rank function} $r$ of $M$ is the function that maps subsets of $E$ to their rank. For further background on matroid theory, see \cite{oxley}. We will apply the following well-known result of Edmonds.
\begin{theorem}[Edmonds, \cite{edmonds1970submodular}]\label{thm:edmonds}
    Let $M_1$ and $M_2$ be matroids with independent sets $\mathcal{I}_1$ and $\mathcal{I}_2$, rank functions $r_1$ and $r_2$ and a common ground set $E$. Then
    \begin{align*}
        \max_{I \in \mathcal{I}_1 \cap \mathcal{I}_2}|I| = \min_{U \subseteq E}(r_1(U) + r_2(E\backslash U)).
    \end{align*}
\end{theorem}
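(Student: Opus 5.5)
We prove the min–max equality by establishing the two inequalities separately. The direction $\max\leq\min$ is immediate: for any $I\in\mathcal{I}_1\cap\mathcal{I}_2$ and any $U\subseteq E$, the set $I\cap U$ is independent in $M_1$ and $I\setminus U$ is independent in $M_2$. Hence
\[
|I| = |I\cap U| + |I\setminus U| \leq r_1(U)+r_2(E\setminus U).
\]
For the reverse direction we use the standard augmenting-path argument. Take $I\in\mathcal{I}_1\cap\mathcal{I}_2$ of maximum size. Then we either find a larger common independent set, which contradicts maximality, or we exhibit a $U$ with $r_1(U)+r_2(E\setminus U)=|I|$.

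First we build the exchange digraph $D(I)$ on vertex set $E$. For $x\in I$ and $y\in E\setminus I$, add an arc $x\to y$ if $I-x+y\in\mathcal{I}_1$, and an arc $y\to x$ if $I-x+y\in\mathcal{I}_2$. Let
\[
X_1=\{y\notin I: I+y\in\mathcal{I}_1\},\qquad X_2=\{y\notin I: I+y\in\mathcal{I}_2\}.
\]
Suppose $D(I)$ contains a directed path from $X_1$ to $X_2$, and take a shortest one, $y_0,x_1,y_1,\dots,x_m,y_m$. We claim that $I'=I\triangle\{y_0,x_1,\dots,y_m\}$ lies in $\mathcal{I}_1\cap\mathcal{I}_2$; since $|I'|=|I|+1$, this contradicts maximality.

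This claim is the main obstacle. To prove it, we use the exchange lemma: if $J\in\mathcal{I}$ and $J'=J-\{x_1,\dots,x_m\}+\{z_1,\dots,z_m\}$ with each $J-x_i+z_i$ independent, and the matching $x_i\leftrightarrow z_i$ is the \emph{unique} perfect matching in the bipartite graph of valid single exchanges between these two sets, then $J'$ is independent. We prove the lemma by induction on $m$. Uniqueness lets us order the pairs so that $J-x_i+z_j$ is dependent whenever $i<j$. Then $z_1$ lies only in the fundamental circuit of $J$ that does not contain $x_2,\dots,x_m$, and we peel off one exchange at a time using circuit uniqueness.

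To apply the lemma in $M_1$, we extend the ground set by a dummy element so that $y_0$ can be treated as an exchange. Minimality of the path then guarantees that no shortcut arcs $x_i\to y_j$ with $j>i$ exist, which is exactly the uniqueness hypothesis. The same argument, with arcs reversed and $y_m$ playing the role of $y_0$, handles $M_2$.

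If no $X_1$–$X_2$ path exists, let $U$ be the set of elements of $E$ from which $X_2$ is reachable in $D(I)$. Then $X_2\subseteq U$ and $X_1\cap U=\varnothing$. We show $r_2(E\setminus U)=|I\setminus U|$. If not, there is $y\in (E\setminus U)\setminus I$ with $(I\setminus U)+y\in\mathcal{I}_2$. Since $y\notin X_2$, the set $I+y$ contains a unique $M_2$-circuit, and this circuit must contain some $x\in I\cap U$. That gives an arc $y\to x$, so $y$ reaches $X_2$, contradicting $y\notin U$.

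Symmetrically, $r_1(U)=|I\cap U|$. If not, there is $y\in U\setminus I$ with $(I\cap U)+y\in\mathcal{I}_1$. Since $y\notin X_1$, the $M_1$-circuit of $I+y$ contains some $x\in I\setminus U$, giving an arc $x\to y$. Then $x$ reaches $X_2$ through $y$, so $x\in U$, a contradiction.

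Summing the two equalities gives $r_1(U)+r_2(E\setminus U)=|I|$, which completes the proof.
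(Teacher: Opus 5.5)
Your argument is correct. It is the standard augmenting-path proof of the matroid intersection theorem (as in Schrijver's \emph{Combinatorial Optimization}). The paper gives no proof at all and simply imports the result from Edmonds as a black box. Citing is all the paper needs. Note, though, that its application to colour-balanced spanning trees uses only the hard inequality $\max\geq\min$: the authors show the minimum is at least $2kt$ and conclude that a common independent set of size $2kt$ exists. That is exactly the half your augmenting-path argument supplies. Your route also has the benefit of being algorithmic, so it would actually construct the balanced tree in polynomial time.

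Two details should be tightened.

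First, in your sketch of the exchange lemma the indices are reversed. With the ordering ``$J-x_i+z_j$ dependent whenever $i<j$'', it is $x_1$ that lies in none of the fundamental circuits $C(J,z_j)$ for $j\geq 2$. These circuits, and hence all remaining single-exchange relations, therefore survive the swap $J\mapsto J-x_1+z_1$, and induction applies. Alternatively, $C(J,z_m)$ avoids $x_1,\dots,x_{m-1}$, so it remains the unique circuit of $z_m$ in the independent set obtained after the first $m-1$ swaps, and removing $x_m$ breaks it. The circuit of $z_1$, by contrast, need not avoid $x_2,\dots,x_m$.

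Second, the dummy element should be a coloop $t$, so that $I+t$ is independent and $(I+t)-t+y_0=I+y_0$. The uniqueness hypothesis then also requires $I+y_j\notin\mathcal{I}_1$ for $j\geq 1$, and on the $M_2$ side $I+y_j\notin\mathcal{I}_2$ for $j<m$. Both follow from minimality of the path, just as the absence of shortcut arcs does.

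The dual half, with $U$ the set of elements that can reach $X_2$, is exactly right.
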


\colourBalancedTree*
\begin{proof}
    We define two matroids on the common ground set $E = E(K_{2kt+1})$. Let ${M_1 = M(K_{2kt+1})}$ be the cycle matroid of $K_{2kt+1}$, that is the matroid whose independent sets $\mathcal{I}_1$ consist of the subsets of $E$ that induce forests in $K_{2kt+1}$. Let $M_2$ be the partition matroid on the collection of $k$ disjoint monochromatic subsets of $E$, each with capacity $2t$. That is, the independent sets $\mathcal{I}_2$ of $M_2$ are any subset of $E$ containing at most $2t$ edges from each colour class. Let $r_1$ and $r_2$ denote the respective rank functions of $M_1$ and $M_2$. Observe that if $I \in \mathcal{I}_1 \cap \mathcal{I}_2$, then $I$ is a forest of $K_{2kt+1}$ containing at most $2t$ edges in each colour class. If $|I| = 2kt$, then $I$ is a colour-balanced spanning tree of $K_{2kt+1}$. Hence, by \cref{thm:edmonds}, it suffices to show that 
    \begin{equation*}
        \min_{U \subseteq E}(r_1(U) + r_2(E \backslash U)) = 2kt.
    \end{equation*}
    Suppose then that $U$ is a minimiser of this function, and consider $r_2(E \backslash U)$. If $E \backslash U$ contains at most $2t$ edges in some colour $i$, and $e$ is a colour $i$ edge contained in $E \backslash U$, then we have that $r_2(E \backslash U - e) = r_2(E \backslash U) - 1$ and $r_1(U \cup e) \leq r_1(U) + 1$. It follows that moving $e$ into $U$ does not increase the value of $r_1(U) + r_2(E \backslash U)$, and so we may assume that $E \backslash U$ contains more than $2t$ edges from each colour class it intersects. Let $j$ denote the number of colour classes that are contained entirely in $U$. Since $E \backslash U$ contains more than $2t$ edges from each of the $k-j$ remaining classes, it follows that $r_2(E \backslash U) = (k-j)2t$. Moreover, $U$ contains at least ${2jt \choose 2} + 1$ edges by assumption, and $r_1(U)$ is given by the number of edges in a spanning forest of the edge-induced subgraph $K_{2kt+1}[U]$. The smallest clique large enough to contain every edge in $U$ has $2jt + 1$ vertices, and so $r_1(U) \geq 2jt$. Hence,
    \begin{align*}
        r_1(U) + r_2(E \backslash U) \geq 2jt + (k-j)2t \geq 2kt,
    \end{align*}
    as required.
\end{proof}
\cref{cor:balanced_tree} now follows immediately from \cref{thm:colour_balanced_tree}. We remark that the bounds in \cref{thm:colour_balanced_tree} are sharp. Indeed, the proof gives a construction that verifies this -- any edge-colouring of $K_{2kt+1}$ in which all edges labelled by some set of $j$ colours are contained in a clique on $2jt$ vertices does not admit a colour-balanced spanning tree. Hence, if there are at most ${2jt \choose 2}$ such edges, a colour-balanced spanning tree cannot be guaranteed.
 
\section{Lower bounds}\label{sec:lower-bounds}

In this section we establish lower bounds for the colouring problem in both the bipartite and complete graph settings (which also establishes the same lower bounds for the vector form of each problem). In the complete bipartite graph, we show that for all values of $t$ and $k$, it is not possible to guarantee the existence of a colour-balanced perfect matching in $K_{kt,kt}$, and so there must always be at least some constant error term. Furthermore, in \cref{thm:lower-Knn} we show that in many cases, every perfect matching $M$ of $K_{kt,kt}$ satisfies $f_c(M) \geq \sqrt{k/2}$.

In the complete graph $K_{2kt}$, it appears the only previously known lower bound comes from the $3$-edge-coloured $K_6$ counterexample to the existence of colour-balanced perfect matchings identified by Pardey and Rautenbach \cite{pardey2022}. It is natural to ask whether colour-balanced perfect matchings of colour-balanced complete graphs may always exist, provided $t$ is sufficiently large with respect to $k$. In \cref{thm:lower_Kn} we extend the counterexample of Pardey and Rautenbach to an infinite family of counterexamples, proving that this is not the case. In fact, the existence of a colour-balanced perfect matching in a colour-balanced $K_{2kt}$ is only ever guaranteed in the special case where $k=2$.

For brevity in this section, we will refer to edge-colourings that do not admit colour-balanced perfect matchings as being \defn{cbm-avoiding}. We begin with bounds for the complete bipartite graph. The following argument uses a slight modification of a construction for equi-$n$-squares with no transversals of size $n-\bigO(\sqrt{n})$ given by Chakraborti, Christoph, Hunter, Montgomery, and Petrov~\cite{chakraborti2024}.
\begin{theorem}\label{thm:lower-Knn}
    For infinitely many pairs $(k,t)$, there exist colour-balanced $k$-edge-colourings of $K_{kt,kt}$ such that every perfect matching $M$ of $K_{kt,kt}$ satisfies $f_c(M) \geq \sqrt{k/2}$.
\end{theorem}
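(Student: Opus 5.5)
The plan is to build a colouring with an algebraic invariant: some quantity attached to a perfect matching is forced to take one value, while every colour-balanced matching would give a different value. Perturbing a balanced colour-count vector cheaply cannot reconcile the two.

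\textbf{Construction.} Set $n=kt$ and index both sides of $K_{n,n}$ by $\mathbb{Z}_n$, writing $A=\{a_0,\dots,a_{n-1}\}$ and $B=\{b_0,\dots,b_{n-1}\}$. Give the edge $a_ib_j$ the ``sum label'' $s(a_ib_j)=i+j \bmod n$. Every perfect matching $M$ satisfies
\[
\sum_{e\in M} s(e)\equiv 2\sum_{i}i \equiv 0 \pmod n,
\]
while a transversal-type count shows that a ``typical'' matching has label sum $n/2 \pmod n$ when $n$ is even. Following Chakraborti, Christoph, Hunter, Montgomery and Petrov, I will group the residues of $\mathbb{Z}_n$ into $k$ colour classes of $t$ residues each. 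Each residue class of $s$ contains exactly $n$ edges, so every such colouring is automatically colour-balanced. The grouping will not be into consecutive intervals. Instead, I will choose it so that each colour $i$ carries a well-defined ``centre of mass'' $g(i)\in\mathbb{Z}_n$, with the sum labels inside colour $i$ symmetric about $g(i)$. Because of this symmetry, the within-colour deviations can be adjusted freely and contribute nothing to the invariant, so only the colour counts matter. Write $v_i=\abs{c^{-1}(i)\cap M}-t$ for the imbalance vector. The invariant then becomes a linear congruence $\sum_i g(i)v_i\equiv r\pmod n$, where $r\neq 0$ is the residue a balanced matching would violate.

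\textbf{Reducing to a lattice problem.} The bound $f_c(M)=\|v\|_1\ge\sqrt{k/2}$ becomes a statement about the lattice $\Lambda=\{v\in\mathbb{Z}^k:\sum_i v_i=0,\ \sum_i g(i)v_i\equiv 0\}$. Specifically, the coset $\Lambda+v^\ast$ prescribed by $r$ must contain no vector of $\ell_1$ norm below $\sqrt{k/2}$. A single congruence with $g(i)=i$ gives only a constant bound, because moving one unit from colour $0$ to colour $k/2$ costs $2$. I would therefore take $k=q$ prime, $t$ a multiple of $q$, and let the colours encode a quadratic function, so that a matching imposes two congruences modulo $q$: one on $\sum_i i\,v_i$ and one on $\sum_i i^2 v_i$. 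The extra constraint should come from the symmetry used in the construction (pairing residue $x$ with $-x$).

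\textbf{Main obstacle.} The hardest step is the final counting argument. For $\|v\|_1=2s$, the vector $v$ moves $s$ units, and the pair $\left(\sum_i i\,v_i,\sum_i i^2 v_i\right)$ ranges over at most about $\binom{k}{s}^2$ values. I expect to need a sharper, additive-combinatorial bound (Cauchy--Davenport type) showing that signed $s$-fold sums of points on the parabola $\{(i,i^2)\}$ in $\mathbb{Z}_q^2$ cover only about $s^2$ residues. Combined with the requirement that the target pair be nonzero and not in the small-sumset region, this should force $s^2\gtrsim k/2$, i.e.\ $\|v\|_1\ge\sqrt{k/2}$. I would first try to verify this bound directly for the parabola; if it fails, I would fall back on reproducing the equi-$n$-square argument of Chakraborti et al.\ verbatim and converting their transversal deficiency into the $f_c$ bound.
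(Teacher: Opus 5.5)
There is a genuine gap, and it is structural: the invariant you need cannot exist, and the counting step would fail even if it did.

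\textbf{The reduction to colour counts.} Symmetry of colour $i$'s residue set about $g(i)$ does not make the invariant depend only on colour counts. Colour $i$ contributes $g(i)n_i+\sum_{e\in M,\,c(e)=i}\bigl(s(e)-g(i)\bigr)$ to $\sum_{e\in M}s(e)$, and the second term is zero only on average, not for every $M$. You get a congruence in the counts only when the colour classes are cosets of $k\mathbb{Z}_n$. That gives the single congruence $\sum_i i\,v_i\equiv r\pmod k$, which you already note yields only a constant bound.

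\textbf{The two congruences cannot arise.} No colouring produces both of them. Suppose $\sum_{e\in M}\ell(e)\bmod q$ is the same for all perfect matchings. Comparing two matchings that differ by a $2\times 2$ switch gives $\ell(a_ib_j)+\ell(a_{i'}b_{j'})\equiv \ell(a_ib_{j'})+\ell(a_{i'}b_j)$, hence $\ell(a_ib_j)\equiv f(i)+g(j)$. Apply this to both $\ell=c$ and $\ell=c^2$, with colours identified with $\mathbb{Z}_q$ and $q$ an odd prime. This gives $2\bigl(f(i)-f(i')\bigr)\bigl(g(j)-g(j')\bigr)\equiv 0$ for all $i,i',j,j'$. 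So $f$ or $g$ is constant, and each edge's colour is determined by one endpoint. But then every perfect matching has the same colour counts, and for a colour-balanced colouring these are all $t$.

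\textbf{The counting step.} Even granting both congruences, the hoped-for sumset bound is false. With $P(x)=(x,x^2)$ you have $P(x)-P(y)=\bigl(d,\,d(x+y)\bigr)$ where $d=x-y$. So one unit move ($\norm{v}_1=2$) reaches every residue pair with nonzero first coordinate, and two moves reach all of $\mathbb{Z}_q^2$. The prescribed coset therefore always contains some $v$ with $\norm{v}_1\le 4$, so this route gives at most a constant bound. Reproducing the equi-$n$-square argument is a pointer, not a proof.

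\textbf{What the paper does.} The proof uses no algebraic invariant, only pigeonhole on an explicit block colouring. Take $k=2m^2$ with $t$ odd, and split each side into $2m$ blocks of size $mt$. Colour $E(A_i,B_j)\cup E(A_j,B_i)$ with $c_{ij}$ for $i<j$, and colour $E(A_{2i-1},B_{2i-1})\cup E(A_{2i},B_{2i})$ with $c_{ii}$. Call a colour rare if $M$ has fewer than $t$ edges of it. Suppose no colour meeting $A_{2i-1}\cup A_{2i}$ is rare. Since $t$ is odd, one diagonal block, say $E(A_{2i},B_{2i})$, contains at least $(t+1)/2$ edges of $M$. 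So fewer than $(2m-1)t$ edges of $M$ leave $A_{2i}\cup B_{2i}$. These carry only the $2m-1$ colours $c_{\{2i,j\}}$, so one of them is rare, a contradiction. Each colour meets at most two of the $m$ groups, so at least $m/2$ colours are rare. Since the total deficit equals the total surplus, $f_c(M)\ge m=\sqrt{k/2}$.
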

\begin{proof}
    Let $m$ and $t$ be positive integers with $t$ odd and set $k=2m^2$. Let the two vertex parts of $K_{kt,kt}$ be $A$ and $B$, and partition each part into $2m$ subsets, labelled such that $A=\bigcup_{i\in[2m]} A_i$ and $B=\bigcup_{i\in[2m]} B_i$, with $|A_i|=|B_i|=mt$ for all $i\in[2m]$. For each pair $\{i,j\}\subseteq [2m]$ with $i<j$, introduce a colour $c_{ij}$ and colour every edge in $E(A_i,B_j) \cup E(A_j,B_i)$ with $c_{ij}$. For each $i\in[m]$, introduce a colour $c_{ii}$ and colour every edge in $E(A_{2i-1},B_{2i-1}) \cup E(A_{2i},B_{2i})$ with $c_{ii}$. The adjacency matrix representation of the colouring we obtain is shown on the left of~\cref{fig:bipartite_constructions}. Observe that we have used $\frac{(2m)^2}{2}=k$ distinct colours, and each colour class induces two complete bipartite copies of $K_{mt,mt}$. Fix a perfect matching $M$. We call a colour \defn{rare} if $M$ has strictly fewer than $t$ edges of that colour. We will show that there are at least $\frac{m}{2}$ rare colours, implying that $f_c(M) \geq m = \sqrt{k/2}$, from which the theorem follows. For each $i \in [m]$, let $S_i$ be the set of colours that appear on edges incident with $A_{2i-1} \cup A_{2i}$. Each colour is incident with at most two of the sets $A_j$, and hence belongs to at most two of the sets $S_i$. It therefore suffices to show that each $S_i$ contains a rare colour. 
    
    Fix $i\in [m]$ and suppose that no colour in $S_i$ is rare. Since $c_{ii}\in S_i$, it follows that $M$ has at least $t$ edges in $E(A_{2i-1},B_{2i-1}) \cup E(A_{2i},B_{2i})$, and so either $M\cap E(A_{2i-1},B_{2i-1})$ or $M\cap E(A_{2i},B_{2i})$ has size at least $\frac{t+1}{2}$, since $t$ is odd. Without loss of generality, assume that $|M \cap E(A_{2i},B_{2i})| \geq \frac{t+1}{2}$. Let $E_i$ be the set of edges incident with $A_{2i} \cup B_{2i}$ which do not belong to $E(A_{2i},B_{2i})$. Since each vertex of $A_{2i} \cup B_{2i}$ is matched exactly once by $M$, we have that
    \[
        |M \cap E_i| = |A_{2i}|+|B_{2i}|-2|M \cap E(A_{2i},B_{2i})| < mt + mt-t = (2m-1)t.
    \]
     Now, each $E_i$ contains $2m-1$ colour classes (indexed by pairs $\{2i,j\}$ with $j\neq 2i$) and thus, by the pigeonhole principle, one of those colours appears on fewer than $t$ edges in $M$. This contradicts the assumption that $S_i$ does not contain a rare colour, completing the proof. 
\end{proof}

The construction given in the proof of \cref{thm:lower-Knn} requires $k$ to be equal to $2m^2$ for some integer $m$. We also show that there exist colour-balanced $k$-edge-colourings of $K_{kt,kt}$ that do not admit colour-balanced perfect matchings, for all values of $k \geq 2$ and $t \geq 1$.

\begin{theorem}\label{thm:knn_lower2}
    For any $k\geq2$ and $t\geq 1$, there exists a colour-balanced $k$-edge-colouring of $K_{kt,kt}$ with no colour-balanced perfect matching.
\end{theorem}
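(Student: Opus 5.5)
The plan is to build, for each $k\geq 2$ and $t\geq 1$, an explicit colouring in which one colour class (or a union of a few colour classes) has a rigid structure. That rigidity should force the number of its edges in any perfect matching to avoid the balanced value. Write $n=kt$ and let $A,B$ be the two sides of $K_{n,n}$. The basic gadget is a ``cross'' class: fix $A_1\subseteq A$ and $B_1\subseteq B$ with $|A_1|=a$ and $|B_1|=b$, and let $Z=E(A_1,B\setminus B_1)\cup E(A\setminus A_1,B_1)$. For a perfect matching $M$, put $x=|M\cap E(A_1,B_1)|$. Then
\[
|M\cap E(A_1,B\setminus B_1)|=a-x,\qquad |M\cap E(A\setminus A_1,B_1)|=b-x,
\]
so $|M\cap Z|=a+b-2x\equiv a+b \pmod 2$ for every $M$. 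If some colour class equals $Z$ and $a+b\not\equiv t\pmod 2$, that colour never appears exactly $t$ times. The remaining $k-1$ colours are then distributed over the complement $E(K_{n,n})\setminus Z$ so that every class has exactly $kt^2$ edges, which is possible by any partition of the leftover edges.

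The key steps, in order, are as follows. (1) Choose $a,b$ with $|Z|=a(n-b)+b(n-a)=n(a+b)-2ab=kt^2$ and $a+b\not\equiv t\pmod 2$. (2) Distribute the remaining edges arbitrarily among colours $2,\dots,k$, each receiving $kt^2$. (3) Apply the parity argument above to conclude that the colouring admits no colour-balanced perfect matching. The other useful gadget is the complementary ``diagonal'' class $E(A_1,B_1)\cup E(A\setminus A_1,B\setminus B_1)$ with $|A_1|=|B_1|=a$. A perfect matching meets it in $n-2a+2x$ edges, whose parity is always that of $n$. This gadget has $a^2+(n-a)^2$ edges. Having both gadgets gives two families of parity-rigid sets of different sizes.

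The main obstacle is the arithmetic in step (1): $n(a+b)-2ab=kt^2$ with the right parity need not be solvable for every $(k,t)$. For instance, $k=2$ with $a=t-1$, $b=t$ works only when $t$ is even. I would first allow the rigid set to be a union of $j$ colour classes rather than a single class. Then the target size becomes $jkt^2$, and the target count becomes $jt$, so the parity condition changes to $a+b\not\equiv jt\pmod 2$. Taking $j=1$ or $j=2$ and choosing between the cross and diagonal gadgets should cover the parity cases, since $jt$ is even whenever $j=2$. If an exact edge count is still out of reach, I would modify the gadget by adding or removing complete ``rows'' $E(\{v\},B)$. These contribute exactly one matching edge each and so shift the matched count by a known constant without destroying rigidity. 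With that adjustment I would solve the resulting quadratic Diophantine condition case by case. Any small leftover cases, such as $t=1$, I would handle by a direct construction or by checking all perfect matchings by hand.
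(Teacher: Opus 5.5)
There is a genuine gap. Your obstruction is purely mod $2$, and no mod-$2$ obstruction exists when $k$ and $t$ are both odd, already for $K_{3,3}$ with $k=3$, $t=1$. That is an infinite family, not a few small leftover cases, so case-by-case Diophantine work cannot close it.

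First, your gadgets are the only parity-rigid sets there are. An edge set $S$ meets every perfect matching in the same parity if and only if its indicator matrix is $\mathbb{F}_2$-orthogonal to every sum $P_\sigma+P_\tau$ of two permutation matrices. These sums span the matrices with all row and column sums zero, and the orthogonal complement of that space consists of the matrices $(u_i+v_j)_{i,j}$. So $S$ must be a cross set $Z(A_1,B_1)$. Your diagonal gadget is $Z(A_1,B\setminus B_1)$, and a full row $E(\{v\},B)$ is $Z(\{v\},\varnothing)$, so any modification that keeps rigidity stays inside this family.

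Now suppose $n=kt$ is odd. Then $|Z|=n(a+b)-2ab\equiv a+b \pmod 2$. If $Z$ is a union of $j$ colour classes, then also $|Z|=jkt^2\equiv jt \pmod 2$. Hence every perfect matching $M$ has $|M\cap Z|\equiv a+b\equiv jt$, which is exactly the parity a colour-balanced matching needs, for every $j$. The same computation mod $4$ rules out odd $k$ with $t\equiv 2\pmod 4$.

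The missing idea is a mod-$k$ invariant in place of the mod-$2$ one, which is what the paper uses. Split $A=\bigcup_i A_i$ and $B=\bigcup_i B_i$ with $|A_i|=t$ for all $i$, $|B_1|=t-1$, $|B_k|=t+1$, and $|B_i|=t$ otherwise. Colour $E(A_i,B_j)$ by $i+j \bmod k$.

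Since every $|A_i|=t$, each colour class has exactly $t\cdot kt$ edges. Every perfect matching $M$ satisfies $\sum_{e\in M}c(e)\equiv\sum_i i(|A_i|+|B_i|)\equiv -1\pmod k$. A colour-balanced matching instead has $\sum_{e\in M}c(e)=t\binom{k+1}{2}\equiv 0$ or $k/2\pmod k$. This settles $k\geq 3$, and $k=2$ with $t$ even. For $k=2$ with $t$ odd, taking all parts of size $t$ works, and that is exactly your diagonal gadget.

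For what it is worth, your parity framework does handle every even $k$, but not with $j\in\{1,2\}$ as you suggest. For example, with $k=6$, $t=1$, none of $j=1,2,4,5$ is solvable. The choice $j=k/2$, $a=kt/2$, $b=1$ works for all $t$: it gives $|Z|=n^2/2=jkt^2$ and $a+b=jt+1$. The odd-$k$ case genuinely needs the larger modulus.
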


\begin{proof}
    Let the two vertex parts of $K_{kt,kt}$ be $A$ and $B$. Partition each part into $k$ subsets labelled such that $A=\bigcup_{i\in [k]} A_i$ and $B=\bigcup_{i\in [k]} B_i$, with $|A_i| = t$ for all $i$, $|B_i| = t$ for all $1 < i < k$, $|B_1| = t-1$ and $|B_k| = t+1$. Consider the colour-balanced $k$-edge-colouring $c : E(A,B) \to [k]$ where, for each pair $(i,j)$, the edges in $E(A_i, B_j)$ are assigned the colour $i + j \pmod{k}$. The adjacency matrix representation of this colouring is shown on the right in~\cref{fig:bipartite_constructions}. Suppose that there exists a colour-balanced perfect matching $M$ of $K_{kt,kt}$. Since every colour occurs on exactly $t$ edges in $M$, we have that ${\sum_{e\in M}c(e)=t\sum_{i=1}^k i}$, which is equal to either $0 \pmod k$ or possibly $\frac{k}{2}$ if $k$ is even. However, since every vertex of $K_{kt,kt}$ is matched exactly once by $M$, we have
    \[
        \sum_{e\in M}c(e) = \sum_{i=1}^k (|A_i|+|B_i|)\cdot i = \left( 2t\sum_{i=1}^k i\right)+k-1 \equiv -1\pmod{k}
    \]
    which is not equal to either $0$ or $\frac{k}{2}$ when $k\geq 3$. Similarly, if $k=2$ and $t$ is even, we have $t\sum_{i=1}^k i\equiv 0\not\equiv -1\pmod{k}$. Hence, in both cases, this is a cbm-avoiding colouring of $K_{kt,kt}$.

    Finally, when $k=2$ and $t$ is odd, we instead take $|A_1| = |A_2| = |B_1| = |B_2| = t$, but keep the same edge-colouring rule as before. Now $\sum_{e\in M}c(e)$ is even for any perfect matching $M$ of $K_{2t,2t}$, while for a colour-balanced perfect matching the sum is $t+2t \equiv 1 \pmod 2$. Hence, we again have a cbm-avoiding colouring of $K_{2t, 2t}$, completing the proof. 
\end{proof}

\begin{figure}[h]
    \centering
    \begin{tikzpicture}[x=1cm,y=1cm]

  \newcommand{\celllabel}[2]{%
    \ifnum#1=1
      \ifnum#2=1 $c_{14}$\fi
      \ifnum#2=2 $c_{24}$\fi
      \ifnum#2=3 $c_{34}$\fi
      \ifnum#2=4 $c_{22}$\fi
    \fi
    \ifnum#1=2
      \ifnum#2=1 $c_{13}$\fi
      \ifnum#2=2 $c_{23}$\fi
      \ifnum#2=3 $c_{22}$\fi
      \ifnum#2=4 $c_{34}$\fi
    \fi
    \ifnum#1=3
      \ifnum#2=1 $c_{12}$\fi
      \ifnum#2=2 $c_{11}$\fi
      \ifnum#2=3 $c_{23}$\fi
      \ifnum#2=4 $c_{24}$\fi
    \fi
    \ifnum#1=4
      \ifnum#2=1 $c_{11}$\fi
      \ifnum#2=2 $c_{12}$\fi
      \ifnum#2=3 $c_{13}$\fi
      \ifnum#2=4 $c_{14}$\fi
    \fi
  }

  \newcommand{\cellcolor}[2]{%
    \ifnum#1=1
      \ifnum#2=1 myblue!35\fi
      \ifnum#2=2 myred!15\fi
      \ifnum#2=3 myyellow!15\fi
      \ifnum#2=4 mygreen!15\fi
    \fi
    \ifnum#1=2
      \ifnum#2=1 myred!40\fi
      \ifnum#2=2 myblue!15\fi
      \ifnum#2=3 mygreen!15\fi
      \ifnum#2=4 myyellow!15\fi
    \fi
    \ifnum#1=3
      \ifnum#2=1 myyellow!40\fi
      \ifnum#2=2 mygreen!35\fi
      \ifnum#2=3 myblue!15\fi
      \ifnum#2=4 myred!15\fi
    \fi
    \ifnum#1=4
      \ifnum#2=1 mygreen!35\fi
      \ifnum#2=2 myyellow!40\fi
      \ifnum#2=3 myred!40\fi
      \ifnum#2=4 myblue!35\fi
    \fi
  }

  \foreach \r in {1,2,3,4} {
    \foreach \c in {1,2,3,4} {
      \fill[\cellcolor{\r}{\c}] (\c-1,\r-1) rectangle (\c,\r);
    }
  }

  \draw (0,0) grid (4,4);

  \foreach \r in {1,2,3,4} {
    \foreach \c in {1,2,3,4} {
      \node at (\c-0.5,\r-0.5) {\celllabel{\r}{\c}};
    }
  }

  \foreach \c in {1,2,3,4} {
    \node[above] at (\c-0.5,4) {$A_{\c}$};
  }
  \foreach \r in {1,2,3,4} {
    \pgfmathtruncatemacro{\Bindex}{5-\r} %
    \node[left] at (0,\r-0.5) {$B_{\Bindex}$};
  }

\end{tikzpicture}
    \hspace{2cm}
    \begin{tikzpicture}[x=1cm,y=1cm]

  \newcommand{\celllabel}[2]{%
    \ifnum#1=1
      \ifnum#2=1 $1$\fi
      \ifnum#2=2 $2$\fi
      \ifnum#2=3 $3$\fi
      \ifnum#2=4 $4$\fi
    \fi
    \ifnum#1=2
      \ifnum#2=1 $4$\fi
      \ifnum#2=2 $1$\fi
      \ifnum#2=3 $2$\fi
      \ifnum#2=4 $3$\fi
    \fi
    \ifnum#1=3
      \ifnum#2=1 $3$\fi
      \ifnum#2=2 $4$\fi
      \ifnum#2=3 $1$\fi
      \ifnum#2=4 $2$\fi
    \fi
    \ifnum#1=4
      \ifnum#2=1 $2$\fi
      \ifnum#2=2 $3$\fi
      \ifnum#2=3 $4$\fi
      \ifnum#2=4 $1$\fi
    \fi
  }

  \newcommand{\cellcolor}[2]{%
    \ifnum#1=1
      \ifnum#2=1 myblue!25\fi
      \ifnum#2=2 mygreen!25\fi
      \ifnum#2=3 myyellow!25\fi
      \ifnum#2=4 myred!25\fi
    \fi
    \ifnum#1=2
      \ifnum#2=1 myred!25\fi
      \ifnum#2=2 myblue!25\fi
      \ifnum#2=3 mygreen!25\fi
      \ifnum#2=4 myyellow!25\fi
    \fi
    \ifnum#1=3
      \ifnum#2=1 myyellow!25\fi
      \ifnum#2=2 myred!25\fi
      \ifnum#2=3 myblue!25\fi
      \ifnum#2=4 mygreen!25\fi
    \fi
    \ifnum#1=4
      \ifnum#2=1 mygreen!25\fi
      \ifnum#2=2 myyellow!25\fi
      \ifnum#2=3 myred!25\fi
      \ifnum#2=4 myblue!25\fi
    \fi
  }

      \def\d{0.3}
    
      \def\yA{0}
      \def\yB{1+\d}   
      \def\yC{2+\d}
      \def\yD{3+\d}
      \def\yE{4}      


    \foreach \r/\yb/\yt in {1/\yA/\yB, 2/\yB/\yC, 3/\yC/\yD, 4/\yD/\yE}{
        \foreach \c in {1,2,3,4}{
            \fill[\cellcolor{\r}{\c}] (\c-1,\yb) rectangle (\c,\yt);
            \node at (\c-0.5, {\yb+0.5*(\yt-(\yb))}) {\celllabel{\r}{\c}};
        }
    }
    
  \foreach \x in {0,1,2,3,4}
    \draw (\x,\yA) -- (\x,\yE);

  \foreach \y in {\yA,\yB,\yC,\yD,\yE}
    \draw (0,\y) -- (4,\y);

  \foreach \c in {1,2,3,4} {
    \node[above] at (\c-0.5,4) {$A_{\c}$};
  }
  \foreach \r in {1,2,3,4} {
    \pgfmathtruncatemacro{\Bindex}{5-\r} %
    \node[left] at (0,\r-0.5) {$B_{\Bindex}$};
  }

\end{tikzpicture}
    \caption{Adjacency matrix representations of the constructions used in \cref{thm:lower-Knn} (left) with $k=8$, and \cref{thm:knn_lower2} (right) with $k=4$.}
    \label{fig:bipartite_constructions}
\end{figure}

We finish with our lower bound result for the complete graph. The following statement has already been demonstrated for $k=3$ and $t=1$ by Pardey and Rautenbach \cite{pardey2022}, via the $3$-edge-coloured $K_6$ shown in \autoref{fig:kn_counterexample_odd} (left).
\begin{theorem}\label{thm:lower_Kn}
    For any $k\geq 3$ and any $t\geq 1$, there exists a colour-balanced $k$-edge-colouring of $K_{2kt}$ with no colour-balanced perfect matching. 
\end{theorem}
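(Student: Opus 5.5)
The plan is to build the colouring around an invariant that every perfect matching must satisfy, and then arrange the colour classes so that a colour-balanced matching would violate it. This is the same strategy as in \cref{thm:knn_lower2}. The most robust invariant in a non-bipartite host is a parity of cuts. For any $S\subseteq V(K_{2kt})$ and any perfect matching $M$, every vertex of $S$ is covered exactly once, so
\[
    |M\cap E(S,S^c)| \equiv |S| \pmod 2 .
\]
So I will choose a set $S$ of odd size $s$ and a set $J$ of $j$ colours. The aim is that the edges coloured by $J$ are essentially the cut $E(S,S^c)$, with $jt$ of the opposite parity to $s$. A colour-balanced $M$ would use exactly $jt$ edges of colours in $J$, contradicting the invariant.

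In order, the steps are as follows. First, pick parameters with $s(2kt-s)$ close to $jt(2kt-1)$ and $s\not\equiv jt \pmod 2$. Second, colour $E(S,S^c)$ with the colours of $J$, and fill $E(S)\cup E(S^c)$ with the remaining $k-j$ colours. The discrepancy $s(2kt-s)-jt(2kt-1)$ must be absorbed by moving a small set $F$ of edges between the two groups of colours. Third, if exact equality is impossible, make the exchanged set $F$ rigid: for example, take $F$ to be all edges inside a small clique on vertex set $T$, or all edges between a small set $T$ and a part of the graph. Then $|M\cap F|$ is either determined or controlled mod $2$ by a second cut parity $|M\cap E(T,T^c)|\equiv |T|$. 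Combining the two parities then still yields a contradiction. Fourth, distribute the remaining colours inside $J$ and inside its complement arbitrarily, so that every colour class has exactly $t(2kt-1)$ edges. Colour balance only depends on class sizes, so this is easy once the unions have the right sizes.

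Small cases guide the parameter choice. For $k=3$, $t=1$, a colour class that is a triangle plus a few pendant edges, as in Pardey and Rautenbach's $K_6$, shows that a clique-plus-cut structure is the right shape. Accordingly, I would first try $J$ to consist of $3$ colours (this is where $k\ge 3$ is used) and take $S$ of size about $t$ or $3t$. I would let the union of these colours be $E(S,S^c)$ together with either all of $E(S)$ or a clique inside $S^c$, and then solve for the sizes. If two colours alone sufficed we would get a counterexample for $k=2$, which is known not to exist, so the argument must genuinely need at least three colours in $J$ or a mixed parity invariant.

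The main obstacle is the arithmetic in the first and third steps. We need edge counts of the form $s(2kt-s)+\binom{a}{2}$, or similar expressions, to equal exactly $jt(2kt-1)$ for all $k\ge 3$ and $t\ge1$, while keeping the total matching count of $J$ forced to the wrong parity. I expect to need a short case split on the parity of $t$ (and possibly of $k$). When $t$ is odd, the invariant should come directly from a single odd cut. When $t$ is even, I would add a second cut or a clique on an odd vertex set, chosen so that the forced parity flips.
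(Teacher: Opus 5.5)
Your overall strategy has the right shape: find an invariant that every perfect matching satisfies but a colour-balanced one would violate, then restore exact balance by a small local change. However, the invariant you chose cannot work, and the arithmetic you defer is impossible rather than merely hard.

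Your plan rests entirely on a statement of the form $|M\cap E'|\equiv c \pmod 2$ for every perfect matching $M$, where $E'=E_J$ is the union of the colour classes in $J$. In steps 1--2, $E_J$ is a cut. In step 3 it is a cut modified by a set $F$, and you need $|M\cap F|$ to be determined mod $2$, since otherwise nothing follows. For $F=E(T)$ it is not determined: $|M\cap E(T)|=(|T|-|M\cap E(T,T^c)|)/2$, which depends on $|M\cap E(T,T^c)|$ mod $4$, not mod $2$.

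No invariant of this form can contradict colour balance. Sum over all $(2kt-1)!!$ perfect matchings of $K_{2kt}$; each edge lies in $(2kt-3)!!$ of them, so
\[
c\cdot (2kt-1)!! \equiv \sum_{M}|M\cap E'| = |E'|\cdot(2kt-3)!! \pmod 2 .
\]
Both double factorials are odd, so $c\equiv |E'| \pmod 2$. For $E'=E_J$ this gives $c\equiv jt(2kt-1)\equiv jt$. That is exactly the parity of $|M\cap E_J|=jt$ in a colour-balanced matching.

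Your step 1 is a special case of this. Since $s(2kt-s)\equiv s \pmod 2$, the equation $s(2kt-s)=jt(2kt-1)$ forces $s\equiv jt$. No choice of $F$, second cut, combination of parities, or case split on the parities of $t$ and $k$ escapes the identity above, because each ends in a constant parity for some union of colour classes. This is also why your plan never found a real use for $k\ge 3$.

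The missing idea is an invariant modulo $k$ rather than modulo $2$. The paper partitions the vertex set into $V_1,\dots,V_k$ with $|V_i|=2t+\Delta_i$ and $\sum_i\Delta_i=0$, and colours $E(V_i,V_j)$ by $i+j \pmod k$. Since each vertex is covered exactly once, every perfect matching satisfies $\sum_{e\in M}c(e)\equiv\sum_i i\Delta_i \pmod k$. A colour-balanced matching instead has sum $t\sum_i i\equiv 0$ or $k/2 \pmod k$.

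This colouring is only nearly balanced. It is repaired by recolouring one edge (odd $k$, with $\Delta_1=-1$ and $\Delta_k=1$) or a few edges at a single vertex (even $k$). A matching meets the recoloured set in at most one edge, so the invariant shifts by a residue from a small set such as $\{0,1\}$ or $\{0,-1,-3\}$. That set still misses the target residue precisely because $k\ge 3$. Modulo $2$, a one-edge perturbation already covers every residue, which is another way to see why a parity scheme has no room.
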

\begin{proof}
    We prove the theorem in two parts, first for odd values of $k \geq 3$ and then for even values of $k \geq 3$. In both cases we will construct a colour-balanced cbm-avoiding colouring of $K_{2kt}$ by first defining an auxiliary cbm-avoiding colouring on a certain blow-up of $K_k$, and then making a small local modification to this colouring to make it colour-balanced. Examples of the resulting colourings are given in \cref{fig:kn_counterexample_odd} for odd $k$ and \cref{fig:kn_counterexample_even} for even $k$. We begin by defining some notation that will be useful in each of these constructions.

    The auxiliary colouring will always have the following structure. Partition the vertex set of $K_{2kt}$ into parts labelled $V_1,\dots,V_k$. For any integers $\Delta_1,\dots,\Delta_k$ satisfying $\sum_{i \in [k]} \Delta_i = 0$, let $c = c(\Delta_1,\dots,\Delta_k)$ be the edge-colouring of $K_{2kt}$ from palette $[k]$ where we let $|V_i| = 2t + \Delta_i$ for each $i \in [k]$, and for every (not necessarily distinct) $i$ and $j$, we colour every edge $e \in E(V_i, V_j)$ by $i + j \pmod k$. Since a perfect matching $M$ matches every vertex of $K_{2kt}$ once, it follows that
    \begin{equation}\label{eq:matching-mod-invariant}
        \sum_{e\in M}c(e) = 2t\sum_{i=1}^k i + \sum_{i=1}^k i \cdot \Delta_i \equiv \sum_{i=1}^{k}i\cdot \Delta_i \pmod{k}.
    \end{equation}
    However, in a colour-balanced perfect matching $M$, each colour appears exactly $t$ times, and so $\sum_{e\in M}c(e)=t\sum_{i=1}^k i$, which is either $0$ or $k/2 \pmod k$, depending on the parity of $k$ and $t$. Finally, let $m = \frac{1}{k}{2kt \choose 2}$ denote the size of each colour class in a colour-balanced colouring of $K_{2kt}$. 
    \begin{claim}\label{claim:counter-k-odd} 
        Let $k \geq 3$ be odd. For all $t \geq 1$ there exists a colour-balanced $k$-edge-colouring of $K_{2kt}$ with no colour-balanced perfect matching.
    \end{claim}
    \begin{poc}
         Consider the auxiliary colouring $c = c(\Delta_1,\dots,\Delta_k)$ with $\Delta_1 = -1$, $\Delta_k = 1$ and $\Delta_i = 0$ for all $1 < i < k$. It is straightforward to verify that  $c$ assigns $m$ edges to every colour class except for colours $1$ and $2$, which contain $m-1$ and $m+1$ edges respectively. Construct a colour-balanced $k$-edge-colouring of $K_{2kt}$ by changing the colour of a single edge $e$ in $E(V_2, V_k)$ from $2$ to $1$. Suppose for a contradiction that this colouring of $K_{2kt}$ admits a colour-balanced perfect matching $M$, and let $\delta\in\{0,1\}$ be the indicator of whether $e \in M$. By recolouring $e$ back from $1$ to $2$, we see that in the auxiliary colouring, $M$ satisfied $\sum_{e\in M}c(e)=\delta+t\sum_{i=1}^k i\equiv \delta \pmod{k}$. However, \eqref{eq:matching-mod-invariant} implies that this sum should be congruent to $-1 \pmod{k}$, so this is a contradiction.
    \end{poc}
    \begin{claim}\label{claim:counter-k-even} 
        Let $k \geq 3$ be even. For all $t \geq 1$ there exists a colour-balanced $k$-edge-colouring of $K_{2kt}$ with no colour-balanced perfect matching.
    \end{claim}
    \begin{poc}
        We start with the case where $t$ is odd. Consider the auxiliary colouring ${c_1 = c(\Delta_1,\dots,\Delta_k)}$ with $\Delta_i = 0$ for all $i \in [k]$. Noting that $k$ is even, it is straightforward to verify that $c_1$ colours $m+t$ edges with every odd colour, and $m-t$ edges with every even colour.
        Now fix a vertex $v\in V_k$, and for each odd $i\in [k]$, recolour $t$ edges in $E(\{v\},V_i)$ from $i$ to $i+1$, constructing a colour-balanced $k$-edge-colouring of $K_{2kt}$. Suppose for contradiction that this edge-colouring of $K_{2kt}$ admits a colour-balanced perfect matching $M$, and let $\delta\in\{0,1\}$ be the indicator of whether $v$ is matched by one of the recoloured edges. Then in $c_1$, $M$ satisfied $\sum_{e\in M}c_1(e)=-\delta+t\sum_{i=1}^k i \equiv \frac{k}{2}-\delta\not\equiv 0\pmod{k}$, contradicting \eqref{eq:matching-mod-invariant}. 

        When $t$ is even, consider the auxiliary colouring $c_2=c(\Delta_1, \dots, \Delta_k)$ with $\Delta_i = 0$ for all ${i \in [k] \setminus \{2,k\}}$ and $\Delta_2 = 1$, $\Delta_k = -1$. Observe that $c_2$ colours $m+t$ edges with every odd colour, and $m-t$ edges with every even colour except for colours $2$ and $k$, which label $m-t-1$ and $m-t+1$ edges, respectively. For each odd $i<k-1$, recolour $t$ edges in $E(\{v\}, V_{i})$ from $i$ to $i+1$, recolour $t-1$ edges in $E(\{v\}, V_{k-1})$ from $k-1$ to $k$ and recolour one additional edge in $E(\{v\}, V_{k-1})$ from $k-1$ to $2$. Since at most one recoloured edge is contained in a colour-balanced matching $M$, we have $\sum_{e\in M}c_2(e) \in \{0,-1,-3\} \pmod{k}$, and none of these residues is congruent to $2\pmod{k}$, contradicting \eqref{eq:matching-mod-invariant}.
    \end{poc}\qedhere
\end{proof}

\begin{figure}[h]
    \centering
    \raisebox{0.25cm}{\begin{tikzpicture}
    \def\c{2}
    \def\r{0.6}

    \begin{scope}[xshift=-0.866*\c cm, yshift=0.5*\c cm]
        \node[mycircle] (1a) at (0,0){};
        \node at (-0.5*\r,0.5*\r) {$V_1$};
    \end{scope}
    
    \begin{scope}[xshift=0.866*\c cm, yshift=0.5*\c cm]
        \foreach \x/\y in {0/1b,180/2b}{
            \node[mycircle] (\y) at (canvas polar cs: radius=\r cm,angle=\x-45){};
            \node at (0.5*\r,0.5*\r) {$V_2$};
        }
    \end{scope}
    
    \begin{scope}[yshift=-\c cm]
        \foreach \x/\y in {0/1c,120/2c,240/3c}{
            \node[mycircle] (\y) at (canvas polar cs: radius=\r cm,angle=\x+60){};
            \node at (0,-1.2*\r) {$V_3$};
        }
    \end{scope}

    \foreach \x in {1}{
        \foreach \y in {1,2}{
            \draw (\x a) edge[color=mygreen] (\y b);
        }
    }

    \foreach \x in {1,2}{
        \foreach \y in {1,...,3}{
            \draw (\x b) edge[color=myred] (\y c);
        }
    }

    \foreach \y in {1,2,3}{
        \draw (1a) edge[color=myblue] (\y c);
    }

    \draw (1b) edge[color=myblue] (2b);
    
    \draw (1c) edge[color=mygreen] (2c);
    \draw (2c) edge[color=mygreen] (3c);
    \draw (3c) edge[color=mygreen] (1c);

    \draw (1b) edge[color=myblue] (3c);

    \begin{scope}[xshift=-0.866*\c cm, yshift=0.5*\c cm]
        \node[mycircle] (1a) at (0,0){};
    \end{scope}
    
    \begin{scope}[xshift=0.866*\c cm, yshift=0.5*\c cm]
        \foreach \x/\y in {0/1b,180/2b}{
            \node[mycircle] (\y) at (canvas polar cs: radius=\r cm,angle=\x-45){};
        }
    \end{scope}
    
    \begin{scope}[yshift=-\c cm]
        \foreach \x/\y in {0/1c,120/2c,240/3c}{
            \node[mycircle] (\y) at (canvas polar cs: radius=\r cm,angle=\x+60){};
        }
    \end{scope}
    
\end{tikzpicture}}
    \hspace{2cm}
    \begin{tikzpicture}
    \def\c{2}
    \def\r{0.8}
    \begin{scope}[xshift=-0.866*\c cm, yshift=0.5*\c cm]
        \foreach \x/\y in {0/1a,120/2a,240/3a}{
            \node[mycircle] (\y) at (canvas polar cs: radius=\r cm,angle=\x){};
            \node at (-\r,\r) {$V_1$};
        }
    \end{scope}
    
    \begin{scope}[xshift=0.866*\c cm, yshift=0.5*\c cm]
        \foreach \x/\y in {0/1b,90/2b,180/3b,270/4b}{
            \node[mycircle] (\y) at (canvas polar cs: radius=\r cm,angle=\x+90){};
            \node at (\r,\r) {$V_2$};
        }
    \end{scope}
    
    \begin{scope}[yshift=-\c cm]
        \foreach \x/\y in {0/1c,72/2c,144/3c,216/4c, 288/5c}{
            \node[mycircle] (\y) at (canvas polar cs: radius=\r cm,angle=\x+90){};
            \node at (0,-1.2*\r) {$V_3$};
        }
    \end{scope}

    \foreach \x in {1,...,3}{
        \foreach \y in {1,...,4}{
            \draw (\x a) edge[color=mygreen, line width=0.75, opacity=0.6] (\y b);
        }
    }

    \foreach \x in {1,...,4}{
        \foreach \y in {1,...,5}{
            \draw (\x b) edge[color=myred,  line width=0.75, opacity=0.6] (\y c);
        }
    }

    \foreach \x in {1,...,3}{
        \foreach \y in {1,...,5}{
            \draw (\x a) edge[color=myblue, line width=0.75, opacity=0.6] (\y c);
        }
    }

    \foreach \x in {1,2,3}{
        \foreach \y in {\x,...,3}{
            \ifnum\x<\y \draw (\x a) edge[color=myred] (\y a); \fi
        }
    }
    \foreach \x in {1,...,4}{
        \foreach \y in {\x,...,4}{
            \ifnum\x<\y \draw (\x b) edge[color=myblue] (\y b); \fi
        }
    }
    \foreach \x in {1,...,5}{
        \foreach \y in {\x,...,5}{
            \ifnum\x<\y \draw (\x c) edge[color=mygreen] (\y c); \fi
        }
    }

    \draw (4b) edge[color=myblue] (4c);

    \begin{scope}[xshift=-0.866*\c cm, yshift=0.5*\c cm]
        \foreach \x/\y in {0/1a,120/2a,240/3a}{
            \node[mycircle] (\y) at (canvas polar cs: radius=\r cm,angle=\x){};
        }
    \end{scope}
    
    \begin{scope}[xshift=0.866*\c cm, yshift=0.5*\c cm]
        \foreach \x/\y in {0/1b,90/2b,180/3b,270/4b}{
            \node[mycircle] (\y) at (canvas polar cs: radius=\r cm,angle=\x+90){};
        }
    \end{scope}
    
    \begin{scope}[yshift=-\c cm]
        \foreach \x/\y in {0/1c,72/2c,144/3c,216/4c, 288/5c}{
            \node[mycircle] (\y) at (canvas polar cs: radius=\r cm,angle=\x+90){};
        }
    \end{scope}
    
\end{tikzpicture}
    \caption{Counterexamples for $k=3$ with $t=1$ (left) and $t=2$ (right). When $k=3$ and $t=1$ the construction gives a graph isomorphic to the counterexample of Pardey and Rautenbach~\cite{pardey2022}.}
    \label{fig:kn_counterexample_odd}
\end{figure}
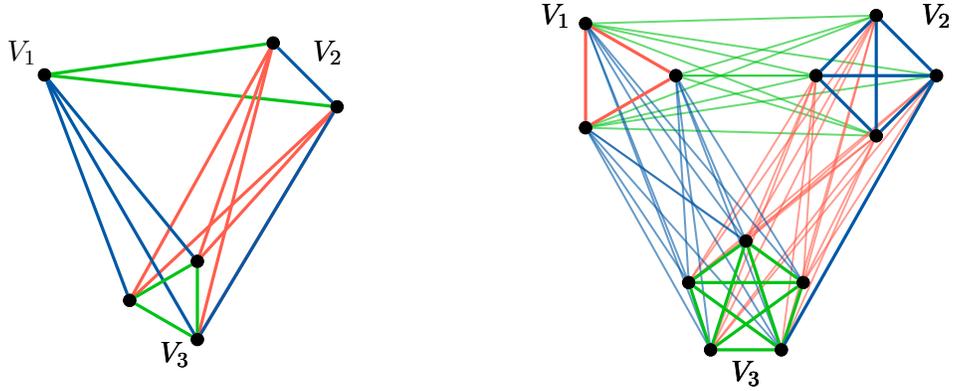

\begin{figure}[h]
    \centering
    \raisebox{0.5cm}{\begin{tikzpicture}
    \def\c{2}
    \def\r{0.6}
    \begin{scope}[xshift=-\c cm, yshift=\c cm]
        \foreach \x/\y in {0/1a,180/2a}{
            \node[mycircle] (\y) at (canvas polar cs: radius=\r cm,angle=\x+45){};
            \node at (-0.5*\r,0.5*\r) {$V_1$};
        }
    \end{scope}
    
    \begin{scope}[xshift=\c cm, yshift=\c cm]
        \foreach \x/\y in {0/1b,180/2b}{
            \node[mycircle] (\y) at (canvas polar cs: radius=\r cm,angle=\x-45){};
            \node at (0.5*\r,0.5*\r) {$V_2$};
        }
    \end{scope}
    
    \begin{scope}[xshift=\c cm, yshift=-\c cm]
        \foreach \x/\y in {0/1c,180/2c}{
            \node[mycircle] (\y) at (canvas polar cs: radius=\r cm,angle=\x+45){};
            \node at (0.5*\r,-0.5*\r) {$V_3$};
        }
    \end{scope}

    \begin{scope}[xshift=-\c cm, yshift=-\c cm]
        \foreach \x/\y in {0/1d,180/2d}{
            \node[mycircle] (\y) at (canvas polar cs: radius=\r cm,angle=\x-45){};
            \node at (-0.5*\r,-0.5*\r) {$V_4$};
        }
    \end{scope}

    \foreach \x in {1,2}{
        \foreach \y in {1,2}{
            \draw (\x a) edge[color=mygreen, line width=0.7, opacity=0.6] (\y b);
        }
    }
    \foreach \x in {1,2}{
        \foreach \y in {1,2}{
            \draw (\x c) edge[color=mygreen, line width=0.7, opacity=0.6] (\y d);
        }
    }

    \foreach \x in {1,2}{
        \foreach \y in {1,2}{
            \draw (\x b) edge[color=myblue,  line width=0.7, opacity=0.6] (\y c);
        }
    }
    \foreach \x in {1,2}{
        \foreach \y in {1,2}{
            \draw (\x a) edge[color=myblue,  line width=0.7, opacity=0.6] (\y d);
        }
    }

    \foreach \x in {1,2}{
        \foreach \y in {1,2}{
            \draw (\x a) edge[color=violet, line width=0.7, opacity=0.6] (\y c); 
        }
    }

    \foreach \x in {1,2}{
        \foreach \y in {1,2}{
            \draw (\x b) edge[color=myred, line width=0.75, opacity=0.6] (\y d);
        }
    }

    \foreach \x in {1,2}{
        \foreach \y in {\x,2}{
            \ifnum\x<\y \draw (\x a) edge[color=myred] (\y a); \fi
            \ifnum\x<\y \draw (\x c) edge[color=myred] (\y c); \fi
        }
    }
    \foreach \x in {1,2}{
        \foreach \y in {\x,2}{
            \ifnum\x<\y \draw (\x b) edge[color=violet!75!white] (\y b); \fi
        }
    }
    \foreach \x in {1,2}{
        \foreach \y in {\x,2}{
            \ifnum\x<\y \draw (\x d) edge[color=violet!75!white] (\y d); \fi
        }
    }

    \draw (2d) edge[color=myred] (2a);

    \draw (2d) edge[color=violet!75!white] (1c);

    \begin{scope}[xshift=-\c cm, yshift=\c cm]
        \foreach \x/\y in {0/1a,180/2a}{
            \node[mycircle] (\y) at (canvas polar cs: radius=\r cm,angle=\x+45){};
        }
    \end{scope}
    
    \begin{scope}[xshift=\c cm, yshift=\c cm]
        \foreach \x/\y in {0/1b,180/2b}{
            \node[mycircle] (\y) at (canvas polar cs: radius=\r cm,angle=\x-45){};
        }
    \end{scope}
    
    \begin{scope}[xshift=\c cm, yshift=-\c cm]
        \foreach \x/\y in {0/1c,180/2c}{
            \node[mycircle] (\y) at (canvas polar cs: radius=\r cm,angle=\x+45){};
        }
    \end{scope}

    \begin{scope}[xshift=-\c cm, yshift=-\c cm]
        \foreach \x/\y in {0/1d,180/2d}{
            \node[mycircle] (\y) at (canvas polar cs: radius=\r cm,angle=\x-45){};
        }
    \end{scope}
\end{tikzpicture}}
    \hspace{2cm}
    \begin{tikzpicture}
    \def\c{2}
    \def\r{0.8}
    \begin{scope}[xshift=-\c cm, yshift=\c cm]
        \foreach \x/\y in {0/1a,90/2a,180/3a,270/4a}{
            \node[mycircle] (\y) at (canvas polar cs: radius=\r cm,angle=\x){};
            \node at (-\r,\r) {$V_1$};
        }
    \end{scope}
    
    \begin{scope}[xshift=\c cm, yshift=\c cm]
        \foreach \x/\y in {0/1b,72/2b,144/3b,216/4b, 288/5b}{
            \node[mycircle] (\y) at (canvas polar cs: radius=\r cm,angle=\x+90){};
            \node at (\r,\r) {$V_2$};
        }
    \end{scope}
    
    \begin{scope}[xshift=\c cm, yshift=-\c cm]
        \foreach \x/\y in {0/1c,90/2c,180/3c,270/4c}{
            \node[mycircle] (\y) at (canvas polar cs: radius=\r cm,angle=\x){};
            \node at (\r,-\r) {$V_3$};
        }
    \end{scope}

    \begin{scope}[xshift=-\c cm, yshift=-\c cm]
        \foreach \x/\y in {0/1d,120/2d,240/3d}{
            \node[mycircle] (\y) at (canvas polar cs: radius=\r cm,angle=\x+90){};
            \node at (-\r,-\r) {$V_4$};
        }
    \end{scope}

    \foreach \x in {1,...,4}{
        \foreach \y in {1,...,5}{
            \draw (\x a) edge[color=mygreen, line width=0.7, opacity=0.35] (\y b);
        }
    }
    \foreach \x in {1,...,4}{
        \foreach \y in {1,...,3}{
            \draw (\x c) edge[color=mygreen, line width=0.7, opacity=0.35] (\y d);
        }
    }

    \foreach \x in {1,...,5}{
        \foreach \y in {1,...,4}{
            \draw (\x b) edge[color=myblue,  line width=0.7, opacity=0.35] (\y c);
        }
    }
    \foreach \x in {1,...,4}{
        \foreach \y in {1,...,3}{
            \draw (\x a) edge[color=myblue,  line width=0.7, opacity=0.35] (\y d);
        }
    }

    \foreach \x in {1,...,4}{
        \foreach \y in {1,...,4}{
        \ifnum\x=\y
        \else
            \draw (\x a) edge[color=violet, line width=0.7, opacity=0.35] (\y c);
        \fi   
        }
    }

    \foreach \x in {1,...,5}{
        \foreach \y in {1,...,3}{
            \draw (\x b) edge[color=myred, line width=0.75, opacity=0.35] (\y d);
        }
    }

    \foreach \x in {1,...,4}{
        \foreach \y in {\x,...,4}{
            \ifnum\x<\y \draw (\x a) edge[color=myred] (\y a); \fi
            \ifnum\x<\y \draw (\x c) edge[color=myred] (\y c); \fi
        }
    }
    \foreach \x in {1,...,5}{
        \foreach \y in {\x,...,5}{
            \ifnum\x<\y \draw (\x b) edge[color=violet!75!white] (\y b); \fi
        }
    }
    \foreach \x in {1,...,3}{
        \foreach \y in {\x,...,3}{
            \ifnum\x<\y \draw (\x d) edge[color=violet!75!white] (\y d); \fi
        }
    }

    \draw (1d) edge[color=myred] (3a);
    \draw (1d) edge[color=myred] (4a);

    \draw (1d) edge[color=violet!75!white] (3c);
    \draw (1d) edge[color=myred] (4c);

    \begin{scope}[xshift=-\c cm, yshift=\c cm]
        \foreach \x/\y in {0/1a,90/2a,180/3a,270/4a}{
            \node[mycircle] (\y) at (canvas polar cs: radius=\r cm,angle=\x){};
        }
    \end{scope}
    
    \begin{scope}[xshift=\c cm, yshift=\c cm]
        \foreach \x/\y in {0/1b,72/2b,144/3b,216/4b, 288/5b}{
            \node[mycircle] (\y) at (canvas polar cs: radius=\r cm,angle=\x+90){};
        }
    \end{scope}
    
   \begin{scope}[xshift=\c cm, yshift=-\c cm]
        \foreach \x/\y in {0/1c,90/2c,180/3c,270/4c}{
            \node[mycircle] (\y) at (canvas polar cs: radius=\r cm,angle=\x){};
        }
    \end{scope}

    \begin{scope}[xshift=-\c cm, yshift=-\c cm]
        \foreach \x/\y in {0/1d,120/2d,240/3d}{
            \node[mycircle] (\y) at (canvas polar cs: radius=\r cm,angle=\x+90){};
        }
    \end{scope}
\end{tikzpicture}
    \caption{Counterexamples for $k=4$ with $t=1$ (left) and $t=2$ (right).}
    \label{fig:kn_counterexample_even}
\end{figure}
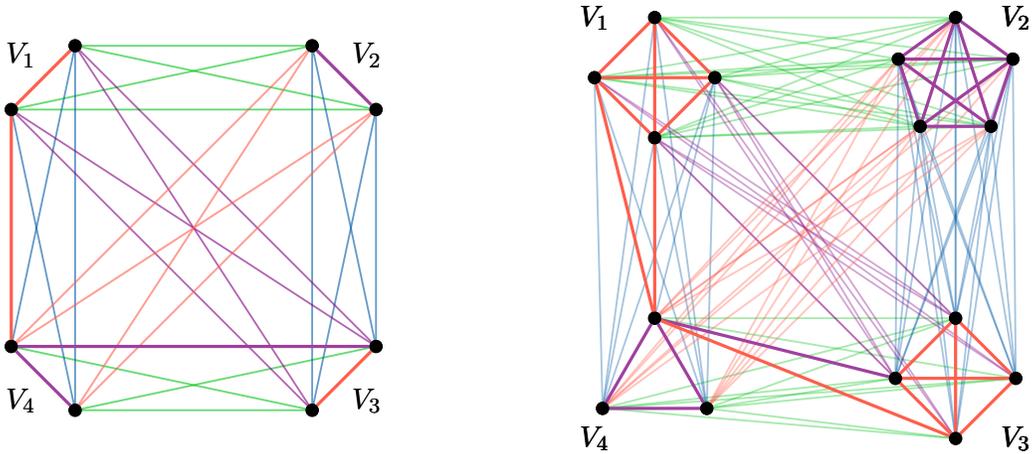

\section{Open Problems}\label{sec:open_problems}
There are several interesting directions for further research. There is a gap between the $\bigO(k^2)$ bound obtained for representative matchings in bipartite graphs in \cref{thm:vec-bipartite}, and the worst-case lower bound of $\sqrt{k/2}$ constructed for colour-balanced matchings in \cref{thm:lower-Knn}. Our use of \cref{thm:vec-bipartite} to prove each of our other results for representative subgraphs implies that any improvement to the general $k^2$ bound in \cref{thm:vec-bipartite} immediately gives an equal improvement to the $k^2$ factor in the bounds of each of our other results. We would therefore be very interested to know the true bound. In the specific case of colour-balanced matchings,  we conjecture that our lower bound is optimal.
\begin{conjecture}
    For all positive integers $k$ and $t$, every colour-balanced $k$-edge-coloured $K_{kt,kt}$ admits a perfect matching $M$ satisfying $$f_c(M) = \bigO(\sqrt{k}).$$
\end{conjecture}
There is also a gap between the $\bigO(k^2)$ upper bound obtained in \cref{thm:complete_colours} and the constant lower bound constructed in \cref{thm:lower_Kn} for matchings in complete graphs. We again conjecture that the lower bound is correct. If this is true, then our $\sqrt{k}$ lower bound for perfect matchings of $K_{kt,kt}$ implies that new ideas will be necessary to prove it.
\begin{conjecture}
    There exists some absolute constant $C$ such that for all positive integers $k$ and $t$, every colour-balanced $k$-edge-colouring of $K_{2kt}$ admits a perfect matching $M$ satisfying $f_c(M) < C$.
\end{conjecture}
Improvements via other means to our remaining error bounds would also be of interest. We highlight two cases in particular. First, when $k=2$, error-bounds that are sharp up to an additive constant are known for embeddings of fixed spanning forests of maximum degree $\Delta$ (see \cite{hollom2024}). While we have shown that for $k > 2$ the error remains linear in $\Delta$, we have made no effort to optimise the coefficient.
\begin{problem}
    Improve the bounds on $f_\vecf(F)$ for bounded-degree spanning forests $F$ in complete graphs when $k \geq 3$.
\end{problem}
Second, since the maximum degree of a Hamilton cycle is $2$, our general upper bound in \cref{thm:bounded_deg_bound} implies that the error for embedding a representative Hamilton cycle in a complete graph is $\bigO(k^2)$. Our methods imply that this bound also holds for Hamilton cycles in complete bipartite graphs. Suppose we know an almost representative Hamilton cycle exists with error $\varepsilon$. Using a similar splitting strategy to the one applied in \cref{sec:knn_section}, we can apply Alon's necklace theorem to obtain an almost representative perfect matching with error $\varepsilon + \bigO(k)$. It follows that any improvement to the $k^2$ upper bound for Hamilton cycles yields an improvement to the corresponding upper bound for perfect matchings.
\begin{problem}
    Improve the bounds on $f_\vecf(H)$ for a Hamilton cycle $H$ in complete or complete bipartite graphs.
\end{problem}

Finally, we note that the matching problem in the bipartite setting is of independent interest. The question relates closely to recent work on finding rainbow matchings in colour-balanced $n$-edge-coloured complete bipartite graphs. When $K_{n,n}$ is $n$-edge-coloured by some colour-balanced $c$, a colour-balanced perfect matching of $K_{n,n}$ is precisely a rainbow perfect matching. When $c$ is a proper $n$-edge-colouring, this is precisely the problem of finding a Latin square transversal of size $n$, and when $c$ is a non-proper $n$-edge-colouring, this is the problem of finding an equi-$n$-square transversal of size $n$. Asymptotically sharp bounds for both problems are known (see Montgomery~\cite{montgomery2023proof} and Chakraborti, Christoph, Hunter, Montgomery and Petrov~\cite{chakraborti2024} for the most up-to-date results). There are many other interesting questions in these settings that extend naturally to our $k$-edge-coloured context. 

\subsection*{Acknowledgements}
The authors thank Leslie Goldberg, Lukas Michel and Youri Tamitegama for many helpful conversations. Thanks also to V\'{a}clav Rozho\v{n} for pointing out an error in an earlier version of this paper.
\printbibliography
\end{document}